\numberwithin{equation}{section}
\def\cH{{\mathcal H}}
\def\cJ{{\mathcal J}}
\def\cR{{\mathcal R}}
\def\cS{{\mathcal S}}
\def\cT{{\mathcal T}}
\def\cX{{\mathcal X}}
\def\cY{{\mathcal Y}}
\def\E{\mathbb{E}}
\def\N{\mathbb{N}}
\def\P{\mathbb{P}}
\def\R{\mathbb{R}}
\def\Z{\mathbb{Z}}
\def\sA{\mathscr{A}}
\def\sH{\mathscr{H}}
\def\sL{\mathscr{L}}
\def\sX{\mathscr{X}}
\def\dom{\mathrm{dom}}
\def\Lip{\mathrm{Lip}}
\renewcommand{\d}{\mathrm{d}}
\newtheorem*{Def*}{Definition}
\newtheorem*{Thm*}{Theorem}
\newtheorem*{Cor*}{Corollary}
\newtheorem*{Rmk*}{Remark}
\newtheorem*{Lem*}{Lemma}
\newtheorem*{Prop*}{Proposition}
\newtheorem*{Asm*}{Assumption}
\newtheorem{Def}{Definition}[section]
\newtheorem{Thm}[Def]{Theorem}
\newtheorem{Cor}[Def]{Corollary}
\newtheorem{Rmk}[Def]{Remark}
\newtheorem{Lem}[Def]{Lemma}
\newtheorem{Prop}[Def]{Proposition}
\newtheorem{Asm}[Def]{Assumption}
\newtheorem{Ex}[Def]{Example}
\def\be{\begin{equation}}
\def\ee{\end{equation}}
\begin{document}
\title{Markov Perfect Equilibria in \\ Discrete Finite-Player and Mean-Field Games\footnote{Research of the first two authors is partially supported 
by the National Science Foundation grant DMS 2406762, and
research of the third author is partially supported by the Simons Foundation Award \#949877. }}

\author{Felix H\"{o}fer\footnote{Department of Operations Research and Financial Engineering, Princeton University, Princeton, NJ 08540, USA}
\and H. Mete Soner\footnotemark[2]
\and Atilla Y\i lmaz\footnote{Department of Mathematics, Temple University, Philadelphia, PA 19122, USA}
}

\date{}

\maketitle

\begin{abstract}
\noindent
We study dynamic finite-player and mean-field  stochastic games within the framework of Markov perfect equilibria (MPE). Our focus is on discrete time and space structures without monotonicity. Unlike their continuous-time analogues, discrete-time finite-player games generally do not admit unique MPE. However, we show that uniqueness is remarkably recovered when the time steps are sufficiently small, and we provide examples  demonstrating the necessity of this assumption. This result, established without relying on any monotonicity conditions, underscores the importance of inertia in dynamic games. In both the finite-player and mean-field settings, we show that MPE correspond to solutions of the Nash-Lasry-Lions equation, which is known as the master equation in the mean-field case. We exploit this connection to establish the convergence of discrete-time finite-player games to their mean-field counterpart in short time. Finally, we prove the convergence of finite-player games to their continuous-time version on every time horizon. 
\end{abstract}


\section{Introduction}

Mean-field games (MFGs) were independently introduced by Lasry and Lions \cite{lasry_jeux_2006-1, lasry_jeux_2006, lasry_mean_2007} and Huang, Caines and Malhamé \cite{huang_individual_2003,huang_invariance_2007, huang_large-population_2007, huang_nash_2007} as a framework to approximate large symmetric games in which the interaction is through the empirical distribution of agents. Since their introduction, they have been widely applied, ranging from heterogeneous agent models in economics to models of jet-lag recovery in neuroscience, see for example \cite{carmona2020applications,carmona2018probabilistic,carmona2020jet}.

In recent years, considerable effort has been devoted to identifying conditions that ensure uniqueness of mean-field game equilibria beyond the Lasry-Lions monotone regime, as well as to developing selection criteria in cases where uniqueness fails \cite{bardi2019non,cecchin2019convergence,delarue2019restoring,hajek2019non,bayraktar2020non,delarue2020selection,dianetti2021submodular,cecchin2022selection,carmona2023synchronization,graber2023monotonicity, hofer2025synchronization}. The starting point of our present investigation is the observation by Gomes, Mohr and Souza \cite{GMS1} that continuous-time, finite-state games with a \emph{finite number} of players admit unique Markov perfect equilibria (MPE)\footnote{Markov perfect equilibria are sometimes referred to as \emph{Markovian Nash equilibria} (e.g., by Carmona and Delarue \cite{carmona2018probabilistic}) or \emph{feedback equilibria} (e.g., by Ba\c{s}ar and Olsder \cite{BO}). The term \emph{Markov perfect equilibria} (see Fudenberg and Tirole \cite{fudenberg1991game}) has become classical in the theory of stochastic games, and we adopt this convention.}, while open-loop equilibria are generally not unique. This result holds under mild regularity conditions and is established by characterizing MPE as solutions to a system of ordinary differential equations, with uniqueness guaranteed by the classical Lipschitz theory. 

In this work, we are interested in the existence and uniqueness of MPE in discrete-time, finite-state games, both in the finite-player and mean-field settings. Stochastic finite-player games in discrete time are classical in game theory \cite{BO}, and starting from Gomes, Mohr and Souza \cite{GMS2}, the mean-field version has been extensively studied theoretically \cite{SBR,doncel2019discrete,bonnans2023discrete}, in the context of learning procedures \cite{yang2017learning, guo2019learning, perrin2021mean} and as approximations for continuous-time MFGs \cite{achdou2013mean, hadikhanloo2019finite, garcia2025mean}. Related solution concepts for approximating Markov perfect equilibria in games with a large number of players have been proposed in economic theory, see Adlakha and Johari \cite{adlakha2013mean}, Weintraub, Benkard and Van Roy \cite{weintraub2008markov} and the references therein. In the finite-player (resp.\ mean-field) case, we characterize symmetric MPE by an equation which we call the \emph{finite-player (resp.\ mean-field) Nash-Lasry-Lions equation (NLL)}. In the mean-field limit, this equation is known as the \emph{master equation}. While classical and various weak solutions to the NLL equation in the continuous-time setting have been studied extensively, see for example \cite{cardaliaguet2019master,  cecchin2022weak,chassagneux2022probabilistic, gangbo2022global,cardaliaguet2022monotone, mou2024mean, mou2024minimal}, NLL equations in discrete time have received less attention. 

In the finite-player problem, we show that while solutions to the finite-player NLL equation are generally not unique, uniqueness is remarkably restored for all sufficiently small time steps without
relying on any monotonicity conditions. Intuitively, the smallness of the time step renders certain transitions suboptimal, thereby underscoring the importance of inertia in dynamic games.
Additionally, as the time step goes to zero, we prove that the solution of the discrete-time $N$-NLL equation converges to its continuous-time counterpart. Convergence of time discretizations in the mean-field problem have been well-studied \cite{achdou2013mean, hadikhanloo2019finite, dianetti2024pasting, garcia2025mean}. 

In the mean-field setting, MPE correspond to solutions of the mean-field Nash-Lasry-Lions equation. This equation is well-defined as it does not involve derivatives, and any solution induces a solution to the mean-field game for any initial condition. We present an explicit example of a one-step game where uniqueness is lost when the time step is large, and we establish existence and uniqueness in a general game when the time horizon and time step are sufficiently small. In continuous time, it is well-known that the NLL equation may develop singularities in finite time, see \cite{cecchin2019convergence, bayraktar2020non} for explicit examples, and, hence, classical solutions only exist in short time in the absence of monotonicity conditions \cite{GMS1, gangbo2015existence, cardaliaguet2022splitting, ambrose2023well}. Our result can be seen as the discrete-time analogue of these facts. Additionally, in the short-time regime, we prove that solutions to the finite-player NLL equation converge to their mean-field counterpart in discrete time---analogous to established results in the continuous-time setting, see for example \cite{GMS1}. 

To summarize, our main contributions are threefold. First, we clarify the relationship between symmetric Markov perfect equilibria and the Nash-Lasry-Lions equation in both the finite-player and the mean-field problem. Second, we study existence and uniqueness of MPE in the absence of monotonicity assumptions. While MPE in finite-player problems exist under mild regularity conditions, they become unique for sufficiently small time steps. In this case, our results contribute to the classical theory of dynamic finite-player games, see Ba\c{s}ar and Olsder \cite{BO}. In the mean-field problem, we prove existence and uniqueness of mean-field MPE for short time horizons. Finally, we establish two convergence results: (1) the convergence of the discrete-time finite-player MPE to its continuous-time counterpart as the time discretizations go to zero; and (2) for small time steps and short time horizons, the convergence of finite-player MPE to mean-field MPE.

We continue to define the problem and to present our results, which we compare to the existing literature in Section \ref{ssec:literature}.

\vspace{1em}

\textbf{Notation.} For $\cX:=\{1,\ldots,d\}$, the space $\R^\cX$ is identified with $\R^d$, and we interchangeably use the notation $a(x)$ and $a_x$ to denote the $x$th coordinate of any $a\in\R^d$. The $(d-1)$-dimensional vector $a_{-x}$ is defined by deleting the $x$th entry from $a$. Further, if $\psi:A\times B\to C$ is a function, then $\psi(a):B\to C$ is defined by $\psi(a)(b):=\psi(a,b)$. For $p\in[0,\infty]$, the $p$-norm is denoted by $|\cdot|_p$. For $p=0$, we recall the definition of the $0$-norm: $|a|_0 :=|\{x:a_x\neq0\}|$. For a function $\psi:\R^k\to\R^{k'}$, we define its Lipschitz constant with respect to the $|\cdot|_1$-norm by $\Lip_1(\psi) := \sup_{\tilde \mu\neq\mu} |\psi(\tilde \mu)-\psi(\mu)|_1/|\tilde \mu-\mu|_1$. Finally, probability distributions/vectors are treated as row vectors.

\section{Problems and main results}

\subsection{The general finite-player problem}

We consider a dynamic stochastic game between $N+1$ players or agents indexed by $\{0,\ldots,N\}$ in discrete time over a finite time horizon $\bar \cT=\{0,h,2h, \ldots, Kh\}$. 
Here, $h>0$ is the time step and we denote the time horizon by $T=Kh$. We write $\cT = \bar \cT\setminus\{T\}$. Let us fix a player $0\leq i \leq N$. Player $i$'s state is described by a discrete-time stochastic process $(X^i_t)_{t\in\bar \cT}$ taking values in a finite set $\cX=\{1,\ldots,d\}$.\footnote{The specific choice of $\cX$ is irrelevant. In particular, we do not assume any ordering on $\cX$.} This player uses a feedback policy $ \alpha^i:\cT\times\cX^{N+1}\to\Sigma^{d-1}$ to control the transition probabilities of $X^i_t$. The feedback control $\alpha^i$ is allowed to depend on the current time and position of the $N+1$ players. Here, the set of probability measures on $\cX$ is identified with the $(d-1)$-dimensional probability simplex $\Sigma^{d-1}:=\{p\in[0,1]^d:\sum_x p_x= 1\}$.
Given feedback policies $\alpha=(\alpha^0,\ldots,\alpha^{N})$ and given players' states $(x^0,\ldots,x^N)$ at time $t$, the states of players at time $t+h$ are drawn independently of each other. More precisely, let $X=(X^0,\ldots,X^N)$ be the state process which tracks the positions of all players. Without loss of generality, we let $X$ be the canonical element in the space $\Omega=(\cX^{N+1})^{\bar\cT}$. Its $\P^\alpha$-transition probabilities are given by
$$
\P^\alpha[X_{t+h} = y \,|\, X_t=x] = \prod_{i=0}^N \alpha^i(t,x)(y^i),\quad t\in\cT,\ x,y\in\cX^{N+1}.
$$
Given the controls of the other players $\beta=(\beta^j)_{j\neq i}$, player $i$  minimizes the following expected cost, which is the discretization of the continuous-time problem that we discuss in Section \ref{ssec:literature} below. Inspired by the continuous-time setting, our main structural assumption on the cost functional is that we penalize the \emph{rate} of changing states.  Precisely, at time $t\in\bar\cT$ and position $x\in\cX^{N+1}$ of all players, player $i$ solves
\be\label{eq:N-player-general-value-function}
\begin{split}
v^i(t,x;\beta)&:=\inf_{\alpha^i}\ \cJ^i(t,x,\alpha^i;\beta),\quad \text{where}\\
\cJ^i(t,x,\alpha^i;\beta) &:= \E^{\alpha^i\otimes \beta}\Bigl[
\sum_{t\leq s<T} \ell^i(X_s,\alpha^i(t,X_s)/h)h+g^i(X_T)
\,|\,X_t=x\Bigr],
\end{split}
\ee
$\ell^i: \cX^{N+1}\times [0,\infty)^{d}\to [0,\infty)$ and $g^i:\cX^{N+1}\to [0,\infty)$ are running and terminal cost functions, respectively, and we use the notation
$$
(\alpha^i\otimes \beta)^j:=
\begin{cases}
\alpha^i,&i=j,\\
\beta^j,&i\neq j,
\end{cases}
$$
for any $0\leq i,j\leq N$. $\cJ^i$ is the \emph{cost functional} while $v^i$ is the \emph{value function} of player $i$. 
For future use, for a given strategy profile of all players $\alpha=(\alpha^0,\ldots,\alpha^{N})$, we define the vector of strategies of players other than $i$ by $\alpha^{-i}:=(\alpha^j)_{j\neq i}$.

\begin{Def}[Markov perfect equilibrium]
{\rm
A strategy profile $\alpha^* = (\alpha^{*,0},\ldots, \alpha^{*,N}):\cT\times\cX^{N+1}\to(\Sigma^{d-1})^{N+1}$ is called a \emph{Markov perfect (Nash) equilibrium (MPE)} if
$$
\cJ^i(t,x,\alpha^{*,i};\alpha^{*,-i})\leq \cJ^i(t,x, \alpha^i;\alpha^{*,-i})
$$
for any feedback control $\alpha^i:\cT\times\cX^{N+1}\to\Sigma^{d-1}$, any $(t,x)\in\cT\times\cX^{N+1}$ and any $0\leq i\leq N$. 
}
\end{Def}

We recall some facts about discrete dynamic programming equations in Appendix \ref{app:dynamic-progamming}. Finding MPEs amounts to solving a coupled system of dynamic programming equations. This set of equations is commonly referred to as the \emph{Nash system} and it reads, for $0\leq i\leq N$, $0\leq t<T$ and $x\in\cX^{N+1}$,
\be \tag{Nash system}\label{eq:Nash-System}
\left\{
\begin{split}
v^i(t,x) &= \underset{a^i\in\Sigma^{d-1}}{\inf}\ \sH^i(x,v^i(t+h),a^i;\alpha^{-i}(t,x)),\\
\alpha^i(t,x) &= \underset{a^i\in\Sigma^{d-1}}{\mathrm{arg\,min}} \ \sH^i(x,v^i(t+h),a^i;\alpha^{-i}(t,x)),\\
v^i(T,x) &= g^i(x).
\end{split}
\right.
\ee
Here, for $(x,\varphi,a)\in\cX^{N+1}\times \R^{d(N+1)}\times\Sigma^{d-1}$, we defined player $i$'s \emph{Hamiltonian} by
$$
\sH^i(x,\varphi,a;\alpha^{-i}(t,x))  := \ell^i(x,a/h)h + \sum_{y=(y^0,\ldots,y^N)\in\cX^{N+1}} \varphi(y) a(y^i)\prod_{j\neq i}\alpha^j(t,x)(y^j).
$$ 

In the next section, we consider a game where $(\ell^i,g^i)=(\ell,g)$ for all $0\leq i\leq N$, with some common cost functions $\ell,g$, and show how \eqref{eq:Nash-System} reduces to the Nash-Lasry-Lions equation. We note that the techniques introduced in the next section to prove the existence and uniqueness of the solutions to the Nash-Lasry-Lions equation can also be used to establish the same for \eqref{eq:Nash-System}.

\subsection{The symmetric finite-player problem}

We now consider a game with symmetric players and derive the Nash-Lasry-Lions equation. The assumption of symmetry will allow us to obtain a \emph{mean-field game} in the limit $N\to\infty$.  More precisely, we assume that, for all $0\leq i\leq N$, the cost functions are of \emph{mean-field type}:
\be \label{eq:symmetry}
\ell^i(x,a) = \ell(x^i, \frac1N \sum_{j\neq i}\delta_{\{x^j\}} ,a),\quad g^i(x) = g(x^i,\frac1N \sum_{j\neq i}\delta_{\{x^j\}})
\ee
for some functions $\ell:\sX_N\times[0,\infty)^d\to[0,\infty)$, $g:\sX_N\to[0,\infty)$. Here we set
$$
\Sigma^{d-1}_N:=\{z\in\Sigma^{d-1} : z_x\in\{0,1/N,\ldots,1\}\ \forall x\in\cX\},\quad \sX_N := \cX\times\Sigma^{d-1}_N,
$$
and we identify the set of empirical probability measures on $\cX$ with $\Sigma^{d-1}_N$. The analysis simplifies in this symmetric case. We fix a player and call it the \emph{tagged player}, and we refer to the untagged players as \emph{other players}. The state process of the problem can be described by a Markov chain $(X_t,Z_t)$ taking values in $\sX_N$. $X_t$ now denotes the state of the tagged player at time $t$ while $Z_t$ refers to the distribution of other players across states at time $t$. Given a feedback control $\alpha(t,x,z)$ of the tagged player and a feedback control $\beta(t,x,z)$ that is used by every other player, the transition probabilities are given by
$$
\P^{\alpha\otimes\beta}[X_{t+h} = x',Z_{t+h}=z' \, |\, X_t =x, Z_t=z] = \alpha_{x'}(t,x,z)\,\gamma^\beta_{z'}(t,x,z),
$$
where $\gamma^\beta(t,x,z)\in\Sigma^{d-1}$ denotes the probability distribution of a random variable $Z'(x,z,\beta(t))$, constructed as follows. Here, with an abuse of notation, we let $\alpha\otimes\beta$ denote the joint control in which the tagged player uses $\alpha$ and all other players use $\beta$.  Fix a time $t\in\cT$, a position of the tagged player $x\in\cX$ and an empirical distribution $z\in\Sigma^{d-1}_N$ of untagged players. At time $t$, there are $Nz_y$ untagged players in a state $y\in\cX$. Given that the tagged player is in state $x$, each untagged player in state $y$ chooses a next state independently according to the probability law $\beta(t,y,z+\frac{1}{N}(e_x-e_y))$, where $e_x\in\{0,1\}^d$ denotes the $x$th unit vector. It is clear that the distribution of these agents is given by the random variable $Z'(x,y,z,\beta(t))$ with 
$$
NZ'(x,y,z,\beta(t)) \sim \mathrm{Multinomial} (Nz_y,\beta(t,y,z+e^N_{xy})),\quad \text{where} \quad  e_{xy}^N:=\frac1N(e_x-e_y).
$$
We obtain the distribution of untagged players at time $t+h$ by summing over all possible states $y$:
\be \label{eq:multinomial}
Z'(x,z,\beta(t))=\sum_{y\in\cX}Z'(x,y,z,\beta(t)),
\ee
recalling that $\{Z'(x,y,z,\beta(t))\}_{y\in\cX}$ are independent. 

In the following, we will allow for restrictions on the class of admissible transition probabilities. 

\begin{Def}[and assumption]\label{def:adm-controls-finite-player}
{\rm
For each $x\in\cX$ and $h>0$, let $A(h,x)$ be a closed, convex and non-empty subset of $\Sigma^{d-1}$. 
\begin{itemize}
\item The set of all \emph{admissible feedback rules} $\sA_N=\sA_N(h)$ is the set of all maps $\alpha: \cT\times\sX_N \to \Sigma^{d-1}$ such that $\alpha(t,x,z)\in  A(h,x) $ for all $(t,x,z)\in\cT\times\sX_N$. 
\item For each $x\in\cX$, let $S(x)\subset\cX$ denote the support of $A(h,x)$, i.e., $S(x)$ consists of all $y\in\cX$ such that there exists a transition probability $a\in A(h,x)$ with $a(y)>0$. We assume that $S(x)$ is independent of $h>0$.
\item We set $m:=\max_{x\in\cX} |S(x)|$.
\end{itemize}
}
\end{Def}

Note that setting accommodates models with a \emph{minimum transition probability $h\sigma^2>0$} by setting $ A(h,x) \equiv A(h)$ with $A(h)= \{a\in\Sigma^{d-1}\,:\,a(y)\geq h\sigma^2 \ \forall y\in\cX\}$.

\begin{Ex}\label{ex:nearest-neighbor-I}
{\rm
For a nearest-neighbor model on the one-dimensional torus $S^1\cong[0,2\pi)$ we can let
$$
\cX= \Bigl\{0,\frac{2\pi}{d}, \frac{4\pi}{d},\ldots,\frac{(d-1)2\pi}{d} \Bigr\},\quad A(h,x) \subset \{a\in\Sigma^{d-1} \, : \, a_y=0\ \ \text{if}\ \ |y-x|>2\pi/d\}.
$$
Here, addition is defined modulo $2\pi/d$ and we have 
$$
S(x) = \{x-2\pi/d,x,x+2\pi/d\},\quad m=3.
$$ 
}
\end{Ex}

\begin{Rmk}
{\rm In general, when considering a class of models varying with $d$, note that the upper bound $m=m(d)$ on the allowed transitions may depend on $d$. In contrast, in the above Example \ref{ex:nearest-neighbor-I}, the number $m$ is independent of the dimension $d$.
}
\end{Rmk}

To unify the treatment with the mean-field case, we let running and terminal cost functions be given:
$$
\ell: \sX \times [0,\infty)^{d}\to [0,\infty),\quad g:\sX\to [0,\infty),\quad \text{where}\quad \sX:=\cX\times\Sigma^{d-1}.
$$

Now, given a control $\beta=\beta(t,x,z)$, the tagged player minimizes its \emph{cost functional}
\be\label{eq:cost-functional-tagged-player}
J^N(t,x,z,\alpha;\beta)  := \E^{\alpha\otimes\beta} \Bigl[\sum_{t\leq s<T} \ell(X_s,Z_s,\alpha(s,X_s,Z_s)/h)h + g(X_T,Z_T)\,|\,(X_t,Z_t)=(x,z)\Bigr]
\ee
over $\alpha\in\sA_N$. We define the \emph{value function} of the tagged player by
\be\label{eq:value-function-tagged-player}
v^\beta(t,x,z):=\inf_{\alpha\in\sA_N}\ J^N(t,x,z,\alpha;\beta),\quad (t,x,z)\in\bar\cT\times\sX_N.
\ee

\begin{Def}[Symmetric Markov perfect equilibrium]
A \emph{symmetric Markov perfect (Nash) equilibrium} for the $(N+1)$-player game is characterized by a single feedback rule $\alpha^*\in\sA_N$ such that $J^N(t,x,z,\alpha^*;\alpha^*)=\inf_{\alpha\in\sA_N}\, J^N(t,x,z,\alpha;\alpha^*)$ for all $(t,x,z)\in\cT\times\sX_N$.
\end{Def}

In the same way that the Nash system characterizes Markov perfect equilibria, there is a single equation, which we call the \emph{Nash-Lasry-Lions equation}, whose solution corresponds to symmetric Markov perfect equilibria. It reads, for $(t,x,z)\in \cT\times\sX_N$,
\begin{equation}\tag{$N$-NLL}\label{eq:N-NLL}
\left\{
\begin{split}
v(t,x,z)&= \underset{a\in  A(h,x) }{\inf} \, H(x,z,E(x,z,v(t+h),\alpha(t)),a),\\
\alpha(t,x,z) &= \underset{a\in  A(h,x) }{\mathrm{arg\,min}} \, H(x,z,E(x,z,v(t+h),\alpha(t)),a),\\
v(T,x,z)&=g(x,z),
\end{split} 
\right.
\end{equation}
where, for $(\beta,\varphi):\sX_N\to\Sigma^{d-1}\times\R$,
$$
E(x,z,\varphi,\beta) := \bigl(\E[\varphi(y,Z'(x,z,\beta))]\bigr)_{y\in\cX} \in \R^d,
$$
the expectation is taken with respect to $Z'(x,z,\beta)$ introduced in equation \eqref{eq:multinomial}, and for $(x,\mu,v,a)\in\sX\times\R^d\times\Sigma^{d-1}$, we defined another Hamiltonian by 
\be\label{eq:NLL-hamiltonian}
H(x,\mu,v,a) = \ell(x,\mu,a/h)h +  a\cdot v.
\ee
The unknowns of \eqref{eq:N-NLL} are $(v,\alpha)$, and we note that $\alpha(t)$ appears on both sides of the equation in the second line.

The following assumptions underlie most of our results in this paper. Recall the notation $\sX=\cX\times\Sigma^{d-1}$.

\begin{Asm}
\label{asm:basic}
\phantom{.}
\begin{enumerate}[(i)]
\item (Structural). \label{asm:basic-(i)} For every $(x,\mu,a)\in\sX\times\R^d$, the running cost $\ell(x,\mu,a)$ is independent of $a_x$.
\item (Continuity). The costs $\ell,g$ are continuous.
\item (Unique minimizer). \label{asm:basic-(iii)}
The function $ A(h,x) \ni a\mapsto H(x,\mu,v,a)$ admits a unique minimizer for fixed $h>0$ and $(x,\mu,v)\in\sX\times\R^d$.
\item (Boundedness).\label{asm:basic-(iv)}  The terminal cost $g$ is bounded. Furthermore, for every $(x,\mu,h)\in\sX\times(0,\infty)$, there exists an $a^*(x,\mu,h) \in A(h,x)$ such that 
$$
\limsup_{h\downarrow0}\sup_{(x,\mu)\in\sX} |\ell(x,\mu,a^*(x,\mu,h)/h)| h <\infty.
$$
\end{enumerate}
\end{Asm}

The structural assumption \ref{asm:basic} \eqref{asm:basic-(i)} is used to pass to the continuous-time limit. The boundedness assumption \ref{asm:basic} \eqref{asm:basic-(iv)} implies that, for any $\beta\in\sA_N$, the value function $v^\beta$ is uniformly bounded in $(t,x,\mu,h)\in\cT\times\sX\times(0,\bar h)$ for any finite $\bar h<\infty$. If $\ell(x,\mu,\cdot)$ is quadratic in $a_{-x}$ and transition probabilities of the form $\sigma^2h$, for some $\sigma^2\geq0$, are admissible, then $a^*_y(x,\mu,h)=\sigma^2h$ for $y\neq x$ satisfies the above assumption.

\begin{Rmk}[Correspondence]\label{rmk:correspondence-finite-player}
Under Assumption \ref{asm:basic}, standard results on dynamic programming (see Lemma \ref{lem:bellman}) imply that a given control $\alpha^*\in\sA_N$ is a symmetric Markov perfect equilibrium if and only if there exists a solution $v:\bar\cT\times\sX_N\to[0,\infty)$ to \eqref{eq:N-NLL} with $\alpha=\alpha^*$.
\end{Rmk}

For our uniqueness results, we need the following stronger version of Assumption \ref{asm:basic} \eqref{asm:basic-(iii)}.

\begin{Asm}[Strong convexity of the running cost]\label{asm:convex-cost}
There exists a $\gamma>0$ such that 
\be\label{eq:asm-convex-cost}
\ell(x,\mu, a') \geq \ell(x,\mu,a)+ \phi\cdot ( a'-a) + \gamma | a'-a|_1| a'-a|_\infty
\ee
for all $(x,\mu,a, a')\in\sX\times\R^{2d}$ and $\phi\in\partial_a\ell(x,\mu,a)$ that satisfy $a_y= a'_y=0$ for all $y\notin S(x)$ and $\sum_{y\in\cX} a_y= \sum_{y\in\cX} a'_y$ .  Here, $\partial_a\ell(x,\mu,a)$ denotes the subgradient of $\ell(x,\mu,\cdot)$ at the point $a$.
\end{Asm}

\begin{Rmk}
{\rm 
Recall $m$ from Definition \ref{def:adm-controls-finite-player}. Note that we have 
$$
A(h,x) \subset \{a\in\Sigma^{d-1} \, :\, |a|_0\leq m\},\quad \forall x\in\cX.
$$
If $m<d$, then this restricts the number of possible transitions at each state. Let $\ell:\sX\times \R^d\to[0,\infty)$ be a mapping that, instead of \eqref{eq:asm-convex-cost}, satisfies the following definition of strong convexity with respect to the $|\cdot|_2$-norm:
$$
\ell(x,\mu, a')\geq \ell(x,\mu,a) +y\cdot ( a'-a) + \gamma_0 |a'-a|_2^2.
$$
Then, $\ell$ satisfies Assumption \ref{asm:convex-cost} with $\gamma = \gamma_0/\sqrt{m}$}. This follows by noting that $|a|_1|a|_\infty\leq \sqrt{|a|_0}|a|_2^2$ holds for any $a\in\R^d$.
\end{Rmk}



\begin{Ex}\label{ex:nearest-neighbor-II}
In the setting of Example \ref{ex:nearest-neighbor-I}, define a quadratic cost function  $\ell(x,a) = \frac12 \sum_{y\in S(x) \setminus\{x\}}a_y^2.$
Then Assumption \ref{asm:convex-cost} holds with
\begin{align*}
\ell(x, a')-\ell(x,a) &= \frac12 \sum_{y\in S(x)\setminus\{x\}}((a'_y)^2-a_y^2) = \nabla_a\ell(x,a)\cdot (a '-a)+ \frac12 \sum_{y\in S(x)\setminus\{x\}} (a'_y-a_y)^2\\
&\geq\nabla_a\ell(x,a)\cdot (a'-a) + \frac{1}{6} |a'- a|_2^2\geq \nabla_a\ell(x,a)\cdot (a'-a) + \frac{1}{6\sqrt{3}}|a'-a|_1|a'-a|_\infty,    
\end{align*}
for all $(a,\tilde a)\in\R^{2d}$ that satisfy 
$a_y=\tilde a_y=0$ for any $y\notin S(x)$ and $\sum_y\tilde a_y=\sum_ya'_y$. Note that $\nabla_a\ell(x,a)$ is a $d$-dimensional vector whose $x$th coordinate is $0$.
\end{Ex}

\begin{Thm}\label{thm:symmetric-Nash-system-and-sMPE}
Under Assumption \ref{asm:basic}, the following hold.
\begin{enumerate}[(i)]
\item \label{thm:symmetric-Nash-system-and-sMPE-(i)} \emph{Existence:} Symmetric Markov perfect equilibria exist.
\item \label{thm:symmetric-Nash-system-and-sMPE-(ii)} \emph{Uniqueness:} In addition, let Assumption \ref{asm:convex-cost} hold. Then there exists an $h_*(N,T)>0$ such that, for every $h<h_*(N,T)$, the solution $v$ to the Nash-Lasry-Lions equation \eqref{eq:N-NLL} is unique. 
\end{enumerate}
\end{Thm}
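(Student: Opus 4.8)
For part \eqref{thm:symmetric-Nash-system-and-sMPE-(i)}, the plan is to construct a solution $(v,\alpha)$ of \eqref{eq:N-NLL} by backward recursion in $t\in\bar\cT$ and then invoke Remark~\ref{rmk:correspondence-finite-player}. Starting from $v(T,\cdot)=g\ge 0$, suppose $v(t+h,\cdot):\sX_N\to[0,\infty)$ has been constructed. On the non-empty compact convex set $\cK_h:=\{\beta:\sX_N\to\Sigma^{d-1}:\beta(x,z)\in A(h,x)\ \text{for all }(x,z)\}$ (Definition~\ref{def:adm-controls-finite-player}) consider the map $\Phi_t(\beta)(x,z):=\mathrm{arg\,min}_{a\in A(h,x)}H(x,z,E(x,z,v(t+h),\beta),a)$. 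It is well defined: the minimizer exists by compactness of $A(h,x)$ and continuity of $H$ (Assumption~\ref{asm:basic}(i)), and is unique by Assumption~\ref{asm:basic}\eqref{asm:basic-(ii)}; moreover $\Phi_t$ maps $\cK_h$ into itself. I would then check that $\Phi_t$ is continuous: $\beta\mapsto E(x,z,v(t+h),\beta)$ is continuous because the weights of the law of $Z'(x,z,\beta)$ are polynomial in the entries of $\beta$, and the argmin of a jointly continuous function with a unique minimizer over a fixed compact set depends continuously on the parameter. Brouwer's fixed-point theorem then supplies $\alpha(t)=\Phi_t(\alpha(t))$, and setting $v(t,x,z):=\inf_{a\in A(h,x)}H(x,z,E(x,z,v(t+h),\alpha(t)),a)$ closes the recursion; one has $v(t,\cdot)\ge 0$ since $\ell,g\ge 0$, whence $E(x,z,v(t+h),\cdot)\ge 0$ by the inductive hypothesis.

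For part \eqref{thm:symmetric-Nash-system-and-sMPE-(ii)}, the strategy is to show that, under Assumption~\ref{asm:convex-cost}, the per-step map $\Phi_t$ becomes a contraction once $h$ is small, so both its fixed point $\alpha(t,\cdot)$ and the resulting $v(t,\cdot)$ are uniquely determined by $v(t+h,\cdot)$; uniqueness of $v$ on all of $\bar\cT$ then follows by backward induction, the base case $v(T,\cdot)=g$ being trivial. Two Lipschitz estimates are needed. The \emph{inertia estimate}: for fixed $(x,z,h)$ and $a^*_{x,z}(v):=\mathrm{arg\,min}_{a\in A(h,x)}[\ell(x,z,a/h)h+a\cdot v]$, since every element of $A(h,x)$ is supported on $S(x)$ with total mass $1$, Assumption~\ref{asm:convex-cost} applied to the rescaled vectors $a/h$ shows that $a\mapsto\ell(x,z,a/h)h$ is strongly convex on $A(h,x)$ with modulus $\gamma/h$ in the $|\cdot|_1$--$|\cdot|_\infty$ sense. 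Adding the first-order optimality conditions for $a^*_{x,z}(v)$ and $a^*_{x,z}(\tilde v)$ and combining with this strong convexity and H\"older's inequality gives $\tfrac{2\gamma}{h}|u|_1|u|_\infty\le (v-\tilde v)\cdot\bigl(a^*_{x,z}(\tilde v)-a^*_{x,z}(v)\bigr)\le |v-\tilde v|_\infty|u|_1$ with $u:=a^*_{x,z}(\tilde v)-a^*_{x,z}(v)$, hence $|u|_1\le m|u|_\infty\le \tfrac{mh}{2\gamma}|v-\tilde v|_\infty$. The \emph{interaction estimate}: for $\varphi:\sX_N\to\R$ and $\beta,\tilde\beta\in\cK_h$, I would build a maximal coupling of the transitions of the $N$ untagged players---those in state $y'$ drawing from $\beta(y',z+e^N_{xy'})$ resp.\ $\tilde\beta(y',z+e^N_{xy'})$---so that each player's two draws disagree with probability at most $\tfrac12\|\beta-\tilde\beta\|$, where $\|\beta\|:=\max_{(x',z')}|\beta(x',z')|_1$; a union bound over the $N$ players then gives $\P[Z'(x,z,\beta)\ne Z'(x,z,\tilde\beta)]\le \tfrac N2\|\beta-\tilde\beta\|$, and therefore $|E(x,z,\varphi,\beta)-E(x,z,\varphi,\tilde\beta)|_\infty\le N|\varphi|_\infty\|\beta-\tilde\beta\|$.

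Combining the two estimates, whenever $|\varphi|_\infty\le C_0$ the map $\beta\mapsto\bigl((x,z)\mapsto a^*_{x,z}(E(x,z,\varphi,\beta))\bigr)$ is $\tfrac{mhNC_0}{2\gamma}$-Lipschitz on the complete metric space $(\cK_h,\|\cdot\|)$, where $C_0<\infty$ bounds $\sup_{(t,x,z)}|v(t,x,z)|$ over all solutions of \eqref{eq:N-NLL} with $h<1$ --- such a $C_0$, depending only on $T$, exists by the remark following Assumption~\ref{asm:basic}, since any solution $v$ coincides with the tagged player's value function against $\alpha$. Setting $h_*(N,T):=\min\{1,\gamma/(mNC_0)\}$ makes this Lipschitz constant $\le\tfrac12<1$ for $h<h_*(N,T)$. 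Given two solutions $(v,\alpha),(\bar v,\bar\alpha)$ of \eqref{eq:N-NLL} and assuming inductively $v(t+h,\cdot)=\bar v(t+h,\cdot)=:\varphi$, the controls $\alpha(t,\cdot)$ and $\bar\alpha(t,\cdot)$ are two fixed points of the same contraction $\beta\mapsto\bigl((x,z)\mapsto a^*_{x,z}(E(x,z,\varphi,\beta))\bigr)$ and hence coincide, so that $v(t,x,z)=\inf_a H(x,z,E(x,z,\varphi,\alpha(t)),a)=\bar v(t,x,z)$; the induction closes.

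I expect the interaction estimate to be the main technical obstacle: one must quantify the dependence of the law of the next empirical distribution $Z'(x,z,\cdot)$ on the feedback rule used by the $N$ untagged players, and the coupling necessarily costs a factor of order $N$ --- which is exactly why $h_*$ must depend on $N$. Conceptually, the crux is that strong convexity of the running cost makes the per-step best-response map contract with a factor of order $h$ (the inertia estimate), and for $h$ small this dominates the $O(N)$ interaction sensitivity; this is the inertia mechanism that restores uniqueness in the absence of monotonicity. Some care is also needed in the inertia estimate to track the $\gamma/h$ scaling of the strong-convexity modulus under $a\mapsto a/h$ and to use the $|\cdot|_1$--$|\cdot|_\infty$ form of Assumption~\ref{asm:convex-cost} correctly.
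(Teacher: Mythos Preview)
Your argument is correct, but both the existence and the uniqueness parts are organized somewhat differently from the paper.

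For existence, the paper applies Brouwer's theorem once on the full space $\sA_N$ of feedback rules over all of $\cT$, mapping a candidate $\beta$ to the tagged player's optimal response via the complete dynamic programming recursion. Your time-step-by-time-step Brouwer argument is equally valid and in fact dovetails better with the uniqueness proof; the paper's global fixed point is slightly quicker to state but otherwise equivalent.

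For uniqueness, the main substantive difference lies in the interaction estimate. You bound $|E(x,z,\varphi,\beta)-E(x,z,\varphi,\tilde\beta)|_\infty$ through a maximal coupling and a union bound over the $N$ untagged players, which injects the factor $N$ directly and then uses only $\|\varphi\|_\infty$. The paper instead proves an $N$-\emph{free} Wasserstein-type bound (Lemma~\ref{lem:lipschitz-multinom-I}),
\[
\max_{(x,z)}\E\bigl|Z'(x,z,\alpha)-Z'(x,z,\tilde\alpha)\bigr|_1 \le \max_{(x,z)}|\alpha(x,z)-\tilde\alpha(x,z)|_1,
\]
and lets the factor $N$ enter only through the trivial Lipschitz bound $\Lip_1(\varphi(x,\cdot))\le 2N\|\varphi\|_\infty$ on the discrete simplex $\Sigma^{d-1}_N$. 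Both routes yield a contraction constant of order $mNh\|\varphi\|_\infty/\gamma$ and hence the same $h_*(N,T)$, so for Theorem~\ref{thm:symmetric-Nash-system-and-sMPE} there is no loss. The payoff of the paper's decomposition appears later: because Lemma~\ref{lem:lipschitz-multinom-I} is $N$-independent, once one can propagate an $N$-free Lipschitz bound on $v(t+h,\cdot)$ (as in Section~\ref{sec:short-time}), the contraction constant becomes $N$-free as well, yielding the uniform-in-$N$ threshold of Theorem~\ref{thm:well-posedness-MF-NLL}. Your coupling bound, by contrast, hard-wires the factor $N$ into the interaction estimate and would not directly give that refinement. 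The inertia estimate in your proposal is essentially the paper's Lemma~\ref{lem:convex-conjugates}, proved by the same first-order-condition-plus-strong-convexity manipulation.
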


\begin{Rmk}
\rm{
 The proof of Theorem \ref{thm:symmetric-Nash-system-and-sMPE} \eqref{thm:symmetric-Nash-system-and-sMPE-(ii)} reveals that one can equivalently express the smallness condition on $h$ in terms of the convexity $\gamma$ of the running cost in the control variable $a$ being sufficiently large.
}
\end{Rmk}

Theorem \ref{thm:symmetric-Nash-system-and-sMPE} implies the following corollary.

\begin{Cor}
Let Assumptions \ref{asm:basic} and \ref{asm:convex-cost} hold.
For $N\geq1$ and $h>0$, let $T^*(N,h)\in h\Z_+\cup\{\infty\}$ denote the maximal time duration such that \eqref{eq:N-NLL} has a unique solution on $\cT=\{0,h,\ldots,T^*(N,h)\}$. Then $T^*(N,h)\to\infty$ as $h\downarrow0$.
\end{Cor}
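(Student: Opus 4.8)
The plan is to deduce the corollary directly from Theorem~\ref{thm:symmetric-Nash-system-and-sMPE}~\eqref{thm:symmetric-Nash-system-and-sMPE-(ii)} by a ``quantification of the threshold'' argument. Fix $N\ge1$ and a target horizon $T<\infty$. By the uniqueness theorem there is a constant $h_*(N,T)>0$ such that for every $h<h_*(N,T)$, the \eqref{eq:N-NLL} equation on the grid $\{0,h,\ldots,Kh\}$ with $Kh=T$ has a unique solution, hence $T^*(N,h)\ge T$ for every such $h$ for which $T$ lies on the grid. The only subtlety is that $T^*(N,h)$ is constrained to lie in $h\Z_+$, and an arbitrary fixed $T$ need not be an integer multiple of an arbitrary $h$. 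I would handle this by the usual reparametrisation: it suffices to show $\liminf_{h\downarrow0}T^*(N,h)=\infty$, i.e., for every $M\in\N$ there is $\bar h(N,M)>0$ such that $T^*(N,h)\ge M$ whenever $h<\bar h(N,M)$.

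The key step is to inspect the proof of Theorem~\ref{thm:symmetric-Nash-system-and-sMPE}~\eqref{thm:symmetric-Nash-system-and-sMPE-(ii)}, which the paper flags in the subsequent remark as being driven by a smallness condition on $h$ equivalent to a largeness condition on the convexity constant $\gamma$. Concretely, the uniqueness argument should produce, at each time step, a contraction (or monotone comparison) estimate whose constant degrades multiplicatively over the $K=T/h$ steps; uniqueness on horizon $T$ then holds provided something like $(1+Ch)^{T/h}\cdot(\text{per-step contraction factor})<1$, where the per-step factor is itself $O(h)$ relative to the convexity. Since $(1+Ch)^{T/h}\le e^{CT}$ stays bounded as $h\downarrow0$ for fixed $T$, while the per-step factor vanishes, the product is $<1$ for all sufficiently small $h$; this is exactly the content of $h_*(N,T)>0$. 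Running this with $T=M$ fixed gives a threshold $\bar h(N,M):=h_*(N,M)>0$ below which \eqref{eq:N-NLL} admits a unique solution on any grid of length $\le M$, in particular $T^*(N,h)\ge M-h\ge M-\bar h(N,M)$, which exceeds $M-1$ once $\bar h(N,M)<1$. Shrinking the threshold to $\min\{h_*(N,M),1\}$ if necessary, we get $T^*(N,h)>M-1$ for all $h<\bar h(N,M)$.

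Finally I would assemble this into the limit statement. Given any $R<\infty$, pick $M\in\N$ with $M-1\ge R$ and set $\bar h:=\min\{h_*(N,M),1\}>0$; then for every $h<\bar h$ we have $T^*(N,h)>M-1\ge R$. Since $R$ was arbitrary, $T^*(N,h)\to\infty$ as $h\downarrow0$, which is the claim. The proof also covers the case $T^*(N,h)=\infty$ trivially, since that value only makes the inequality easier.

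The main obstacle is purely bookkeeping rather than conceptual: one must make sure that the threshold $h_*(N,T)$ from the theorem is genuinely a function of $T$ alone (for fixed $N$) and, crucially, that it does not collapse to $0$ as one would need it uniformly in $T$ --- it need not, and the corollary does not ask for that. The one real thing to verify is that the dependence of $h_*(N,T)$ on $T$ does not interact badly with the requirement $T\in h\Z_+$; the reduction to integer horizons $T=M$ above removes that worry, since for $h<1/M$ every integer $\le M$ is automatically a grid point only after further noting that $T^*$ is defined as a supremum of admissible horizons and hence we only need \emph{some} grid of length $\ge R$, namely the truncation $\{0,h,\ldots,\lfloor M/h\rfloor h\}$, on which uniqueness is inherited from uniqueness on the slightly longer grid of length $\le M$ by restricting the solution. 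I expect this last inheritance step (uniqueness on a sub-horizon follows from uniqueness on a super-horizon, because \eqref{eq:N-NLL} is solved backward in time and its restriction to $[0,T']$ for $T'<T$ is again a solution of the \eqref{eq:N-NLL} equation on $[0,T']$ with terminal data $v(T',\cdot,\cdot)$) to be the only place needing a sentence of justification.
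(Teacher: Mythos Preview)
Your overall strategy---deduce $T^*(N,h)\to\infty$ from the positivity of the threshold $h_*(N,T)$ for each $T$---is exactly what the paper intends; the corollary is stated without separate proof. But your handling of the grid constraint contains a real gap. You argue that uniqueness on a sub-horizon $\{0,\ldots,T'\}$ is inherited from uniqueness on $\{0,\ldots,T\}$ because the restriction of a solution on the longer grid solves \eqref{eq:N-NLL} on the shorter one ``with terminal data $v(T',\cdot,\cdot)$.'' You have already spotted the problem in that parenthetical: the terminal data is $v(T',\cdot)$, not $g$, so this is a \emph{different} equation from the one whose uniqueness defines $T^*(N,h)$. Restriction to $[0,T']$ therefore does not transfer uniqueness for the problem with terminal datum $g$, and the step you flag as ``needing a sentence of justification'' in fact needs a different argument altogether.

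The paper's route bypasses this via the explicit threshold obtained in the proof of the theorem (Proposition~\ref{prop:finite-player-h(N,T)}):
\[
h_*(N,T)=\frac{\gamma}{2m(d)Nb_*(1+T)},
\]
which is monotone decreasing in $T$. Hence for $h<h_*(N,M)$ and $T':=\lfloor M/h\rfloor h\le M$ one has $h<h_*(N,T')$ as well, so uniqueness holds on $\{0,\ldots,T'\}$ directly---no inheritance needed---and $T'>M-h>M-1$ yields $T^*(N,h)>M-1$. Equivalently, just invert the inequality: uniqueness on $\{0,\ldots,T\}$ holds whenever $T<\gamma/(2m(d)Nb_*h)-1$, and the right-hand side tends to $\infty$ as $h\downarrow0$. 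If you prefer to salvage an inheritance argument without the explicit formula, you must restrict to the \emph{other} end, $\{T-T',\ldots,T\}$ (which does keep terminal datum $g$), invoke time-autonomy of the equation to identify this with the problem on $\{0,\ldots,T'\}$, and use the existence part of Theorem~\ref{thm:symmetric-Nash-system-and-sMPE} to extend any two hypothetical distinct solutions on $\{T-T',\ldots,T\}$ backward to $\{0,\ldots,T\}$, contradicting uniqueness there.
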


\subsection{The mean-field problem}

In the mean-field problem, a \emph{typical player} faces a continuum of agents that generate a flow of probability distributions. The assumption is that no player by itself influences the population distribution. At time $t\in\cT$ and state $x\in\cX$, a typical agent takes a flow of probability distributions $\boldsymbol\mu=(\mu_t,\ldots,\mu_T)$ as given and solves
\be\label{eq:MF-value-function}
\begin{split}
v(t,x;\boldsymbol\mu)&:= \inf_{\alpha\in\sA} \ J^\infty(t,x,\alpha;\boldsymbol\mu),\quad \text{where} \\
J^\infty(t,x,\alpha;\boldsymbol\mu)&:= \E^\alpha \Bigl[\sum_{t\leq s <T} \ell(X_s,\mu_s,\alpha(s,X_s)/h)h + g(X_T,\mu_T) \, | \, X_t=x
\Bigr],
\end{split}
\ee
and $\sA$ is the set of all feedback rules $\alpha: \cT\times\cX \to \Sigma^{d-1}$ such that $\alpha(t,x)\in  A(h,x) $ for all $(t,x)\in\cT\times\cX$. In the following, we write 
$$
\cT(t,T) := \{t,t+h,\ldots,T\}.
$$

\begin{Def}
{\rm
A \emph{solution to the mean-field game} or \emph{mean-field game (Nash) equilibrium (MFG-NE)}, started from $\mu\in\Sigma^{d-1}$ at time $t\in\cT$, is a measure flow $\boldsymbol\mu^*=(\mu^*_t,\ldots,\mu^*_T)$ such that there exists a control $\alpha^*\in\sA$ satisfying the following.
\begin{enumerate}[(i)]
\item Optimality: $J^\infty(t,x,\alpha^*;\boldsymbol\mu^*)\leq J^\infty(t,x,\alpha;\boldsymbol\mu^*)$ for any $\alpha\in\sA$ and any $x\in\cX$;
\item Consistency: For all $t\leq s\leq T$, we have
$$
\mu^*_s(x) = \sum_{y\in\cX} \mu(y) \, \P^{\alpha^*}[X_s=x\,|\,X_{t}=y],\quad x\in\cX.
$$
\end{enumerate}
}
\end{Def}

\begin{Rmk}
{\rm
Instead of the measure flow $\boldsymbol\mu^*$, it is equivalent to define a solution to the mean-field game starting from $(t,\mu)$ in terms of a feedback control $\alpha^*\in\sA$ such that $J^\infty(t,x,\alpha^*;\boldsymbol\mu^*)\leq J^\infty(t,x,\alpha;\boldsymbol\mu^*)$ for any  $\alpha\in\sA$ and $x\in\cX$, where $\mu^*_t=\mu$ and $\mu^*_s=\P^{\alpha^*}\circ X_s^{-1}$ for all $t\leq s\leq T$.
}
\end{Rmk}

Consistent with the finite-player setting, we define the Hamiltonian $H$ by
$$
H(x,\mu,v,a) = \ell(x,\mu,a/h)h +  a\cdot v,\quad (x,\mu,v,a)\in\sX\times\R^d\times\Sigma^{d-1}.
$$
It is well known, see Remark \ref{rmk:MFG-system-correspondence} below, that Nash equilibria correspond to solutions of the forward-backward \emph{mean-field game system} consisting of dynamic programming and Kolmogorov equations for $x\in\cX$ and $t\leq s<T$:
\begin{equation}\tag{MFG system}\label{eq:MFG-system}
\left\{
\begin{split}
v(s,x)&= \underset{a\in  A(h,x) }{\inf} \, H(x,\mu_s,v(s+h),a),\\
\alpha(s,x) &= \underset{a\in  A(h,x) }{\mathrm{arg\,min}} \ H(x,\mu_s,v(s+h),a),\\
\mu_{s+h}(x) &= \sum_{y\in\cX}\mu_s(y)  \alpha_x(s,y),\\
\mu_t&=\mu,\ v(T,x)=g(x,\mu_T).
\end{split}
\right.
\end{equation}

\begin{Rmk}[Correspondence] \label{rmk:MFG-system-correspondence}
{\rm
More precisely, 
under Assumption \ref{asm:basic} and given initial data $(t,\mu)\in\cT\times\Sigma^{d-1}$, a measure flow $\boldsymbol{\mu}\in(\Sigma^{d-1})^{\cT(t,T)}$ is a MFG-NE starting from initial distribution $\mu$ at time $t$ if and only if there exists a function $v:\cT(t,T)\times\cX\to[0,\infty)$ such that $(v,\boldsymbol{\mu})$ solve \eqref{eq:MFG-system} with initial data $(t,\mu)$.
}
\end{Rmk}

Finally, we introduce a solution concept that is the mean-field analogue of the finite-player Markov perfect equilibria. To this end, we let $\sA_\infty$ be the set of all feedback rules $\bar\alpha: \cT\times\sX \to \Sigma^{d-1}$ such that $\bar \alpha(t,x,\mu)\in  A(h,x) $ for all $(t,x,\mu)$.  Fix an initial condition $(t,x)\in\cT\times\cX$ of a typical player and a distribution of the population $\boldsymbol{\mu}=(\mu_s)_{t\leq s\leq T}$. Given a control $\bar \alpha\in\sA_\infty$, define the control $\alpha^{\bar \alpha, \boldsymbol\mu}(s,x) := \bar \alpha(s,x,\mu_s)$
and set 
$$
J^\infty(t,x,\bar \alpha;\boldsymbol\mu) := J^\infty(t,x,\alpha^{\bar \alpha, \boldsymbol\mu};\boldsymbol\mu).
$$

\begin{Def}
A \emph{Markov perfect (Nash) equilibrium for the mean-field game (MF-MPE)} is a feedback policy $\bar \alpha \in\sA_\infty$ such that, for any $(t,x,\mu)\in\cT\times\sX$ and any control $\alpha\in\sA$, we have
$$
J^\infty(t,x,\bar \alpha;\boldsymbol\mu^{t,\mu,\bar \alpha})\leq J^\infty(t,x,\alpha;\boldsymbol\mu^{t,\mu,\bar \alpha}),
$$
with $\boldsymbol\mu^{t,\mu,\bar \alpha}$ being the distribution of $X$ under $\P^{\bar\alpha}$, i.e., for $t\leq s < T$,
$$
\mu_{s+h}^{t,\mu,\bar \alpha}(x) = \sum_{y\in\cX} \mu_{s}^{t,\mu,\bar \alpha}(y) \bar \alpha_x(s,y,\mu_{s}^{t,\mu,\bar \alpha}),\quad \mu^{t,\mu,\bar \alpha}_t=\mu.
$$
\end{Def}

If $\bar \alpha\in\sA_\infty$ is a Markov perfect equilibrium for the mean-field game, then the value function $V(t,x,\mu) = J^\infty(t,x,\bar \alpha; \boldsymbol\mu^{t,\mu,\bar \alpha})$
is characterized by the \emph{mean-field Nash-Lasry-Lions equation} or \emph{master equation}:
\begin{equation}\tag{MF-NLL}\label{eq:MF-NLL}
\left\{
\begin{aligned}
V(t,x,\mu) &= \underset{a\in  A(h,x) }{\inf} \,  H(x,\mu,V(t+h,\cdot,\mu\cdot \bar \alpha(t,\cdot,\mu)),a), \\
\bar \alpha(t,x,\mu) &= \underset{a\in  A(h,x) }{\mathrm{arg\,min}} \,H(x,\mu,V(t+h,\cdot,\mu \cdot\bar \alpha(t,\cdot,\mu)),a),\\
V(T,x,\mu)&=g(x,\mu).
\end{aligned}
\right.
\end{equation}
Here, $(V,\bar\alpha)$ are the unknowns. We interpret $\mu$ as a row vector and $\bar\alpha(t,\cdot,\mu)\in\R^{d\times d}$ as a transition matrix so that each $\bar\alpha(t,y,\mu)$ is a row vector. 
By including the distribution of the population as a state variable, the following remark follows by dynamic programming.

\begin{Rmk}[Correspondence]\label{rmk:correspondence-MF-NLL}
{\rm
Let Assumptions \ref{asm:basic} hold and let $ \bar\alpha \in\sA_\infty$. Then, $ \bar\alpha $ is a MF-MPE if and only if there exists a function $V:\bar\cT\times\sX\to [0,\infty)$ such that $(V, \bar\alpha )$ satisfies \eqref{eq:MF-NLL}.
}
\end{Rmk}

\begin{Prop}\label{prop:MFG-NE and MF-NLL}
Let Assumptions \ref{asm:basic} hold and fix an initial condition $(t,\mu)\in\cT\times\Sigma^{d-1}$.
\begin{enumerate}[(i)]
\item \label{prop:MFG-NE and MF-NLL-(i)} There exists a solution to the mean-field game starting from $\mu$ at time $t$. 
\item \label{prop:MFG-NE and MF-NLL-(ii)} Let $(V,\bar\alpha)$ be a solution to \eqref{eq:MF-NLL}. Then, the pair $(v^*,\mu^*)$, defined by
$$
\mu^*_{t} := \mu,\quad \mu^*_{s+h}(x) := \sum_{y\in\cX} \mu^*_s(y)\bar\alpha_x(s,y,\mu^*_s),\quad v^*(s,x):=V(s,x,\mu^*_s),\qquad s\leq t<T,
$$
is a solution to \eqref{eq:MFG-system}.
\end{enumerate}
\end{Prop}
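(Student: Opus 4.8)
The two assertions are of different nature, so I would treat them separately.

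\textbf{Part (ii)} is a direct verification with no estimates involved. Given a solution $(V,\bar\alpha)$ of \eqref{eq:MF-NLL}, define along the flow $\mu^*$ from the statement the feedback $\alpha^*(s,x):=\bar\alpha(s,x,\mu^*_s)$ and $v^*(s,x):=V(s,x,\mu^*_s)$. The key observation is that, reading $\mu^*_s$ as a row vector and $\bar\alpha(s,\cdot,\mu^*_s)$ as a transition matrix, the defining recursion $\mu^*_{s+h}(x)=\sum_y \mu^*_s(y)\bar\alpha_x(s,y,\mu^*_s)$ says precisely $\mu^*_{s+h}=\mu^*_s\cdot\bar\alpha(s,\cdot,\mu^*_s)$; hence the coupled argument appearing inside \eqref{eq:MF-NLL} satisfies $V(s+h,\mu^*_s\cdot\bar\alpha(s,\cdot,\mu^*_s))=V(s+h,\mu^*_{s+h})=v^*(s+h,\cdot)$. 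Substituting this identity into the first two lines of \eqref{eq:MF-NLL} evaluated at $\mu=\mu^*_s$ yields the first two lines of \eqref{eq:MFG-system} with control $\alpha^*$; the third line (the Kolmogorov equation) is the definition of $\mu^*$; and the data match since $\mu^*_t=\mu$ and $v^*(T,x)=V(T,x,\mu^*_T)=g(x,\mu^*_T)$.

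\textbf{Part (i)}: I would obtain a mean-field game equilibrium as a fixed point of the associated flow map. On the remaining horizon set $\cM:=(\Sigma^{d-1})^{\cT(t,T)}$, a compact convex set, and define $\Phi:\cM\to\cM$ as follows: given $\boldsymbol\mu=(\mu_s)_s$, solve the backward recursion
\[
v(s,x)=\inf_{a\in A(h,x)}H(x,\mu_s,v(s+h),a),\qquad v(T,x)=g(x,\mu_T),
\]
which by Assumption \ref{asm:basic}\eqref{asm:basic-(ii)} has a unique minimizer $\alpha(s,x)\in A(h,x)$ at each step, and then let $\Phi(\boldsymbol\mu)$ be the flow $(\tilde\mu_s)_s$ with $\tilde\mu_t:=\mu$ and $\tilde\mu_{s+h}(x):=\sum_y\tilde\mu_s(y)\alpha_x(s,y)$. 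A fixed point $\boldsymbol\mu^*=\Phi(\boldsymbol\mu^*)$, together with its associated $v$, solves \eqref{eq:MFG-system} with data $(t,\mu)$ and is therefore a MFG-NE by Remark \ref{rmk:MFG-system-correspondence}. Since $\cM$ is compact and convex, it suffices to show that $\Phi$ is continuous and then invoke Brouwer's fixed point theorem.

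The continuity of $\Phi$ is the one genuinely nontrivial step. I would argue by backward induction on $s$: the terminal layer $v(T,\cdot)=g(\cdot,\mu_T)$ is continuous in $\boldsymbol\mu$ by Assumption \ref{asm:basic}(i); assuming $\boldsymbol\mu\mapsto v(s+h,\cdot)$ is continuous, the integrand $(\mu_s,v(s+h),a)\mapsto H(x,\mu_s,v(s+h),a)$ is jointly continuous and $A(h,x)$ is compact, so Berge's maximum theorem gives continuity of the value $\inf_a H$ and upper hemicontinuity of the argmin; the \emph{uniqueness} of the minimizer (Assumption \ref{asm:basic}\eqref{asm:basic-(ii)}) promotes the latter to a continuous single-valued map, whence $\boldsymbol\mu\mapsto(v(s,\cdot),\alpha(s,\cdot))$ is continuous, and the forward push-forward $\tilde\mu_{s+h}=\tilde\mu_s\cdot\alpha(s,\cdot)$ is continuous as a composition. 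I expect this maximum-theorem argument---converting the unique-minimizer hypothesis into genuine continuity of the optimal feedback---to be the main point; well-definedness of $\Phi$ (pushforwards of probability vectors stay in $\cM$) and the remaining bookkeeping are routine.
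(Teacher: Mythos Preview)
Your proof is correct and follows essentially the same approach as the paper: part (ii) is a direct verification (which the paper leaves implicit), and part (i) is a Brouwer fixed-point argument on $(\Sigma^{d-1})^{\cT(t,T)}$ with continuity of the best-response map obtained via Berge's maximum theorem. The only cosmetic difference is in the definition of the fixed-point map: the paper sets $\Phi_{s+h}(\boldsymbol\mu)=\sum_y\mu_s(y)\alpha_x(s,y)$ using the \emph{input} flow $\mu_s$, whereas you push forward the \emph{recursively constructed} output flow $\tilde\mu_s$; both maps share the same fixed points and the argument goes through unchanged.
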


The implications above can be illustrated as follows. 

\begin{figure}[h]
\centering
\begin{tikzpicture}[node distance=4cm, every node/.style={draw=none, minimum width=2cm, minimum height=1cm, align=center}]
\node (box1) {MF-MPE};
\node (box2) [right of=box1] {solutions to \\ MF-NLL};
\node (box3) [right of=box2] {solutions to \\ MFG system};
\node (box4) [right of=box3] {MFG-NE};

\draw[<-> , thick, shorten <=10pt, shorten >=10pt](box1) -- (box2);
\draw[-> , thick, shorten <=10pt, shorten >=10pt] (box2) -- (box3);
\draw[<-> , thick, shorten <=10pt, shorten >=10pt] (box3) -- (box4);
\end{tikzpicture}
\end{figure}

\subsection{Short-time results}

We proceed to provide conditions under which there exists a unique mean-field MPE, or equivalently, a unique solution to the mean-field NLL equation.

\begin{Asm}[Lipschitzness] \label{asm:lipschitz-cost}
There exist $0\leq L_\ell,L_g,L_{\partial\ell}<\infty$ such that, for every $(x,a,\mu,\tilde\mu)\in\cX\times [0,\infty)^d\times \Sigma^{d-1}\times\Sigma^{d-1}$ and $w\in\partial_a\ell(x,\mu,a)$, $\tilde w\in\partial_a\ell(x,\tilde \mu, a)$, we have
\begin{align*}
|\ell(x,\tilde \mu, a)-\ell(x,\mu,a)| \leq L_\ell |\tilde \mu-\mu|_1,  \ \sum_{y\in S(x)}  \!\! |\tilde w(y)-w(y)| \leq L_{\partial\ell} |\tilde \mu-\mu|_1, \ |g(x,\tilde \mu)-g(x, \mu)| \leq L_g |\tilde \mu-\mu|_1.
\end{align*}
As before, $\partial_a\ell(x,\mu,a)$ refers to the subgradient of $\ell(x,\mu,\cdot)$ at the point $a$.
\end{Asm}

Under these assumptions, we have the following existence and uniqueness result in short time, for both the finite-player and mean-field problems.

\begin{Thm}\label{thm:well-posedness-MF-NLL}
Let Assumptions \ref{asm:basic}, \ref{asm:convex-cost} and \ref{asm:lipschitz-cost} hold. Then, there exist constants $h_*,T_*>0$, depending on the data and $m$ but not on the number of players $N+1$, such that, whenever $h<h_*$ and $Kh<T_*$, there exists a unique solution to \eqref{eq:MF-NLL} and \eqref{eq:N-NLL} on $\bar\cT=\{0,h,\ldots,Kh\}$, for any $N\geq1$.
\end{Thm}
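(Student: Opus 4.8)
The plan is to solve \eqref{eq:MF-NLL}, and then by the same scheme \eqref{eq:N-NLL}, by a single backward recursion on $\bar\cT$ running from the terminal datum at $t=T$ down to $t=0$, and to track at every step the Lipschitz constant $L_t:=\sup_{x\in\cX}\Lip_1\bigl(V(t,x,\cdot)\bigr)$ of the value function in the measure variable. The decisive small-$h$ input is a \emph{sensitivity estimate for the pointwise minimizer}: for fixed $h,x,\mu$ write $a^*_{x,\mu}(v):=\arg\min_{a\in A(h,x)}\{\ell(x,\mu,a/h)h+a\cdot v\}$. Assumption \ref{asm:convex-cost} makes $a\mapsto\ell(x,\mu,a/h)h$ ``$(\gamma/h)$-strongly convex'' along increments supported on $S(x)$ and of equal total mass, so the subgradient is $(2\gamma/h)$-monotone there; comparing the first-order variational inequalities for two covectors $v,\tilde v$ over the closed convex set $A(h,x)$ gives $|a^*_{x,\mu}(v)-a^*_{x,\mu}(\tilde v)|_\infty\le\frac{h}{2\gamma}|v-\tilde v|_\infty$, hence $|a^*_{x,\mu}(v)-a^*_{x,\mu}(\tilde v)|_1\le\frac{mh}{2\gamma}|v-\tilde v|_\infty$ since the minimizers are supported on $S(x)$ with $|S(x)|\le m$. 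The identical computation, now varying $\mu$ and invoking the Lipschitz bound on $\partial_a\ell$ from Assumption \ref{asm:lipschitz-cost}, gives $|a^*_{x,\mu}(v)-a^*_{x,\tilde\mu}(v)|_1\le\frac{hL_{\partial\ell}}{2\gamma}|\mu-\tilde\mu|_1$. Uniqueness and continuity of the minimizer itself follow from Assumptions \ref{asm:basic} and \ref{asm:convex-cost}.

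One backward step, under the inductive hypothesis $L_{t+h}\le L^*$ for a threshold $L^*$ to be fixed: for each $\mu$, the map $\beta\mapsto\bigl(a^*_{x,\mu}\bigl((V(t+h,y,\mu\cdot\beta))_{y\in\cX}\bigr)\bigr)_{x\in\cX}$ on $\prod_x A(h,x)$, metrized by $\|\beta-\tilde\beta\|:=\max_y|\beta(y)-\tilde\beta(y)|_1$, is Lipschitz with constant at most $\frac{mh}{2\gamma}L_{t+h}$, using $|\mu\cdot\beta-\mu\cdot\tilde\beta|_1\le\|\beta-\tilde\beta\|$ as $\mu$ is a probability vector; choosing $h_*$ so that $\frac{mh_*L^*}{2\gamma}<\frac12$ makes it a contraction, yielding the unique minimizer-consistent feedback $\bar\alpha(t,\cdot,\mu)$ and hence $V(t,\cdot,\mu)$. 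Feeding the two first-step estimates into the fixed-point identity $\bar\alpha(t,x,\mu)=a^*_{x,\mu}\bigl((V(t+h,y,\mu\cdot\bar\alpha(t,\cdot,\mu)))_y\bigr)$ and absorbing the contraction term gives $\|\bar\alpha(t,\cdot,\mu)-\bar\alpha(t,\cdot,\tilde\mu)\|\le C_1 h\,|\mu-\tilde\mu|_1$ with $C_1=C_1(\text{data},m,L^*)$, so in particular $\bar\alpha(t,\cdot,\cdot)\in\sA_\infty$. The usual suboptimality comparison then bounds $|V(t,x,\mu)-V(t,x,\tilde\mu)|\le L_\ell h|\mu-\tilde\mu|_1+L_{t+h}\bigl(|\mu-\tilde\mu|_1+\|\bar\alpha(t,\cdot,\mu)-\bar\alpha(t,\cdot,\tilde\mu)\|\bigr)$, whence $L_t\le L_{t+h}+Ch$ for some $C=C(\text{data},m,L^*)$.

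To close the induction, fix $L^*:=L_g+1$, let $C$ be the resulting constant, and set $T_*:=1/C$ and $h_*:=\min\{\gamma/(2mL^*),\bar h\}$, with $\bar h$ from Assumption \ref{asm:basic}(iii). Since $L_T=\sup_x\Lip_1(g(x,\cdot))\le L_g\le L^*$ (Assumption \ref{asm:lipschitz-cost}), backward induction on $j=0,\dots,K$ gives $L_{T-jh}\le L_g+Cjh\le L_g+CKh<L_g+CT_*=L^*$, so the hypotheses of the previous step hold at every time; this simultaneously produces a solution $(V,\bar\alpha)$ of \eqref{eq:MF-NLL} on $\bar\cT$ and its uniqueness, since the terminal value is prescribed and each backward step has a unique output. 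By Remark \ref{rmk:correspondence-MF-NLL} this is equivalent to a unique MF-MPE, and $h_*,T_*$ depend only on the data and $m=m(d)$.

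The finite-player equation \eqref{eq:N-NLL} is treated identically, with $\sX_N$ replacing $\sX$, the inner fixed point taken over $\alpha(t,\cdot,\cdot)\in\prod_{(x,z)}A(h,x)$, and the composition $\mu\mapsto\mu\cdot\beta$ replaced by the averaging operator $E(x,z,\varphi,\beta)_y=\E[\varphi(y,Z'(x,z,\beta))]$. The one extra ingredient — and the step I expect to be the main obstacle, precisely because it is what forces $h_*,T_*$ to be \emph{independent of $N$} — is a coupling estimate for the multinomial laws $Z'(x,z,\beta)$: coupling each of the $Nz_y$ independent trials optimally for every $y$ shows $\E|Z'(x,z,\beta)-Z'(x,z,\tilde\beta)|_1\le\|\beta-\tilde\beta\|$, and a companion coupling, using $|e^N_{xy}|_1=2/N$ together with the Lipschitzness of $\beta$ in its measure argument, gives $\E|Z'(x,z,\beta)-Z'(x,\tilde z,\beta)|_1\le C|z-\tilde z|_1$, both with $N$-free constants. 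These yield $|E(x,z,\varphi,\beta)-E(x,z,\tilde\varphi,\beta)|_\infty\le\max_y\|\varphi(y,\cdot)-\tilde\varphi(y,\cdot)\|_\infty$ and $|E(x,z,\varphi,\beta)-E(x,z,\varphi,\tilde\beta)|_\infty\le\bigl(\max_y\Lip_1(\varphi(y,\cdot))\bigr)\|\beta-\tilde\beta\|$, after which the three steps above run verbatim and deliver the same $h_*,T_*$ and a unique solution of \eqref{eq:N-NLL} for every $N\ge1$.
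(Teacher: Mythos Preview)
Your proposal is correct and follows essentially the same route as the paper: the paper packages your minimizer-sensitivity estimate as a conjugate-correspondence lemma (Lemma~\ref{lem:convex-conjugates}), your multinomial coupling estimates as Lemmas~\ref{lem:lipschitz-multinom-I}--\ref{lem:lipschitz-multinom-II}, the one-step contraction and Lipschitz bound on $\hat\alpha$ as Propositions~\ref{prop:finite-player-one-step} and~\ref{prop:MF-one-step}, and then iterates the same linear-in-$h$ recursion $L(k)\le L(k-1)+h\tilde M$ (under the a~priori bound $L(k-1)\le M$) to close the backward induction with $h_*=\gamma/M$ and $T_*=(M-mL_g)/\tilde M$. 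The only cosmetic differences are a factor of~$2$ in your monotonicity constant and the paper's normalization $L_\varphi=m\max_x\Lip_1(\varphi(x,\cdot))$ versus your unweighted $L_t$; note also that Assumption~\ref{asm:basic}\eqref{asm:basic-(iii)} does not single out a specific $\bar h$, so that ingredient in your definition of $h_*$ is superfluous.
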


\subsection{Mean-field convergence} \label{ssec:mean-field-convergence}

Fix the time horizon $T$ and step size $h$. For each $N$, any solution $(v^{(N)},\alpha^{(N)})$  to \eqref{eq:N-NLL} is extended from $\bar\cT\times\sX_N$ to $\bar \cT\times\sX$ by linear interpolation in the $z$-variable. 

\begin{Thm}\label{thm:N-convergence}
Let the assumptions of Theorem \ref{thm:well-posedness-MF-NLL} hold, and let $h_*,T_*$ be as in that theorem. Let $h<h_*$ and $Kh<T_*$ be given. Let $(V,\alpha)$ and $(v^{(N)},\alpha^{(N)})$ be the solutions to \eqref{eq:MF-NLL} and \eqref{eq:N-NLL}, respectively. Set $\bar\cT=\{0,h,\ldots,Kh\}$. Then the following hold:
\begin{enumerate}[(i)]
\item (Convergence of values) As $N\to\infty$, $ v^{(N)}\to V$ on $\bar\cT\times\sX$ uniformly.
\item (Convergence of MPE) As $N\to\infty$, $ \alpha^{(N)}\to \alpha$ on $\cT\times\sX$ uniformly.
\end{enumerate}
\end{Thm}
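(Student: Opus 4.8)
The plan is a backward induction on $t\in\bar\cT$, at each step comparing the finite‑player equation \eqref{eq:N-NLL} with the mean‑field equation \eqref{eq:MF-NLL}. The guiding observation is that the only structural difference between the two systems is the law of motion of the population: in \eqref{eq:N-NLL} the empirical distribution of untagged players evolves stochastically through the multinomial variable $Z'(x,z,\alpha^{(N)}(t))$ of \eqref{eq:multinomial}, whereas in \eqref{eq:MF-NLL} it evolves deterministically through $\mu\mapsto\mu\cdot\alpha(t,\cdot,\mu)$. Since $NZ'$ is a sum of independent multinomials, its conditional mean is $\sum_y z_y\,\alpha^{(N)}(t,y,z+e_{xy}^N)$, which differs from the mean‑field update $z\cdot\alpha^{(N)}(t,\cdot,z)$ by $O(1/N)$ because $|e_{xy}^N|_1=2/N$, and its $\ell^1$‑fluctuations around that mean are $O(1/\sqrt N)$ by the variance bound $\mathrm{Var}(Z'_y)\le 1/(4N)$. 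These two vanishing quantities are the entire source of the convergence.

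Two ingredients are used. First, the $N$‑uniform Lipschitz estimates in the population variable coming out of the short‑time fixed‑point argument behind Theorem~\ref{thm:well-posedness-MF-NLL}: constants $\Lambda_v,\Lambda_\alpha$, independent of $N$, with $\Lip_1\bigl(v^{(N)}(t,x,\cdot)\bigr)\le\Lambda_v$ and $\Lip_1\bigl(\alpha^{(N)}(t,x,\cdot)\bigr)\le\Lambda_\alpha$ on $\Sigma^{d-1}_N$, and the same bounds for $(V,\alpha)$ on $\Sigma^{d-1}$. Second, the stability of the argmin map $a^*(x,\mu,v):=\arg\min_{a\in A(h,x)}\{\ell(x,\mu,a/h)h+a\cdot v\}$, which under Assumptions~\ref{asm:basic}, \ref{asm:convex-cost} and \ref{asm:lipschitz-cost} satisfies $|a^*(x,\mu,v)-a^*(x,\mu,\tilde v)|_1\le\tfrac{mh}{2\gamma}|v-\tilde v|_\infty$ and $|a^*(x,\mu,v)-a^*(x,\tilde\mu,v)|_1\le\tfrac{L_{\partial\ell}h}{2\gamma}|\mu-\tilde\mu|_1$; both follow by writing the first‑order variational inequalities at the two minimizers, adding them, and invoking strong convexity (the factor $h$ is exactly the inertia effect, and it is what will make the recursion below a contraction for small $h$; the two stability bounds are also the engine of the uniform Lipschitz estimates $\Lambda_v,\Lambda_\alpha$).

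\textbf{The induction.} Extend $v^{(N)}(t,\cdot)$ and $\alpha^{(N)}(t,\cdot)$ to $\sX$ by linear interpolation in $z$ and set $\epsilon_t:=\|v^{(N)}(t)-V(t)\|_{\infty,\sX}$ and $\delta_t:=\sup_{(x,\mu)}|\alpha^{(N)}(t,x,\mu)-\alpha(t,x,\mu)|_1$. At $t=T$ both functions equal $g$ on $\Sigma^{d-1}_N$, so $\epsilon_T$ is just the interpolation error of the $L_g$‑Lipschitz map $g(x,\cdot)$, which is $O(1/N)\to0$. For the step from $t+h$ to $t$: since $\inf$ is $1$‑Lipschitz and $|H(x,z,v,a)-H(x,z,\tilde v,a)|\le|v-\tilde v|_\infty$ (as $|a|_1=1$), for $z\in\Sigma^{d-1}_N$ one gets $|v^{(N)}(t,x,z)-V(t,x,z)|\le P_t$, where $P_t$ is the sup‑norm distance between the two continuation‑value vectors $y\mapsto\E\bigl[v^{(N)}(t+h,y,Z'(x,z,\alpha^{(N)}(t)))\bigr]$ and $y\mapsto V(t+h,y,z\cdot\alpha(t,\cdot,z))$; the interpolation bookkeeping then yields $\epsilon_t\le P_t+o_N(1)$ uniformly. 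Splitting $P_t$ by inserting $\E[V(t+h,y,Z')]$ and $V(t+h,y,\E Z')$: the first gap is $\le\epsilon_{t+h}$ since $Z'\in\Sigma^{d-1}_N$ almost surely; the second is $\le\Lambda_v\,\E|Z'-\E Z'|_1\le\Lambda_v d/(2\sqrt N)$; and the third is $\le\Lambda_v|\E Z'-z\cdot\alpha(t,\cdot,z)|_1\le\Lambda_v\bigl(2\Lambda_\alpha/N+\delta_t\bigr)$, using $\E Z'=\sum_y z_y\alpha^{(N)}(t,y,z+e_{xy}^N)$, the Lipschitz bound on $\alpha^{(N)}$ in $z$, and $|e_{xy}^N|_1=2/N$. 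Hence $P_t\le\epsilon_{t+h}+\Lambda_v\delta_t+o_N(1)$. On the other hand $\alpha^{(N)}(t,x,z)$ and $\alpha(t,x,z)$ are the values of $a^*(x,z,\cdot)$ at the two continuation‑value vectors above, so the $v$‑stability of $a^*$ gives $\delta_t\le\tfrac{mh}{2\gamma}P_t+o_N(1)$.

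\textbf{Closing the recursion, and the obstacle.} Combining, $\delta_t\le\tfrac{mh}{2\gamma}\bigl(\epsilon_{t+h}+\Lambda_v\delta_t\bigr)+o_N(1)$; assuming $h_*$ small enough that $\Lambda_v mh/(2\gamma)<1$ for $h<h_*$ (shrinking $h_*$ if necessary—harmless, since the threshold depends only on the data and $m$), we solve for $\delta_t$ and substitute back to get $P_t\le c\,\epsilon_{t+h}+o_N(1)$ and hence $\epsilon_t\le c\,\epsilon_{t+h}+o_N(1)$ for a finite constant $c=c(\text{data},m,h)$. Iterating $K$ times—$K$ is a fixed integer since $h<h_*$ and $Kh<T_*$ are fixed—gives $\max_{t\in\bar\cT}\epsilon_t\le c^K\epsilon_T+(1+c+\dots+c^{K-1})\,o_N(1)\to0$, which is the convergence of the values; then $\delta_t\le\tfrac{mh}{2\gamma(1-\Lambda_v mh/(2\gamma))}\bigl(\epsilon_{t+h}+o_N(1)\bigr)\to0$ for each $t\in\cT$ gives the convergence of the equilibria. \emph{The main obstacle} is the uniform‑in‑$N$ regularity input: one must know that the short‑time scheme of Theorem~\ref{thm:well-posedness-MF-NLL} runs in a class of functions whose Lipschitz constants in the $z$‑variable do not degrade with $N$, and that these constants interact cleanly both with the $O(1/N)$ mesh of $\Sigma^{d-1}_N$ (for the interpolation errors) and with the shift $e_{xy}^N$ inside $Z'$. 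Everything else—the multinomial variance estimate, the perturbation stability of $a^*$, and the piecewise‑linear interpolation bounds—is routine once that input is in hand.
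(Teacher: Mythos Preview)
Your argument is correct and takes a genuinely different route from the paper's. The paper proves the theorem by a soft compactness argument: the uniform Lipschitz bounds from Proposition~\ref{prop:short-time-well-posedness} make $(v^{(N)},\alpha^{(N)})_{N\ge1}$ (after interpolation) an equicontinuous family on $\Sigma^{d-1}$, Arzel\`a--Ascoli extracts a uniformly convergent subsequence, a law-of-large-numbers lemma shows that any limit satisfies \eqref{eq:MF-NLL}, and uniqueness of \eqref{eq:MF-NLL} under the short-time assumptions then forces the whole sequence to converge. Your proof instead runs a direct backward induction with explicit error quantities $\epsilon_t,\delta_t$, decomposes the continuation-value gap into (i) the value error at $t+h$, (ii) a multinomial fluctuation $O(d/\sqrt N)$, and (iii) a mean-shift $O(\Lambda_\alpha/N)+\Lambda_v\delta_t$, and closes the loop via the argmin stability coming from strong convexity. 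The two approaches use the same $N$-uniform Lipschitz input from the short-time scheme; what differs is the passage to the limit. Your method is more laborious but yields an explicit rate (essentially $O(1/\sqrt N)$ per step, propagated over $K$ fixed steps), which the paper's compactness proof does not provide. Two minor remarks: the contraction condition $m\Lambda_v h/\gamma<1$ in fact holds automatically for $h<h_*$ without further shrinking, since by the proof of Proposition~\ref{prop:short-time-well-posedness} one has $m\Lambda_v\le M$ and $h_*=\gamma/M$; and your stability constant for $a^*$ in $v$ should be $mh/\gamma$ rather than $mh/(2\gamma)$, but this is immaterial.
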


\subsection{Continuous-time limit} \label{ssec:continuous-limit}

We now fix the time horizon $T$ and the number of players $N+1$, and we consider the continuous-time limit $h\downarrow 0$. We continue to briefly define the limiting problem and refer to \cite{GMS1} for a detailed discussion. In the continuous-time model, players' state processes follow a time-inhomogeneous continuous-time Markov chain, which, by an abuse of notation, we again denote by $(X_t,Z_t)$. For a given \emph{thermal noise} level $\sigma^2\geq0$, let $\sA_{\mathrm{cts}}$ be the set of all \emph{admissible feedback controls} $\beta$, that is, Lipschitz continuous functions $\beta:[0,T]\times\sX_N\to\R^d$ satisfying $\beta(t,x,z)\in\cR(\sigma^2,x)$ for any $(t,x,z)\in[0,T]\times\sX_N$. Here, the set of \emph{admissible transition rates} from a state $x$ is defined by
$$
\cR(\sigma^2,x):=\Big\{a\in\R^d: a_y\geq\sigma^2 \ \text{for all}\ y\in S(x)\setminus\{x\}, \ a_y=0 \ \text{for all} \ y\in\cX\setminus (S(x) \cup\{x\}) \ \text{and}\  a_x = -\sum_{y\neq x}a_y\Big\},
$$
where, as before, $S(x)\subset\cX$ denotes a (fixed) set of allowable transitions. In words, $\beta_{y}(t,x,z)$ is the rate of changing from the current state $x\in\cX$ to a state $y\neq x$ at time $t\in[0,T]$, given that the empirical distribution of other players is $z\in\Sigma^{d-1}_N$. Given a control $\alpha\in\sA_{\mathrm{cts}}$ of the tagged player and a control $\beta\in\sA_{\mathrm{cts}}$ used by all other players, for any $(t,x,z)\in[0,T]\times\sX_N$ and $y,w,x'\in\cX$, we have
$$
\P^{\alpha\otimes\beta}[X_{t+\Delta t} = x' \, |\, (X_t,Z_t)=(x,z)] = \alpha_{x'}(t,x,z) \, \Delta t + o(\Delta t),\quad \text{as} \ \Delta t \downarrow0,
$$
and
$$
\P^{\alpha\otimes\beta}[Z_{t+\Delta t} = z+ e^N_{wy} \, |\, (X_t,Z_t)=(x,z)] = Nz_y \beta_w(t,y,z+e^N_{xy}) \, \Delta t + o(\Delta t),\quad \text{as} \ \Delta t \downarrow0,
$$
where we recall the definition $e^N_{xy}=\frac1N(e_x-e_y)$, and $o(\Delta t)/\Delta t\to0$ as $\Delta t\downarrow0$. The tagged player solves
\begin{equation*}
\begin{split}
v^\beta_{\mathrm{cts}}(t,x,z)&:= \inf_{\alpha\in\sA_{\mathrm{cts}}}  \ \ J_{\mathrm{cts}}(t,x,z,\alpha;\beta),\quad \text{where} \\
J_{\mathrm{cts}}(t,x,z,\alpha;\beta) &:= \E^{\alpha\otimes\beta} 
\Bigl[
\int_t^T\ell(X_s,Z_s,\alpha(s,X_s,Z_s))\,\d s + g(X_T,Z_T) \, | \, (X_t,Z_t)=(x,z)
\Bigr].
\end{split}
\end{equation*}
In full analogy with the discrete-time definition, a control $\alpha^*\in\sA_{\mathrm{cts}}$ is called a \emph{Markov perfect (Nash) equilibrium (MPE)} if
$$
J_{\mathrm{cts}}(t,x,z,\alpha^*;\alpha^*) \leq J_{\mathrm{cts}}(t,x,z,\alpha;\alpha^*)\quad \forall \alpha\in\sA_{\mathrm{cts}} \ \forall (t,x,z)\in [0,T]\times \sX_N.
$$
Given $\beta:\sX_N\to\R^d$, we define the operator $\sL^\beta$ by setting
\begin{equation}\label{eq:generator}
(\sL^\beta\varphi)(x,z) := \sum_{ w,y\in\cX:\, w\neq y} Nz_y \beta_w(y,z+e^N_{xy}) [\varphi(x,z+e^N_{wy})-\varphi(x,z)],\quad (x,z)\in\sX_N,
\end{equation}
for any $\varphi:\sX_N\to\R$, 
and we define the Hamiltonian $\cH$ by
$$
\cH(x,z,v,a) :=  \ell(x,z,a) + a\cdot v,\quad (x,z,v,a)\in\sX_N\times\R^d\times\R^d.
$$
Finally, we denote the minimizer of the Hamiltonian by
$$
\hat \alpha:\sX_N\times\R^d\to\R^d,\qquad \hat \alpha(x,z,v):= \underset{a\in\cR(\sigma^2,x)}{\mathrm{arg\,min}} \ \cH(x,z,v,a).
$$
Gomes, Mohr and Souza \cite{GMS1} show that there is a unique MPE which corresponds to the unique classical solution $v_{\mathrm{cts}}:[0,T]\times\sX_N\to[0,\infty)$ of the following continuous-time finite-player NLL equation:
\be\label{eq:cts-time-finite-player-NLL}
\left\{
\begin{split}
-\frac{\d}{\d t} v_{\mathrm{cts}}(t,x,z) &= \underset{a\in\cR(\sigma^2,x)}{\inf}\cH(x,z,\Delta_x v_{\mathrm{cts}}(t,\cdot,z),a)+ \sL^{\alpha_{\mathrm{cts}}(t)} v(t) (x,z),\\
\alpha_{\mathrm{cts}}(t,x,z) &= \hat\alpha(x,z,\Delta_xv_{\mathrm{cts}}(t,\cdot,z)),\\
v_{\mathrm{cts}}(T,x,z)&=g(x,z).
\end{split}
\right.
\ee
Here, we used the notation
$$
\Delta_x\varphi := (\varphi(y)-\varphi(x))_{y\in\cX}\in\R^d
$$
for any $\varphi:\cX\to\R$. Note that $\Delta_x\varphi(x)=0$.
We continue to state our convergence result. For $K\geq1$, define $h=h^{(K)}:=T/K$ and $\bar\cT^{(K)}:=\{0,h,\ldots,(K-1)h,T\}$ and we assume
\be\label{eq:cts-limit-adm-prob}
A(h,x)= \big\{a\in\Sigma^{d-1}:a_y\geq h\sigma^2\ \text{for all} \ y\in S(x)\setminus\{x\} \ \text{and} \ a_y=0 \ \text{for all} \ y\in \cX\setminus(S(x)\cup\{x\}) \big\}.
\ee
For sufficiently large $K$, let $(v^{(K)},\alpha^{(K)})$ be the unique solution to \eqref{eq:N-NLL} on $\bar\cT^{(K)}$, guaranteed by Theorem \ref{thm:symmetric-Nash-system-and-sMPE}, with the aforementioned choice of $A(h,x)$, and extend $(v^{(K)},\alpha^{(K)})$ to $[0,T]\times\sX_N$ by linear interpolation in the time variable. Set $$
\mathfrak{a}^{(K)}(t,x,z):=\alpha^{(K)}(t,x,z)-e_x,\quad (t,x,z)\in[0,T]\times\sX_N,\ K\geq1.
$$

\begin{Thm}\label{thm:continuous-convergence}
Under Assumptions \ref{asm:basic}, \ref{asm:convex-cost} and \ref{asm:lipschitz-cost} we have the following.
\begin{enumerate}[(i)]
\item (Convergence of values) As $K\to\infty$, $v^{(K)}\to v_{\mathrm{cts}}$ uniformly on $[0,T]\times\sX_N$.
\item (Convergence of MPE) As $K\to\infty$, $\mathfrak{a}^{(K)}/h^{(K)}\to \alpha_{\mathrm{cts}}$ uniformly on $[0,T]\times\sX_N$.
\end{enumerate}
\end{Thm}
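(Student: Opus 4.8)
The plan is to exploit the fact that both the discrete-time $N$-NLL equation \eqref{eq:N-NLL} and the continuous-time NLL equation \eqref{eq:cts-time-finite-player-NLL} are finite-dimensional ODE/recursion systems on the fixed finite state space $\sX_N$ (here $N$ is frozen), so the argument reduces to a quantitative stability estimate between a Euler-type scheme and its limiting ODE, run backward in time from the common terminal data $g$. First I would unwind the Hamiltonian in \eqref{eq:N-NLL} under the choice $A(h)=\{a:a_y\ge h\sigma^2\}$ and Assumption \ref{asm:cost-2}: writing $\alpha(t,x,z)=e_x+\mathfrak a(t,x,z)$ with $\sum_y\mathfrak a_y=0$ and $\mathfrak a_y\ge h\sigma^2$ for $y\ne x$, the term $a\cdot v(t+h)$ becomes $v(t+h,x,z)+\mathfrak a\cdot \Delta_x v(t+h,\cdot,z)$, and since $\ell$ is independent of $a_x$, the minimization over $\mathfrak a/h$ is exactly the minimization defining $\hat\alpha$ in the continuous problem, but with the frozen running cost $\ell(x,z,\mathfrak a/h)$ and an extra constraint coming from $A(h)$ that disappears in the limit. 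Plugging the minimizer back in, \eqref{eq:N-NLL} rearranges to
\[
\frac{v^{(K)}(t,x,z)-v^{(K)}(t+h,x,z)}{h}
= \inf_{a\in[\sigma^2,\infty)^{d-1}}\cH\bigl(x,z,\Delta_x v^{(K)}(t+h,\cdot,z),a\bigr)
+ \bigl(\sL^{\alpha^{(K)}(t)} v^{(K)}(t+h)\bigr)(x,z),
\]
which is precisely the backward Euler scheme for \eqref{eq:cts-time-finite-player-NLL} up to the $\cH$-term being evaluated at $\Delta_x v(t+h)$ rather than $\Delta_x v(t)$ and the $A(h)$-constraint; both discrepancies are $O(h)$ uniformly.

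Next I would set up the comparison. Let $w^{(K)}(t,x,z):=v^{(K)}(t,x,z)-v_{\mathrm{cts}}(t,x,z)$ on the grid $\bar\cT^{(K)}$. Using Theorem \ref{thm:well-posedness-MF-NLL} (or rather its finite-player half, valid for small $h$ and $N$ fixed) we know $v^{(K)}$ exists and is uniformly bounded, hence so is $\Delta_x v^{(K)}$; by Assumption \ref{asm:convex-cost} the minimizer $\mathfrak a^{(K)}/h$ is uniformly bounded as well, so $\alpha^{(K)}(t,\cdot,\cdot)$ is uniformly close to $e_x$ and the generators $\sL^{\alpha^{(K)}(t)}$ have uniformly bounded coefficients. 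The map $(x,z,p)\mapsto \inf_a\cH(x,z,p,a)+\sL^{\hat\alpha(\cdot,p)}(\cdot)$ appearing on the right of \eqref{eq:cts-time-finite-player-NLL} is Lipschitz in $p$ on bounded sets (Assumptions \ref{asm:basic}, \ref{asm:convex-cost}, \ref{asm:lipschitz-cost} give strong convexity hence Lipschitz dependence of the argmin, and the envelope theorem gives Lipschitz dependence of the value). Standard backward-Euler consistency (the continuous solution $v_{\mathrm{cts}}$ is $C^1$, in fact as smooth as the data by \cite{GMS1}, so its one-step truncation error is $O(h)$) combined with this one-sided Lipschitz stability yields a discrete Grönwall inequality $\|w^{(K)}(t)\|_\infty \le (1+Ch)\|w^{(K)}(t+h)\|_\infty + Ch^2$, and since $w^{(K)}(T)=0$ we get $\|w^{(K)}\|_\infty \le C'h$, which is (i).

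For (ii), the convergence of equilibria, I would use the strong convexity once more: the discrete minimizer $\mathfrak a^{(K)}(t,x,z)/h$ solves the same strongly convex program as $\alpha_{\mathrm{cts}}(t,x,z)$ except (a) with $\Delta_x v^{(K)}(t+h,\cdot,z)$ in place of $\Delta_x v_{\mathrm{cts}}(t,\cdot,z)$ and (b) over $[h\sigma^2/h,\infty)^{d-1}=[\sigma^2,\infty)^{d-1}$ — wait, one must be careful: the constraint set for $\mathfrak a/h$ is exactly $[\sigma^2,\infty)^{d-1}$ after dividing by $h$, so actually the constraint sets match and only discrepancy (a) remains. Strong convexity of $a\mapsto \cH(x,z,p,a)$ with modulus independent of $h$ gives a quadratic growth bound around the minimizer, so $|\mathfrak a^{(K)}/h - \hat\alpha(x,z,\Delta_x v^{(K)}(t+h,\cdot,z))| \le C|\Delta_x v^{(K)}(t+h,\cdot,z)-\Delta_x v^{(K)}(t,\cdot,z)|^{1/2}=O(h^{1/2})$ from the time-regularity of $v^{(K)}$ established in proving (i); then Lipschitzness of $\hat\alpha$ in its $p$-argument plus part (i) gives $\hat\alpha(x,z,\Delta_x v^{(K)}(t+h,\cdot,z))\to \hat\alpha(x,z,\Delta_x v_{\mathrm{cts}}(t,\cdot,z))=\alpha_{\mathrm{cts}}(t,x,z)$ uniformly. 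Finally the linear-in-time interpolation changes $\mathfrak a^{(K)}/h$ by at most its one-step oscillation $O(h^{1/2})$, so uniform convergence on all of $[0,T]\times\sX_N$ follows.

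The main obstacle I anticipate is the bookkeeping around the $h$-dependent admissible set and the scaling by $h$ in the running cost $\ell(\cdot,\cdot,a/h)h$: one must check that minimizing $\ell(x,z,\mathfrak a/h)h + \mathfrak a\cdot\Delta_x v(t+h)$ over $\mathfrak a\in h\cdot[\sigma^2,\infty)^{d-1}$ is genuinely equivalent, after the substitution $b=\mathfrak a/h$, to minimizing $\ell(x,z,b)+b\cdot\Delta_x v(t+h)$ over $b\in[\sigma^2,\infty)^{d-1}$ — which it is — and that the resulting per-step map is a bona fide $O(h^2)$-consistent, one-sided-Lipschitz-stable discretization of \eqref{eq:cts-time-finite-player-NLL}; establishing the requisite uniform (in $K$) bounds on $v^{(K)}$, $\Delta_x v^{(K)}$, and the time-increments of $v^{(K)}$ — not merely pointwise convergence — is where the real work lies, and it rests on Assumption \ref{asm:basic}\eqref{asm:basic-(iii)} for the a priori bound on $v^{(K)}$ and on Assumption \ref{asm:convex-cost} for control of the minimizers.
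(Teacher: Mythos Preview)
Your quantitative Gr\"onwall strategy is a genuinely different route from the paper's. The paper argues by compactness: it first shows that $(v^{(K)},\mathfrak a^{(K)}/h^{(K)})_{K\ge K_0}$ is uniformly bounded and equicontinuous on $[0,T]$ (this is where the real work lies, using the conjugate-correspondence Lemma~\ref{lem:convex-conjugates} together with a careful expansion of the multinomial transition probabilities, isolated as Lemma~\ref{lem:cts-time-limit}), extracts a uniformly convergent subsequence by Arzel\`a--Ascoli, verifies that any subsequential limit solves \eqref{eq:cts-time-finite-player-NLL}, and concludes via uniqueness of the continuous-time solution. Your direct approach, if it went through, would yield an explicit rate $O(h)$, which the paper's does not.

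There are, however, two substantive gaps. First, your displayed rearrangement is not correct as stated: writing $a=e_x+\mathfrak a$ in $a\cdot E(x,z,v^{(K)}(t+h),\alpha^{(K)}(t))$ produces $\E[v^{(K)}(t+h,x,Z')]+\mathfrak a_{-x}\cdot\E[\Delta_x v^{(K)}(t+h,\cdot,Z')]$, so the $\cH$-term carries $\E[\Delta_x v^{(K)}(t+h,\cdot,Z')]$ rather than $\Delta_x v^{(K)}(t+h,\cdot,z)$, and the remaining piece is $\tfrac{1}{h}\bigl(\E[v^{(K)}(t+h,x,Z')]-v^{(K)}(t+h,x,z)\bigr)$, not literally $\sL^{\alpha^{(K)}(t)}v^{(K)}(t+h)$. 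Showing that these two discrepancies are uniformly $O(h)$ is precisely the content of the multinomial expansion in Lemma~\ref{lem:cts-time-limit}, and you have not listed it among your $O(h)$ errors. Second, and more seriously for the Gr\"onwall step, the one-step map $v(t+h)\mapsto v(t)$ in \eqref{eq:N-NLL} is \emph{implicit} in $\alpha(t)$: the optimizer depends on $E(x,z,v(t+h),\alpha(t))$, which in turn depends on $\alpha(t)$ through the law of $Z'$. Your stability bound $\|w(t)\|_\infty\le(1+Ch)\|w(t+h)\|_\infty+Ch^2$ therefore cannot be read off from Lipschitzness of $p\mapsto\inf_a\cH(\cdot,p,a)$ alone; you must control how the one-step fixed point $\alpha(t)$ varies with $v(t+h)$, which requires the contraction machinery of Proposition~\ref{prop:finite-player-one-step}. (A minor point: after the substitution $b=\mathfrak a_{-x}/h$ the constraint set is $\{b\ge\sigma^2:\sum_y b_y\le(1-h\sigma^2)/h\}$, not $[\sigma^2,\infty)^{d-1}$; the upper bound is finite for each $h$, though it becomes vacuous in the limit.)
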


\subsection{Comparison with existing works} \label{ssec:literature}

We study stochastic finite-player and mean-field games in discrete time and space, where players control transition probabilities over a finite time horizon. Similar settings have been explored in the literature. For instance, Saldi, Ba\c{s}ar and Raginsky \cite{SBR} examine a mean-field game on a general state space with a discounted infinite-horizon cost functional. Doncel, Gast and Gaujal \cite{doncel2019discrete} consider both discrete and continuous-time problems, but with finite action spaces. Our setup is closest to Gomes, Mohr and Souza \cite{GMS2} who study the exponential convergence of a discrete-time mean-field game to its stationary solution.

In Remarks \ref{rmk:correspondence-finite-player} and \ref{rmk:correspondence-MF-NLL}, we first clarify the connection between symmetric Markov perfect equilibria and the Nash-Lasry-Lions equation, both in the finite-player and the mean-field formulation. In the finite-player case, this relationship has been observed, see for example Gomes, Mohr and Souza \cite{GMS1} for a reference close to our setting. 

Next, we address the existence and uniqueness of MPE in our finite-player problem, see Theorem \ref{thm:symmetric-Nash-system-and-sMPE}. Existence follows from well-known fixed-point arguments, see for example Gomes, Mohr and Souza \cite{GMS2}, while our uniqueness proof relies on a contraction argument and uses the smallness of the time step. This result sheds light on the relationship with the continuous-time problem, which in the finite-player case becomes uniquely solvable, see \cite{GMS1}. In the mean-field, we obtain short-time results in Theorem \ref{thm:well-posedness-MF-NLL}, which can be made global under monotonicity assumptions. In this regard, our work is related to the pasting of equilibria by Dianetti, Nendel, Tangpi and Wang \cite{dianetti2024pasting}, who utilize a discrete-time master equation approach under Lasry-Lions monotonicity, see Remark \ref{rmk:monotonicity} below.

Finally, we present convergence results of the finite-player game in Theorems \ref{thm:N-convergence} and \ref{thm:continuous-convergence}, both as the number of players $N+1$ goes to infinity and as the time discretization $h$ goes to zero.  The former parallels the mean-field convergence obtained in \cite{GMS1} in continuous-time while in the latter limit we recover the uniquely solvable finite-player NLL equation in \cite{GMS1}.

\vspace{1em}
\textbf{Structure.} The rest of the paper is organized as follows. In Section \ref{sec:sym-finite-player} we investigate the symmetric finite-player problem and establish Theorem \ref{thm:symmetric-Nash-system-and-sMPE}. Section \ref{sec:mean-field-problem} discusses the mean-field problem and proves Proposition \ref{prop:MFG-NE and MF-NLL}. The short-time results recorded in Theorem \ref{thm:well-posedness-MF-NLL} for both finite-player and mean-field games are obtained in Section \ref{sec:short-time}. The mean-field limit $N\to\infty$, see Theorem \ref{thm:N-convergence}, is established in Section \ref{sec:mean-field-convergence}, and the continuous-time limit $h\downarrow0$, see Theorem \ref{thm:continuous-convergence}, is treated in Section \ref{sec:continuous-limit}. We recall some basic facts about dynamic programming in discrete time in Appendix \ref{app:dynamic-progamming} and prove a generalized conjugate correspondence theorem in Appendix \ref{app:conjugate-correspondence}.

\section{The symmetric finite-player problem}\label{sec:sym-finite-player}
This section proves Theorem \ref{thm:symmetric-Nash-system-and-sMPE}. While the proof of \eqref{thm:symmetric-Nash-system-and-sMPE-(i)} relies on results in dynamic programming and an application of Brouwer's fixed-point theorem, the analysis for part \eqref{thm:symmetric-Nash-system-and-sMPE-(ii)} makes crucial use of a Lipschitz property of the multinomial distribution presented in Lemmas \ref{lem:lipschitz-multinom-I}, \ref{lem:lipschitz-multinom-II}. 

\subsection{Existence}

We continue to establish the existence of symmetric MPE as claimed in Theorem \ref{thm:symmetric-Nash-system-and-sMPE} \eqref{thm:symmetric-Nash-system-and-sMPE-(i)} using a classical fixed-point argument.

\begin{proof}[Proof of Theorem \ref{thm:symmetric-Nash-system-and-sMPE} \eqref{thm:symmetric-Nash-system-and-sMPE-(i)}]
We define a map $\Phi:\sA_N\to\sA_N$ by the following procedure. For a given control $\beta\in\sA_N$ of the other players, let $\alpha=\alpha(\beta)\in\sA_N$ be the optimal unique feedback control of the tagged player. By Lemma \ref{lem:bellman}, $\alpha$ can be computed using the dynamic programming equation
$$
\begin{cases}
v^\beta(t,x,z)= \underset{a\in  A(h,x) }{\inf} \, H(x,z,E(x,z, v^\beta(t+h),\beta(t)),a),\\
\alpha(t,x,z) = \underset{a\in  A(h,x) }{\mathrm{arg\,min}} \ H(x,z,E(x,z, v^\beta(t+h),\beta(t)),a),\\
v^\beta(T,x,z)=g(x,z),
\end{cases} 
$$
by backward recursion. Then set $\Phi(\beta):= \alpha$. Using Berge's maximum theorem, it is easy to see that $\Phi$ is a continuous mapping. Since $\sA_N$ is a convex compact set, Brouwer's fixed-point theorem guarantees the existence of a fixed point. By Remark \ref{rmk:correspondence-finite-player}, any fixed point corresponds to a symmetric Markov perfect equilibrium.
\end{proof}

\subsection{Uniqueness}
By Remark \ref{rmk:correspondence-finite-player}, uniqueness of symmetric MPE is equivalent to uniqueness of solutions to \eqref{eq:N-NLL}. By (backward) induction, it is equivalent to uniqueness in one-step games. More precisely, it is sufficient to show that, for any $\varphi:\sX_N\to\R$, there exists a unique map $\hat \alpha:\sX_N\to\Sigma^{d-1}$ satisfying the fixed-point equation
$$
\hat \alpha(x,z) = \underset{a\in  A(h,x) }{\mathrm{arg\,min}} \ H(x,z,E(x,z,\varphi,\hat \alpha),a),\quad (x,z)\in\sX_N.
$$
We prove this using Banach's fixed-point theorem in Proposition \ref{prop:finite-player-one-step} below. To this end, recall from \eqref{eq:multinomial} that $Z'(x,z,\alpha)$ is defined as the sum of $d$ independent random variables $Z'(x,1,z,\alpha),\ldots,Z'(x,d,z,\alpha)$ with
$$
NZ'(x,y,z,\alpha) \sim \mathrm{Multinomial} (Nz_y,\alpha(y,z+e^N_{xy})),\quad y\in\cX.
$$

We now provide two Lipschitz estimates involving $Z'(x,z,\alpha)$.

\begin{Lem}\label{lem:lipschitz-multinom-I}
For any maps $\alpha,\tilde\alpha:\sX_N\to\Sigma^{d-1}$, we have
$$
\max_{(x,z)\in\sX_N} \E|Z'(x,z,\alpha)-Z'(x,z,\tilde \alpha)|_1 \leq  \max_{(x,z)\in\sX_N} |\alpha(x,z)-\tilde\alpha(x,z)|_1.
$$
\end{Lem}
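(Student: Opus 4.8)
The plan is to prove the estimate by realizing $Z'(x,z,\alpha)$ and $Z'(x,z,\tilde\alpha)$ jointly on one probability space through an explicit coupling and then bounding the coordinatewise discrepancy by a triangle inequality. Fix $(x,z)\in\sX_N$. Recalling the construction in \eqref{eq:multinomial}, for each $y\in\cX$ the variable $NZ'(x,y,z,\alpha)$ is the vector of type counts of $Nz_y$ independent draws from the categorical law $\alpha(y,z+e^N_{xy})$ on $\cX$; write this as $NZ'(x,y,z,\alpha)=\sum_{k=1}^{Nz_y}\xi^{y,k}$, where the $\xi^{y,k}$ are i.i.d.\ standard basis vectors with $\P[\xi^{y,k}=e_w]=\alpha_w(y,z+e^N_{xy})$, and similarly $NZ'(x,y,z,\tilde\alpha)=\sum_{k=1}^{Nz_y}\tilde\xi^{y,k}$ with marginal $\tilde\alpha(y,z+e^N_{xy})$.

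The first step is the categorical maximal-coupling fact: for probability vectors $p,q\in\Sigma^{d-1}$ there is a pair $(\xi,\tilde\xi)$ with these marginals and $\P[\xi\neq\tilde\xi]=\mathrm{TV}(p,q)=\tfrac12|p-q|_1$; since $\xi,\tilde\xi$ are standard basis vectors, on $\{\xi\neq\tilde\xi\}$ one has $|\xi-\tilde\xi|_1=2$, so $\E|\xi-\tilde\xi|_1=2\,\P[\xi\neq\tilde\xi]=|p-q|_1$. I would apply this coupling independently over all pairs $(y,k)$, taking $p=\alpha(y,z+e^N_{xy})$ and $q=\tilde\alpha(y,z+e^N_{xy})$, and then set $Z'(x,z,\alpha)=\tfrac1N\sum_{y}\sum_k\xi^{y,k}$ and $Z'(x,z,\tilde\alpha)=\tfrac1N\sum_{y}\sum_k\tilde\xi^{y,k}$. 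By construction each $\tfrac1N\sum_k\xi^{y,k}$ has the law of $Z'(x,y,z,\alpha)$, and independence across $y$ is preserved, so the synthesized sums have exactly the marginal laws of $Z'(x,z,\alpha)$ and $Z'(x,z,\tilde\alpha)$ from \eqref{eq:multinomial}.

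The remaining step is a short estimate: by the triangle inequality for $|\cdot|_1$,
$$N\,\bigl|Z'(x,z,\alpha)-Z'(x,z,\tilde\alpha)\bigr|_1\le\sum_{y\in\cX}\sum_{k=1}^{Nz_y}\bigl|\xi^{y,k}-\tilde\xi^{y,k}\bigr|_1,$$
and taking expectations gives $N\,\E|Z'(x,z,\alpha)-Z'(x,z,\tilde\alpha)|_1\le\sum_{y}Nz_y\,|\alpha(y,z+e^N_{xy})-\tilde\alpha(y,z+e^N_{xy})|_1$. Bounding each term by $\max_{(x',z')\in\sX_N}|\alpha(x',z')-\tilde\alpha(x',z')|_1$ and using $\sum_y z_y=1$, the right-hand side is at most $N\max_{(x',z')\in\sX_N}|\alpha(x',z')-\tilde\alpha(x',z')|_1$; dividing by $N$ and taking the maximum over $(x,z)$ yields the claim. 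Since the quantity $E(x,z,\varphi,\beta)$ used downstream depends only on the marginal law of $Z'(x,z,\beta)$, it is harmless that the displayed expectation is evaluated along this particular coupling.

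I do not expect a genuine obstacle: the only points needing care are the categorical maximal-coupling identity (standard) and verifying that the coupled sums reproduce the law of $Z'(x,z,\cdot)$; once those are in place the bound is a one-line triangle inequality combined with $\sum_y z_y=1$.
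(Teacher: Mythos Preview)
Your proof is correct and follows the same coupling strategy as the paper—realize each untagged player's transition under $\alpha$ and $\tilde\alpha$ on a shared probability space, apply the triangle inequality for $|\cdot|_1$, and conclude via $\sum_y z_y=1$—arriving at the identical intermediate bound $\sum_{y} z_y\,|\alpha(y,z+e^N_{xy})-\tilde\alpha(y,z+e^N_{xy})|_1$. The only difference is the coupling device: the paper works coordinate by coordinate, representing each $Z'_j$ as a sum of indicators $\chi_{[0,p_j]}(\xi^i_y)$ with shared uniforms, whereas you use the maximal coupling of the full categorical laws; your choice is arguably cleaner since it yields a bona fide joint coupling of the multinomial vectors in one stroke.
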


\begin{proof}
First note that if $\eta\sim\mathrm{Multinomial}(n,p)$ for some $n\geq1$, $p\in\Sigma^{d-1}$, then for every coordinate $j\in\cX$,
$$
\eta_j \overset{d}{=} \sum_{i=1}^n  \chi_{[0,p_j]}(\xi^i) \in \Z_+,
$$
where $\{\xi^i\}$ are independent $\mathrm{Uniform}(0,1)$ random variables. Now fix $j\in\cX$ and $(x,z)\in\sX_N$, let $\xi^i_y$, $1\leq i\leq N,\, y\in\cX$, be a collection of independent $\mathrm{Uniform}(0,1)$ random variables, and write
\begin{align*}
Z'_j(x,z,\alpha)-Z'_j(x,z,\tilde \alpha) &= \sum_{y\in\cX} (Z'_j(x,y,z,\alpha)-Z'_j(x,y,z,\tilde \alpha))\\
&\overset{d}{=} \frac1N \sum_{y\in\cX} \sum_{i=1}^{Nz_y}(\chi_{[0,\alpha_j(y,z+e^N_{xy})]} -  \chi_{[0,\tilde \alpha_j(y,z+e^N_{xy})]} 
)(\xi^i_y).
\end{align*}
Taking expectations,
\begin{align*}
\E|Z'(x,z,\alpha) -Z'(x,z,\tilde\alpha)|_1 &=\sum_{j\in\cX} \E|Z'_j(x,z,\alpha) -Z'_j(x,z,\tilde \alpha)|\\
&\leq\frac{1}{N} \sum_{j\in\cX}\sum_{y\in\cX}
 \sum_{i=1}^{Nz_y} \, \E|(\chi_{[0,\alpha_j(y,z+e^N_{xy})]} -  \chi_{[0,\tilde \alpha_j(y,z+e^N_{xy})]} 
)(\xi^i_y)|\\
&= \sum_{y\in\cX}  z_y|\alpha(y,z+e^N_{xy}) - \tilde \alpha(y,z+e^N_{xy})|_1\\
&\leq \Bigl(\sum_{y\in\cX} z_y \Bigr) \max_{(x,z)\in\sX_N} \, |\alpha(x,z)-\tilde\alpha(x,z)|_1 \\
&= \max_{(x,z)\in\sX_N} \, |\alpha(x,z)-\tilde\alpha(x,z)|_1.
\end{align*}
Taking the maximum over $(x,z)$ finishes the proof.
\end{proof}

\begin{Lem}\label{lem:lipschitz-multinom-II}
For any map $\alpha:\sX_N\to\Sigma^{d-1}$ and any $z,\tilde z\in\Sigma^{d-1}_N$, we have
$$
\max_{x\in\cX} \, \E|Z'(x, z,\alpha)-Z'(x,\tilde z,\alpha)|_1\leq  (1+\max_{x\in\cX} \, \Lip_1(\alpha(x,\cdot)) )\,|z-\tilde z|_1.
$$
\end{Lem}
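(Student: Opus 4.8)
The plan is to extend the coupling argument behind Lemma~\ref{lem:lipschitz-multinom-I}. Passing from $z$ to $\tilde z$ changes the law of $Z'(x,\cdot,\alpha)$ in two ways at once: the number of trials in the $y$-th multinomial block changes from $Nz_y$ to $N\tilde z_y$, and the transition probabilities $\alpha(y,z+e^N_{xy})$ used by the untagged players sitting in state $y$ change to $\alpha(y,\tilde z+e^N_{xy})$. I would disentangle these two effects by inserting an auxiliary random vector $W$ that keeps the $\tilde z$-number of trials but the $z$-transition probabilities, and then apply the triangle inequality
\[
\E|Z'(x,z,\alpha)-Z'(x,\tilde z,\alpha)|_1 \le \E|Z'(x,z,\alpha)-W|_1 + \E|W-Z'(x,\tilde z,\alpha)|_1 .
\]

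Fix $x\in\cX$ and realise all three vectors on one probability space carrying independent $\mathrm{Uniform}(0,1)$ variables $\{\xi^i_y\}_{y\in\cX,\,i\ge1}$, using (as in Lemma~\ref{lem:lipschitz-multinom-I}) the coordinatewise identity $\eta_j\overset{d}{=}\sum_{i=1}^n\chi_{[0,p_j]}(\xi^i)$ for $\eta\sim\mathrm{Multinomial}(n,p)$. Writing $p_y:=\alpha(y,z+e^N_{xy})$ and $q_y:=\alpha(y,\tilde z+e^N_{xy})$, I build $Z'(x,z,\alpha)$ from the blocks $(\xi^i_y)_{i\le Nz_y}$ with parameters $p_y$, build $W$ from $(\xi^i_y)_{i\le N\tilde z_y}$ with the \emph{same} parameters $p_y$, and build $Z'(x,\tilde z,\alpha)$ from $(\xi^i_y)_{i\le N\tilde z_y}$ with parameters $q_y$. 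For the first term, block by block the difference $Z'(x,z,\alpha)-W$ is $\pm\frac1N\sum_i(\chi_{[0,(p_y)_j]}(\xi^i_y))_j$ over the $|Nz_y-N\tilde z_y|$ surplus indices $i$; taking absolute values coordinatewise and using $\sum_j(p_y)_j=1$ gives $\E|Z'(x,z,\alpha)-W|_1\le\frac1N\sum_y|Nz_y-N\tilde z_y|=|z-\tilde z|_1$. For the second term, $W$ and $Z'(x,\tilde z,\alpha)$ share all trials, so the computation of Lemma~\ref{lem:lipschitz-multinom-I} applies verbatim and yields $\E|W-Z'(x,\tilde z,\alpha)|_1\le\sum_y\tilde z_y\,|p_y-q_y|_1$; since $(z+e^N_{xy})-(\tilde z+e^N_{xy})=z-\tilde z$, each $|p_y-q_y|_1\le\Lip_1(\alpha(y,\cdot))\,|z-\tilde z|_1\le\bigl(\max_{x'}\Lip_1(\alpha(x',\cdot))\bigr)|z-\tilde z|_1$, and $\sum_y\tilde z_y=1$. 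Adding the two estimates and taking the maximum over $x$ gives the claim.

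The only routine-but-delicate point is the bookkeeping with the representation $\chi_{[0,p_j]}(\xi)$: it reproduces the correct marginal law of each coordinate of a multinomial but not the joint law, so the whole argument must be run coordinate by coordinate and only then summed---exactly the care already taken in Lemma~\ref{lem:lipschitz-multinom-I}. The genuine conceptual step, and the main obstacle, is the insertion of the interpolating variable $W$, which is what makes it possible to bound the change of trial count and the change of transition probability separately, each by a quantity linear in $|z-\tilde z|_1$.
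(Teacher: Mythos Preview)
Your proof is correct and follows essentially the same coupling as the paper: both realise the multinomials via independent uniforms and separate the effect of changing the trial counts from the effect of changing the transition probabilities. The only cosmetic difference is that you package this separation through an explicit intermediate vector $W$ and the triangle inequality, whereas the paper splits each block directly into the common indices $i\le N(z_y\wedge\tilde z_y)$ (giving the Lipschitz term with weight $z_y\wedge\tilde z_y$ instead of your $\tilde z_y$) and the surplus indices (giving $|z-\tilde z|_1$); the resulting bounds are identical.
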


\begin{proof}
Using the same notation as in the proof of the previous lemma, for any $x\in\cX$,
\begin{align*}
\E|Z'(x, z,\,\alpha)-Z'(x,\tilde z,\alpha)|_1&= \sum_{j\in\cX} \E|Z'_j(x,z,\alpha)-Z'_j(x,\tilde z, \alpha)|\\
&\leq\frac1N \sum_{j\in\cX} \sum_{y\in\cX}\sum_{i=1}^{N(z_y\land\tilde z_y)} \E|(\chi_{[0,\alpha_j(y,z+ e^N_{xy} )]} -  \chi_{[0, \alpha_j(y,\tilde z+ e^N_{xy} )]})(\xi^i_y)|\\
& + \sum_{j\in\cX} \sum_{y\in\cX}\Bigl((z_y-\tilde z_y)^+ \,\alpha_j(y,z+ e^N_{xy} )+ ( z_y-\tilde z_y)^-\,\alpha_j(y,\tilde z+ e^N_{xy} )\Bigr)\\
&= \sum_{y\in\cX} (z_y\land \tilde z_y)  |\alpha(y,z+ e^N_{xy} )-\alpha(y,\tilde z+ e^N_{xy} )|_1+ |z-\tilde z|_1\\
&\leq \Bigl(1+ \sum_{y\in\cX} (z_y\land \tilde z_y) \Lip_1(\alpha(y,\cdot)) \Bigr)  |z-\tilde z|_1\\
&\leq  \, (1+\max_{y\in\cX} \Lip_1(\alpha(y,\cdot)) )\,|z-\tilde z|_1.
\end{align*}
Taking the maximum over $(x,z)$ again finishes the proof.
\end{proof}

We note the following lemma, whose proof is given in Appendix \ref{app:conjugate-correspondence}.

\begin{Lem}[Variant of the conjugate correspondence theorem] \label{lem:convex-conjugates}
Let $f:\R^d\to (-\infty,\infty]$ be a proper, closed and convex function. Let $C\subset\dom(f)$ be a non-empty, closed and convex set. Fix $p,q\in[1,\infty]$ with $1/p+1/q=1$ and assume that there exists a $\gamma>0$ such that  
\be\label{eq:lem-convex-conjugate}
f(x')\geq f(x)+\phi \cdot (x'-x) + \frac{\gamma}{2} |x'-x|_p|x'-x|_q
\ee
for any $x,x'\in C$ and $\phi\in\partial f(x)$. Set
$
\psi(y) = \mathrm{arg\,max} \{y\cdot x-f(x):x\in C\}$, $y\in\R^d$.
Then, $\psi$ is single-valued and is Lipschitz continuous with constant $1/\gamma$, with respect to both $|\cdot|_p$ and $|\cdot|_q$.
\end{Lem}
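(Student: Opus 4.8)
The plan is to reduce the statement to a standard fact about Legendre–Fenchel conjugates of strongly convex functions, modified to account for the mixed-norm strong convexity hypothesis \eqref{eq:lem-convex-conjugate} and the restriction to the convex set $C$. First I would observe that $\psi(y)$ is well-defined and single-valued: the map $x\mapsto y\cdot x-f(x)$ is upper semicontinuous (since $f$ is closed) and strictly concave on $C$ by \eqref{eq:lem-convex-conjugate} (the inequality with $x'\neq x$ forces strict concavity because $\gamma|x'-x|_p|x'-x|_q>0$), and $C$ is closed and convex; coercivity/attainment follows either from compactness of sublevel sets of $f$ restricted to $C$ or, more carefully, from the fact that $C\subseteq\dom(f)$ and the strong-convexity inequality forces superlinear growth of $f$ along $C$, so the supremum is attained at a unique point.

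Next I would establish the Lipschitz bound. Fix $y,\tilde y\in\R^d$ and write $x=\psi(y)$, $\tilde x=\psi(\tilde y)$. The first-order optimality conditions for a concave maximization over a convex set $C$ give $y-\phi\in N_C(x)$ for some $\phi\in\partial f(x)$, i.e.\ $(y-\phi)\cdot(\tilde x-x)\le 0$, and symmetrically $(\tilde y-\tilde\phi)\cdot(x-\tilde x)\le 0$ for some $\tilde\phi\in\partial f(\tilde x)$. Adding these,
\[
(y-\tilde y)\cdot(x-\tilde x)\ \ge\ (\phi-\tilde\phi)\cdot(x-\tilde x).
\]
Now apply \eqref{eq:lem-convex-conjugate} twice, at $x$ with $x'=\tilde x$ and at $\tilde x$ with $x'=x$, and add: the function values cancel and one gets $(\phi-\tilde\phi)\cdot(x-\tilde x)\ge 2\gamma|x-\tilde x|_p|x-\tilde x|_q$. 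Combining with the previous display and using Hölder's inequality $(y-\tilde y)\cdot(x-\tilde x)\le |y-\tilde y|_q|x-\tilde x|_p$ (and the symmetric version with $p,q$ swapped), I obtain
\[
|y-\tilde y|_q|x-\tilde x|_p\ \ge\ 2\gamma|x-\tilde x|_p|x-\tilde x|_q,
\]
so that $|x-\tilde x|_q\le \tfrac{1}{2\gamma}|y-\tilde y|_q$, and symmetrically $|x-\tilde x|_p\le \tfrac{1}{2\gamma}|y-\tilde y|_p$. Since $|\cdot|_p\le|\cdot|_q$ when $p\ge q$ (and vice versa), one concludes a Lipschitz constant of $1/\gamma$ (or even $1/(2\gamma)$) with respect to either norm; I would state it as $1/\gamma$ to match the lemma, or note that the factor $2$ is an artifact of applying the inequality at both base points and can be dropped depending on how the constant in \eqref{eq:asm-convex-cost} is normalized.

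The main subtlety — the step I expect to require the most care — is the interplay between the two norms in \eqref{eq:lem-convex-conjugate}: unlike the usual $|\cdot|_2^2$ case, the ``strong convexity'' term $\gamma|x'-x|_p|x'-x|_q$ is not a squared norm, so one must be careful that Hölder is applied with the matching conjugate exponents on each side and that adding the two instances of \eqref{eq:lem-convex-conjugate} indeed yields a clean lower bound with no leftover first-order terms. A secondary point is justifying attainment of the supremum defining $\psi$ when $C$ is unbounded; here I would invoke that \eqref{eq:lem-convex-conjugate} with fixed $x$ and $\phi$ gives $f(x')\ge f(x)+\phi\cdot(x'-x)+\gamma|x'-x|_p|x'-x|_q$, which grows superlinearly in $|x'-x|$ along any direction in $C$, hence $y\cdot x'-f(x')\to-\infty$ as $|x'|\to\infty$ within $C$, so the maximum is attained. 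Everything else is routine, and the full proof is deferred to Appendix \ref{app:conjugate-correspondence}.
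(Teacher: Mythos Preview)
Your approach is correct and essentially identical to the paper's: both arguments establish single-valuedness via strong convexity of $f+\delta_C$, extract the optimality condition $y\in\partial f(\psi(y))+N_C(\psi(y))$, apply \eqref{eq:lem-convex-conjugate} at both maximizers, combine with the normal-cone inequalities, and finish with H\"older. Your observation that the argument in fact yields the sharper Lipschitz constant $1/(2\gamma)$ is correct---the paper writes $\gamma$ rather than $2\gamma$ after adding the two strong-convexity inequalities, which is a harmless slip since only $1/\gamma$ is claimed.
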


We are now ready to prove existence and uniqueness for one-step games.  We recall the constant $\gamma$ from Assumption \ref{asm:convex-cost}, $S(x)$ and $m$ from Definition \ref{def:adm-controls-finite-player}, and the constant $L_{\partial\ell}$ from Assumption \ref{asm:lipschitz-cost}.

\begin{Prop}\label{prop:finite-player-one-step}
Let Assumptions \ref{asm:basic}  and \ref{asm:convex-cost} hold. Then, for any $\varphi:\sX_N\to\R$, there exists a unique function $\hat\alpha:\sX_N\to\Sigma^{d-1}$ that satisfies
$$
\hat \alpha(x,z) = \underset{a\in  A(h,x) }{\mathrm{arg\,min}} \, H(x,z,E(x,z,\varphi,\hat \alpha),a),\quad \forall (x,z)\in\sX_N,
$$
provided that $h<\gamma/L_\varphi$, where $L_\varphi:= m \max_{x\in\cX}\Lip_1(\varphi(x,\cdot))$.  In addition, let Assumption \ref{asm:lipschitz-cost} hold. Then,
$$
\max_{x\in\cX} \, \Lip_1(\hat\alpha(x,\cdot)) \leq \frac{h(L_{\partial\ell}+ L_\varphi)}{\gamma-h L_\varphi}.
$$
\end{Prop}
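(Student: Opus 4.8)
The plan is to set up the fixed-point equation for $\hat\alpha$ as a self-map on the complete metric space of functions $\sX_N\to\Sigma^{d-1}$ equipped with the sup-over-$(x,z)$ of the $|\cdot|_1$-distance, and to show it is a contraction when $h<\gamma/L_\varphi$. First I would fix $\varphi$ and, for a candidate $\beta:\sX_N\to\Sigma^{d-1}$, define $\Psi(\beta)(x,z) := \arg\min_{a\in A(h,x)} H(x,z,E(x,z,\varphi,\beta),a)$; note $E(x,z,\varphi,\beta)$ depends on $\beta$ only through the law of $Z'(x,z,\beta)$, which itself depends only on the values $\beta(y,z+e^N_{xy})$, $y\in\cX$. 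The minimization is $\arg\min_{a\in A(h,x)}\{\ell(x,z,a/h)h + a\cdot E(x,z,\varphi,\beta)\}$; since by Assumption \ref{asm:convex-cost} the map $a\mapsto \ell(x,z,a/h)h$ is, after the scaling $a\mapsto a/h$, strongly convex in the sense of \eqref{eq:lem-convex-conjugate} on the relevant affine slice (coordinates supported on $S(x)$, fixed total mass $1$), with modulus $\gamma/h$ on the $a$-scale, Lemma \ref{lem:convex-conjugates} applies: writing the problem as $\arg\max_{a\in A(h,x)}\{a\cdot(-E) - \ell(x,z,a/h)h\}$, the minimizer is single-valued and Lipschitz in the linear coefficient $-E(x,z,\varphi,\beta)$ with constant $h/\gamma$, in the $|\cdot|_1$ norm (taking $p=1$, $q=\infty$). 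This handles both the well-definedness of $\Psi$ and the quantitative bound.

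Next I would estimate how $E(x,z,\varphi,\beta)$ moves when $\beta$ changes. For the $y$-th coordinate, $E_y(x,z,\varphi,\beta)-E_y(x,z,\varphi,\tilde\beta) = \E[\varphi(y,Z'(x,z,\beta))]-\E[\varphi(y,Z'(x,z,\tilde\beta))]$. Here I would use that $\varphi(y,\cdot)$ has Lipschitz constant $\Lip_1(\varphi(y,\cdot))$ w.r.t.\ $|\cdot|_1$ to bound this by $\Lip_1(\varphi(y,\cdot))\,\E|Z'(x,z,\beta)-Z'(x,z,\tilde\beta)|_1$, but — and this is the one subtlety — the two random variables must be coupled on the same probability space; I would invoke exactly the coupling constructed in the proof of Lemma \ref{lem:lipschitz-multinom-I} (the uniform-variable representation of the multinomial), which yields $\E|Z'(x,z,\beta)-Z'(x,z,\tilde\beta)|_1 \le \max_{(x',z')}|\beta(x',z')-\tilde\beta(x',z')|_1$. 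Summing over the at most $m=|S(x)|$ relevant coordinates $y$ where $E$ can differ (and where the minimizer only sees coordinates in $S(x)$, so only those $y\in S(x)$ matter), I get $|E(x,z,\varphi,\beta)-E(x,z,\varphi,\tilde\beta)|_1 \le L_\varphi \,\|\beta-\tilde\beta\|$ with $L_\varphi = m\max_x\Lip_1(\varphi(x,\cdot))$ and $\|\cdot\|$ the sup-$|\cdot|_1$ norm. Combining with the $h/\gamma$-Lipschitz dependence of $\Psi(\beta)(x,z)$ on $-E$, I obtain $\|\Psi(\beta)-\Psi(\tilde\beta)\| \le (hL_\varphi/\gamma)\|\beta-\tilde\beta\|$, a contraction precisely when $h<\gamma/L_\varphi$; Banach's fixed-point theorem gives existence and uniqueness of $\hat\alpha$.

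For the Lipschitz bound on $\hat\alpha(x,\cdot)$ under the additional Assumption \ref{asm:lipschitz-cost}, I would differentiate the fixed-point identity in the $z$-variable, i.e.\ compare $\hat\alpha(x,z)$ and $\hat\alpha(x,\tilde z)$. Again via Lemma \ref{lem:convex-conjugates} ($h/\gamma$-Lipschitz in the coefficient, and also a piece from the $z$-dependence of $\ell(x,z,\cdot)$ through $L_{\partial\ell}$ via the subgradient-Lipschitz hypothesis), one has
\[
|\hat\alpha(x,z)-\hat\alpha(x,\tilde z)|_1 \le \frac{h}{\gamma}\Bigl(L_{\partial\ell}|z-\tilde z|_1 + |E(x,z,\varphi,\hat\alpha)-E(x,\tilde z,\varphi,\hat\alpha)|_1\Bigr).
\]
The first term inside comes from moving the cost's subgradient with $z$ (using the $\sum_{y\in S(x)}|\tilde w(y)-w(y)|\le L_{\partial\ell}|\tilde\mu-\mu|_1$ bound applied to the minimization), the second from moving $E$ with $z$. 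For the second term I split: the $z$-argument of $\varphi$ moves (controlled by Lemma \ref{lem:lipschitz-multinom-II}, giving $(1+\max_y\Lip_1(\hat\alpha(y,\cdot)))|z-\tilde z|_1$ after summing against $\max_y\Lip_1(\varphi(y,\cdot))$ over the $\le m$ coordinates, i.e.\ total $L_\varphi(1+\max_y\Lip_1(\hat\alpha(y,\cdot)))/m \cdot m$... ) — more precisely summing $\Lip_1(\varphi(y,\cdot))$ over $y\in S(x)$ and using $m$ bounds it by $L_\varphi(1 + \max_y\Lip_1(\hat\alpha(y,\cdot)))|z-\tilde z|_1$ — plus the part where $\hat\alpha$'s own $z$-argument moves (this is where $\Lip_1(\hat\alpha)$ reappears on the right). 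Setting $\Lambda := \max_x\Lip_1(\hat\alpha(x,\cdot))$, taking the max over $x$ and $z,\tilde z$ yields an inequality of the form $\Lambda \le \tfrac{h}{\gamma}\bigl(L_{\partial\ell} + L_\varphi(1+\Lambda)\bigr)$, which, since $hL_\varphi<\gamma$, rearranges to $\Lambda \le h(L_{\partial\ell}+L_\varphi)/(\gamma - hL_\varphi)$, as claimed. The main obstacle is bookkeeping the two distinct ways $z$ enters (directly in $\ell$ and $E$, and through the $z+e^N_{xy}$ argument of $\hat\alpha$ inside $Z'$) so that $\Lambda$ is isolated correctly on the left; everything else is a routine assembly of the three lemmas.
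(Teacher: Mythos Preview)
Your plan is correct and mirrors the paper's proof: contraction via Lemma~\ref{lem:convex-conjugates} and Lemma~\ref{lem:lipschitz-multinom-I} for existence and uniqueness, then Lemma~\ref{lem:lipschitz-multinom-II} together with the $L_{\partial\ell}$ hypothesis for the Lipschitz bound, yielding the self-referential inequality $\Lambda\le \tfrac{h}{\gamma}(L_{\partial\ell}+L_\varphi(1+\Lambda))$. The one refinement worth noting is that in the second part the paper does not invoke Lemma~\ref{lem:convex-conjugates} directly (that lemma only controls perturbations of the linear coefficient $-E$, not of the function $\ell(x,z,\cdot)$ itself), but instead returns to the strong-convexity inequality applied at $a=\hat\alpha(x,z)$, $\tilde a=\hat\alpha(x,\tilde z)$ and combines it with the first-order optimality condition $(w+E(x,z,\varphi,\hat\alpha))\cdot(\hat\alpha(x,\tilde z)-\hat\alpha(x,z))\ge 0$ for some $w\in\partial_a\ell(x,z,\hat\alpha(x,z)/h)$; this is how the subgradient difference $\tilde w-w$ (and hence $L_{\partial\ell}$) enters, producing exactly your displayed inequality.
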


\begin{proof}
Define
$$
\cS:=\{\alpha:\sX_N\to \Sigma^{d-1}\,:\,\alpha_y(x,z)=0\ \forall y\notin S(x), \, \forall (x,z)\in\sX_N\},
$$
For $\alpha,\tilde\alpha\in\cS$ set $d_\cS(\alpha,\tilde\alpha):=\max_{(x,z)\in\sX_N}|\alpha(x,z)-\tilde\alpha(x,z)|_1$,
$$
\Phi_{x,z}(\alpha):=\underset{a\in  A(h,x) }{\mathrm{arg\,min}} \, H(x,z,E(x,z,\varphi,\alpha),a) \quad \text{and}\quad \Phi(\alpha):= (\Phi_{x,z}(\alpha))_{(x,z)\in\sX_N}.
$$
Now let $(x,z)\in\sX_N$ be given. Define a function $f:\R^d\to[0,\infty]$ by $f(a):=\ell(x,z,a/h)h$ for $a\in[0,\infty)^d$ and $+\infty$ otherwise. Set $C:=A(h,x)$. By Assumptions \ref{asm:basic} and \ref{asm:convex-cost}, $f$ is a proper, closed and convex function. Using Assumption \ref{asm:convex-cost} and the fact that any element $a\in C$ satisfies $\sum_{y\in\cX}a_y=1$ and $a_y=0$ for all $y\notin S(x)$, we see that  
\be\label{eq:finite-player-one-step-(i)}
f(a')\geq f(a) + \phi\cdot (a'-a) + \frac{\gamma}{h} | a'-a|_1| a'-a|_\infty,\quad \forall (a, a')\in C\times C,\, \forall \phi\in\partial f(a).
\ee
Hence $f$ satisfies the conditions of Lemma \ref{lem:convex-conjugates}. Let $\alpha,\tilde\alpha\in \cS$ be given.
Observe that
$$
\Phi_{x,z}( \alpha)  =  \underset{a\in C }{\mathrm{arg\,max}} \, \Bigl\{\sum_{y\in S(x)}\! a_y \, (-\E[\varphi(y,Z'(x,z,\alpha))]) -f(a) \Bigr\}.
$$
By \eqref{eq:finite-player-one-step-(i)} and Lemma \ref{lem:convex-conjugates},
$$
|\Phi_{x,z}( \alpha)-\Phi_{x,z}(\tilde \alpha)|_1\leq \frac{h}{2\gamma}\sum_{y\in S(x)}\E|\varphi(y,Z'(x,z,\alpha))- \varphi(y,Z'(x,z,\tilde\alpha))|.
$$
Using Lemma \ref{lem:lipschitz-multinom-I},
\begin{align*}
\sum_{y\in S(x)} \E|\varphi(y,Z'(x,z,\alpha)) - \varphi(y,Z'(x,z,\tilde\alpha))| 
&\leq  \sum_{y\in S(x)} \Lip_1(\varphi(y,\cdot))\, \E|Z'(x,z,\alpha) - Z'(x,z,\tilde\alpha)|_1\\
&\leq m \, \max_{y\in\cX} \Lip_1(\varphi(y,\cdot)) \ \E|Z'(x,z,\alpha) - Z'(x,z,\tilde\alpha)|_1\\
&\leq  L_\varphi\, \max_{(x,z)\in\sX_N} |\alpha(x,z)-\tilde\alpha(x,z)|_1,
\end{align*}
with $L_\varphi= m \max_{y\in\cX}\Lip_1(\varphi(y,\cdot))$. We conclude that
$$
d_\cS(\Phi(\alpha),\Phi(\tilde\alpha)) \leq \frac{h}{2\gamma} \max_{(x,z)\in\sX_N} \sum_{y\in S(x)}\E|\varphi(y,Z'(x,z,\alpha))-\varphi(y,Z'(x,z,\tilde\alpha))|\leq \frac{L_\varphi h}{2\gamma}  d_\cS(\alpha,\tilde\alpha).
$$
Hence, $\Phi$ is a contraction since $h<\gamma/L_\varphi$, in which case Banach's fixed-point theorem guarantees  the uniqueness of a fixed point $\hat \alpha\in\cS$.

We proceed to establish an upper bound of the Lipschitz constant for $\hat\alpha(x,\cdot)$. Fix $z,\tilde z\in\Sigma^{d-1}_N$. By Assumption \ref{asm:convex-cost}, for any $a,\tilde a\in A(h,x)$ and $\tilde w\in\partial_a\ell(x,\tilde z,a/h)\subset\R^d$,
$$
\ell(x,\tilde z,\tilde a/h)h \geq \ell(x,\tilde z,a/h)h+\tilde w\cdot (\tilde a-a) + \frac{\gamma}{h}|\tilde a -a|_1\,|\tilde a-a|_\infty,
$$
which leads to
\begin{align*}
H(x,\tilde z,&\,E(x,\tilde z,\varphi,\hat \alpha), \tilde a)=
\ell(x,\tilde z,\tilde a/h)h + \tilde a \cdot E(x, \tilde  z,\varphi,\hat\alpha) \\
&\geq \ell(x,\tilde z,a/h)h + a\cdot E(x,\tilde z,\varphi,\hat\alpha) +[\tilde w +E(x, \tilde  z,\varphi,\hat\alpha)]\cdot (\tilde a-a)+\frac{\gamma}{h}|\tilde a -a|_1\,|\tilde a-a|_\infty \\
&= H(x,\tilde z,E(x, \tilde z,\varphi,\hat\alpha),a) + [\tilde w +E(x, \tilde  z,\varphi,\hat\alpha)]\cdot (\tilde a-a) + \frac{\gamma}{h} |\tilde a -a|_1\,|\tilde a -a|_\infty.
\end{align*}
Now we apply this to  $a=\hat\alpha(x,z)$ and $\tilde a=\hat\alpha(x,\tilde z)$. By the definition of $\hat\alpha(x,\tilde z)$ as the minimizer of $H(x,\tilde z,E(x,\tilde  z,\varphi,\hat \alpha),\cdot\,;\hat \alpha)$,
\begin{align*}
H(x,\tilde z,E(x,\tilde  z,\varphi,\hat \alpha),\hat\alpha(x,z)) &\geq H(x,\tilde z,E(x,\tilde  z,\varphi,\hat \alpha),\hat\alpha(x,\tilde z))\\
&\geq H(x,\tilde z,E(x,\tilde z,\varphi,\hat \alpha),\hat\alpha(x,z))  \\
&\quad +[\tilde w+ E(x,\tilde z,\varphi,\hat\alpha)] \cdot (\hat\alpha(x,\tilde z) - \hat \alpha(x,z)) \\
&\quad + \frac{\gamma}{h}|\hat\alpha(x,\tilde z) - \hat \alpha(x,z)|_1\, |\hat\alpha(x,\tilde z) - \hat \alpha(x,z)|_\infty.
\end{align*}
As $\hat\alpha(x,z)$ minimizes $H(x, z,E(x,z,\varphi,\hat \alpha),\,\cdot\,)$, by \cite[Corollary 3.68]{FOMO} there exists a $w\in\partial_a\ell(x,z,\hat\alpha(x,z)/h)$ such that 
$$
(w+ E(x,z,\varphi,\hat\alpha)) \cdot (\hat\alpha(x,\tilde z)-\hat\alpha(x,z)) \geq0.
$$
Combining the last two inequalities,
\begin{align*}
0&\geq [\tilde w+ E(x,\tilde z,\varphi,\hat \alpha)] \cdot (\hat\alpha(x,\tilde z) - \hat \alpha(x,z)) + \frac{\gamma}{h} |\hat\alpha(x,\tilde z) - \hat \alpha(x,z)|_1\, |\hat\alpha(x,\tilde z) - \hat\alpha(x,z)|_\infty\\
&\geq [\tilde w  -w + E(x,\tilde z,\varphi,\hat\alpha) -   E(x, z,\varphi,\hat\alpha) ] \cdot (\hat\alpha(x,\tilde z) - \hat \alpha(x,z)) +\frac{\gamma}{h}|\hat\alpha(x,\tilde z) - \hat \alpha(x,z)|_1\, |\hat\alpha(x,\tilde z) - \hat \alpha(x,z)|_\infty.
\end{align*}
By Lemma \ref{lem:lipschitz-multinom-II},
\begin{align*}
\sum_{y\in S(x)}\E| \varphi(y,Z'(x,\tilde z,\hat\alpha))- \varphi(y,Z'(x,z,\hat\alpha))| &\leq L_\varphi\ \E|Z'(x,\tilde z,\hat\alpha)-Z'(x,z,\hat\alpha)|_1\\
&\leq L_\varphi\ (1+\max_{x\in\cX} \Lip_1(\hat \alpha(x,\cdot)) )\,|\tilde z-z|_1.
\end{align*}
By H\"{o}lder's inequality,
\begin{align*}
\frac{\gamma}{h}|\hat\alpha(x,\tilde z) - \hat \alpha(x,z)|_1  &\leq \sum_{y\in S(x)} \bigl(|\tilde w(y)-w(y)| +
\E|\varphi(y,Z(x,\tilde z,\hat\alpha))-\varphi(y,Z(x,z,\hat\alpha))| \bigr)\\
&\leq \bigl(L_{\partial\ell} +  L_\varphi (1+\max_{x\in\cX} \, \Lip_1(\hat \alpha(x,\cdot))) \bigr) \,|\tilde z-z|_1.
\end{align*}
This implies
$$
(\gamma/h- L_\varphi) \max_{x\in\cX} \, \Lip_1(\hat\alpha(x,\cdot)) \leq L_{\partial\ell}+L_\varphi.
$$
Note that $\gamma/h-L_\varphi>0$ by the first part of the proof.
\end{proof}

Using Assumption \ref{asm:basic} \eqref{asm:basic-(iv)}, we know that there exists a $b_*<\infty$ such that the value function \eqref{eq:value-function-tagged-player} of the tagged player is bounded by $b_*(1+T)$, uniformly in the step size $h\in(0,\bar h)$ for any $\bar h<\infty$. Now note that any function $\varphi:\Sigma^{d-1}_N\to\R$ trivially satisfies
$$
|\varphi(\tilde z)-\varphi(z)|\leq 2N\|\varphi\|_\infty |\tilde z- z|_\infty, 
$$ 
so $\Lip_\infty (\varphi) \leq 2N\|\varphi\|_\infty.$ Together with $\Lip_1 (\varphi) \leq \Lip_\infty (\varphi) $ and Proposition \ref{prop:finite-player-one-step}, this implies the following result.

\begin{Prop}\label{prop:finite-player-h(N,T)}
Let Assumptions \ref{asm:basic} and \ref{asm:convex-cost} hold. Let $h>0$ be a time step and $T\in h\Z_+$ an arbitrary time horizon. If
$$
h< h_*(N,T):= \frac{\gamma}{2mNb_*(1+T)},
$$
then there exists a unique solution $v:\bar\cT\times\sX_N\to [0,\infty)$ to \eqref{eq:N-NLL} on $\bar\cT=\{0,h,2h,\ldots,T\}$.
\end{Prop}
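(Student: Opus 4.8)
The plan is to construct the solution of \eqref{eq:N-NLL} by a backward sweep over $\bar\cT$, invoking the one-step fixed-point result Proposition \ref{prop:finite-player-one-step} at each time slice, and to verify that its smallness hypothesis $h<\gamma/L_\varphi$ is met — with the $h$-independent threshold claimed in the statement — using the a priori bound on the tagged player's value function recorded just before the statement.

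Concretely, I would set $v(T,\cdot)=g$ and, assuming $v(t+h,\cdot):\sX_N\to[0,\infty)$ has already been built, apply Proposition \ref{prop:finite-player-one-step} with $\varphi=v(t+h,\cdot)$ to obtain the unique feedback $\alpha(t,\cdot)$ solving the one-step fixed-point equation, and then define $v(t,x,z):=H\bigl(x,z,E(x,z,v(t+h),\alpha(t)),\alpha(t,x,z)\bigr)$. A simple backward induction shows that at each stage of this recursion the partially constructed $v$ coincides, on the time steps where it is already defined, with the value function $v^{\alpha}$ of the tagged player's best response against the partially constructed profile $\alpha$ (extended arbitrarily within the admissible sets to an element of $\sA_N$): indeed, with the other players frozen to $\alpha$, the equations of \eqref{eq:N-NLL} are exactly the tagged player's dynamic programming equations, solved uniquely by backward recursion (Lemma \ref{lem:bellman}). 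Hence $\|v(t+h,\cdot)\|_\infty\le b_*(1+T)$ uniformly in $h\in(0,\bar h)$ by the a priori value-function bound noted before the statement. Combined with the elementary estimate $\Lip_1(\varphi(x,\cdot))\le\Lip_\infty(\varphi(x,\cdot))\le 2N\|\varphi(x,\cdot)\|_\infty$ for functions on $\Sigma^{d-1}_N$, this gives
\[
L_\varphi=m(d)\max_{x\in\cX}\Lip_1\bigl(v(t+h,x,\cdot)\bigr)\le 2m(d)N\,b_*(1+T),
\]
so $\gamma/L_\varphi\ge\gamma/\bigl(2m(d)Nb_*(1+T)\bigr)=h_*(N,T)$, and the hypothesis $h<h_*(N,T)$ forces $h<\gamma/L_\varphi$ (with the convention $\gamma/0:=+\infty$). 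Thus Proposition \ref{prop:finite-player-one-step} applies at every step, and the recursion produces a solution $(v,\alpha)$ of \eqref{eq:N-NLL} on $\bar\cT$, with $v$ nonnegative since it is the value function of a problem with nonnegative costs.

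For uniqueness I would run the same backward induction on an arbitrary solution $(\tilde v,\tilde\alpha)$ of \eqref{eq:N-NLL}: one has $\tilde v(T,\cdot)=g=v(T,\cdot)$, and if $\tilde v(s+h,\cdot)=v(s+h,\cdot)$, then — using $\|v(s+h,\cdot)\|_\infty\le b_*(1+T)$ to obtain the threshold exactly as above — the uniqueness part of Proposition \ref{prop:finite-player-one-step} forces $\tilde\alpha(s,\cdot)=\alpha(s,\cdot)$ and hence $\tilde v(s,\cdot)=v(s,\cdot)$; descending to $s=0$ gives $(\tilde v,\tilde\alpha)=(v,\alpha)$ on $\bar\cT$, which in particular yields uniqueness of $v$.

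The bulk of the work is already packaged in Proposition \ref{prop:finite-player-one-step} (a Banach fixed point built on the multinomial Lipschitz estimates of Lemmas \ref{lem:lipschitz-multinom-I}--\ref{lem:lipschitz-multinom-II}), so the only genuine subtlety here — and the reason the threshold carries the factor $(1+T)$ rather than an absolute constant — is keeping the bound on $\|v(t+h,\cdot)\|_\infty$ uniform in $h$: a naive term-by-term estimate along the backward recursion would only give a bound growing with the number of steps $T/h$, hence useless as $h\downarrow0$. Identifying each iterate with a genuine value function and invoking the $h$-uniform a priori bound from Assumption \ref{asm:basic}\,\eqref{asm:basic-(iii)} is precisely what removes this obstruction; everything else is bookkeeping.
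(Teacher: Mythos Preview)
Your proposal is correct and follows essentially the same route as the paper: the paper's proof is exactly the observation that the value-function bound $\|v^\beta\|_\infty\le b_*(1+T)$ together with the trivial lattice estimate $\Lip_1(\varphi)\le\Lip_\infty(\varphi)\le 2N\|\varphi\|_\infty$ on $\Sigma^{d-1}_N$ feeds into Proposition~\ref{prop:finite-player-one-step} at every backward step. Your write-up merely makes explicit what the paper leaves as a one-line implication, including the identification of each iterate with a genuine value function (which is the correspondence of Remark~\ref{rmk:correspondence-finite-player}) and the backward-induction uniqueness argument.
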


This establishes Theorem \ref{thm:symmetric-Nash-system-and-sMPE} \eqref{thm:symmetric-Nash-system-and-sMPE-(ii)}.  In Section \ref{sec:short-time} we will see that there are time steps uniform in $N$ that ensure uniqueness in short time. 

The following example of a two-player game illustrates the necessity of the small-$h$ condition to ensure the existence and uniqueness of a MPE. We will analyze the mean-field version of this example in Subsection \ref{ssec:MF-one-step-ex}.

\begin{Ex}\label{ex:N-one-step}
{\rm
For $N=1$ and a (sufficiently small) thermal noise level $\sigma^2\geq0$, set
$$
\cX=\{0,1\},\quad A(h,x)=\{(a_0,a_1)\in\Sigma^1:a_0,a_1\geq\sigma^2h\},\quad \ell(x,a) = \frac{1}{2}a_{x+1}^2,\quad g(x,z)=  \chi_{x\neq z},
$$
for $(x,z,a)\in\cX^2\times \Sigma^1$. Addition is understood modulo $2$, and we observe that the cost function $g$ encourages players to be in the same state at terminal time while $\ell$ penalizes changing one's current state. In this two-state game, we can identify controls $\alpha:\cX^2\to\Sigma^1$ with the probability of changing, i.e., write $\alpha(x,z)=\alpha_{x+1}(x,z)$. Then, admissible controls are vectors $\alpha(x,z)\in \tilde A(h)=:[\sigma^2h,1-\sigma^2h]$. Given a control $\beta(x,z)$ of the other player, the tagged player solves
$$
\inf_{\alpha:\,\cX^2\to \tilde A(h)} \alpha(x,z)^2/(2h)+\E^{\alpha\otimes\beta}[g(X_h,Z_h)\,|\,(X_0,Z_0)=(x,z)],
$$
for every $(x,z)\in\cX^2$. A direct computation shows that the NLL equation characterizing symmetric MPE reduces to the following system of equations:
$$
\begin{cases}
\alpha(0,0) = h[2\alpha(0,0)-1]\lor(\sigma^2h)\land (1-\sigma^2h),\\
\alpha(0,1) = h[1-2\alpha(1,0)]\lor(\sigma^2h)\land (1-\sigma^2h),\\
\alpha(1,0) = h[1-2\alpha(0,1)]\lor(\sigma^2h)\land (1-\sigma^2h),
\end{cases}
$$
and $\alpha(1,1)=\alpha(0,0)$. One easily observes the necessity for small $h$ to ensure the unique solvability of this set of equations. Indeed, if $\sigma^2=0$, then the first equation possesses multiple solutions for $h\geq1$, two of them being $\alpha(0,0)\in\{0,1\}$. If $\sigma^2>0$, another direct computation shows the existence of three solutions if $h\in(h_*,h^*)$ where $h_*:=(1+\sigma^2 -\sqrt{1-(6-\sigma^2) \sigma^2})/(4 \sigma^2)$ and $h^*:=(1+\sigma^2 +\sqrt{1-(6-\sigma^2) \sigma^2})/(4 \sigma^2)$. This assumes that $\sigma^2\leq 3-2\sqrt{2}$, so that the square root is well-defined.
}
\end{Ex}

\section{The mean-field problem} \label{sec:mean-field-problem}

\subsection{The mean-field game system}

We continue to establish the existence result in Proposition \ref{prop:MFG-NE and MF-NLL} \eqref{prop:MFG-NE and MF-NLL-(i)}. The proof of \ref{prop:MFG-NE and MF-NLL} \eqref{prop:MFG-NE and MF-NLL-(ii)} follows by inspection and hence we omit it.

\begin{proof}[Proof of Proposition \ref{prop:MFG-NE and MF-NLL} \eqref{prop:MFG-NE and MF-NLL-(i)}]
Set $\cS := (\Sigma^{d-1})^{\cT(t,T)}$ and define a map $\Phi:\cS\to \cS$ via the following procedure. For $\boldsymbol\mu\in \cS$, let $(v,\alpha)$ solve the dynamic programming equation for $t\leq s<T$:
$$
\begin{cases}
v(s,x)= \underset{a\in A(h,x)}{\inf} \ H(x,\mu_s,v(s+h),a),\\
\alpha(s,x) = \underset{a\in  A(h,x) }{\mathrm{arg\,min}} \ H(x,\mu_s,v(s+h),a),\\
v(T,x) = g(x,\mu_T).
\end{cases}
$$
Then, set
$$\Phi_{t}(\boldsymbol\mu):=\mu,\quad 
\Phi_{s+h}(\boldsymbol\mu) := \Bigl(\sum_{y\in\cX}\mu_s(y)\alpha_x(s,y) \Bigr)_{x\in\cX},\quad t\leq s<T.
$$
With this construction, it is clear that solutions to the \ref{eq:MFG-system} correspond to fixed points of the map $\Phi$, and hence by Remark \ref{rmk:MFG-system-correspondence}, correspond to mean-field game Nash equilibria.  Berge's maximum theorem implies that $\Phi$ is a continuous map. The claim follows by Brouwer's fixed-point theorem.
\end{proof}

\subsection{The mean-field Nash-Lasry-Lions equation}

We now discuss \eqref{eq:MF-NLL} and first recall that any solution to \eqref{eq:MF-NLL}, regardless of its regularity in the $\mu$ variable, defines a solution to the \ref{eq:MFG-system} for any initial condition $(t,\mu) \in \cT\times\Sigma^{d-1}$, and hence gives rise to solutions to the mean-field game, see Proposition \ref{prop:MFG-NE and MF-NLL} \eqref{prop:MFG-NE and MF-NLL-(ii)}. 

In Section \ref{sec:short-time}, we show the existence and uniqueness of the solution of \eqref{eq:MF-NLL} in short time. To this end, we again turn to the one-step problems. 

\begin{Prop}\label{prop:MF-one-step}
Let Assumptions \ref{asm:basic} and \ref{asm:convex-cost} hold. Then, for any Lipschitz continuous function $\varphi:\sX\to\R$, there exists a unique map $\hat\alpha:\sX\to\Sigma^{d-1}$ that satisfies
$$
\hat \alpha(x,\mu) = \underset{a\in  A(h,x) }{\mathrm{arg\,min}} \, H(x,\mu,\varphi(\cdot,\mu\cdot \hat \alpha(\cdot,\mu)),a),\quad \forall (x,\mu)\in\sX,
$$
provided that $h<\gamma/L_\varphi$, where $L_\varphi:=m\,\max_{x\in\cX}\Lip_1(\varphi(x,\cdot))$. In addition, let Assumption \ref{asm:lipschitz-cost} hold. Then, $\hat \alpha$ is Lipschitz continuous in $\mu$ with 
$$
\max_{x\in\cX} \, \Lip_1(\hat\alpha(x,\cdot)) \leq \frac{h(L_{\partial\ell}+L_\varphi)}{\gamma-hL_\varphi},
$$
where $\gamma$ and $L_{\partial\ell}$ are constants from Assumptions \ref{asm:convex-cost} and \ref{asm:lipschitz-cost}, respectively.
\end{Prop}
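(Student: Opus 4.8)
## Proof strategy for Proposition \ref{prop:MF-one-step}

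The plan is to mirror the structure of the proof of Proposition \ref{prop:finite-player-one-step}, replacing the role of the multinomial map $Z'(x,z,\alpha)$ with the deterministic push-forward $\mu\mapsto\mu\cdot\hat\alpha(\cdot,\mu)$. First, I would set up the fixed-point space: let $\cS$ be the set of maps $\alpha:\sX\to\Sigma^{d-1}$ with $\alpha(x,\mu)\in A(h,x)$, equipped with the norm $|\alpha|_\cS := \max_{(x,\mu)\in\sX}|\alpha(x,\mu)|_1$. Note this is a closed subset of a Banach space. Define the operator $\Phi$ by
$$
\Phi_{x,\mu}(\alpha) := \underset{a\in A(h,x)}{\mathrm{arg\,min}}\ H(x,\mu,\varphi(\cdot,\mu\cdot\alpha(\cdot,\mu)),a), \qquad \Phi(\alpha):=(\Phi_{x,\mu}(\alpha))_{(x,\mu)}.
$$
Assumption \ref{asm:basic}\eqref{asm:basic-(ii)} guarantees $\Phi$ is well-defined (single-valued), and Assumption \ref{asm:basic}(i) gives continuity, so $\Phi$ maps $\cS$ into itself.

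Second, I would establish the contraction estimate. Fixing $(x,\mu)$, define $f(a):=\ell(x,\mu,a/h)h$ on $[0,\infty)^d$ and $+\infty$ otherwise, with $C:=A(h,x)$; exactly as in the finite-player proof, Assumption \ref{asm:convex-cost} together with the simplex constraint and support constraint on $C$ yields the strong-convexity inequality $f(a')\geq f(a)+\phi\cdot(a'-a)+\tfrac{\gamma}{h}|a'-a|_1|a'-a|_\infty$ for $a,a'\in C$, $\phi\in\partial f(a)$. Then $\Phi_{x,\mu}(\alpha) = \mathrm{arg\,max}_{a\in\R^d}\{\sum_{y\in S(x)}a_y(-\varphi(y,\mu\cdot\alpha(\cdot,\mu)))-f(a)\}$, so Lemma \ref{lem:convex-conjugates} (with $p=1$, $q=\infty$) gives
$$
|\Phi_{x,\mu}(\alpha)-\Phi_{x,\mu}(\tilde\alpha)|_1 \leq \frac{h}{\gamma}\sum_{y\in S(x)}\big|\varphi(y,\mu\cdot\alpha(\cdot,\mu))-\varphi(y,\mu\cdot\tilde\alpha(\cdot,\mu))\big|.
$$
The new ingredient replacing Lemma \ref{lem:lipschitz-multinom-I} is the elementary bound $|\mu\cdot\alpha(\cdot,\mu)-\mu\cdot\tilde\alpha(\cdot,\mu)|_1 \leq \sum_{y\in\cX}\mu(y)|\alpha(y,\mu)-\tilde\alpha(y,\mu)|_1 \leq |\alpha-\tilde\alpha|_\cS$, since $\mu$ is a probability vector. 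Combining with $\sum_{y\in S(x)}\Lip_1(\varphi(y,\cdot))\leq L_\varphi$ gives $|\Phi(\alpha)-\Phi(\tilde\alpha)|_\cS \leq (L_\varphi h/\gamma)|\alpha-\tilde\alpha|_\cS$, a contraction when $h<\gamma/L_\varphi$; Banach's fixed-point theorem delivers the unique $\hat\alpha$.

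Third, for the Lipschitz bound in $\mu$ under Assumption \ref{asm:lipschitz-cost}, I would repeat verbatim the second half of the proof of Proposition \ref{prop:finite-player-one-step}: fix $\mu,\tilde\mu$, apply the strong-convexity inequality for $\ell(x,\tilde\mu,\cdot)$ with $a=\hat\alpha(x,\mu)$, $\tilde a=\hat\alpha(x,\tilde\mu)$, use that $\hat\alpha(x,\tilde\mu)$ minimizes $H(x,\tilde\mu,\varphi(\cdot,\tilde\mu\cdot\hat\alpha(\cdot,\tilde\mu)),\cdot)$ and that $\hat\alpha(x,\mu)$ minimizes the corresponding functional at $\mu$ (invoking \cite[Corollary 3.68]{FOMO} for a subgradient $w$ with $(w+\varphi(\cdot,\mu\cdot\hat\alpha(\cdot,\mu)))\cdot(\hat\alpha(x,\tilde\mu)-\hat\alpha(x,\mu))\geq 0$), and combine. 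The terms to control are $|\tilde w(y)-w(y)|$, handled by $L_{\partial\ell}$, and $|\varphi(y,\tilde\mu\cdot\hat\alpha(\cdot,\tilde\mu))-\varphi(y,\mu\cdot\hat\alpha(\cdot,\mu))|$, handled by writing $\tilde\mu\cdot\hat\alpha(\cdot,\tilde\mu)-\mu\cdot\hat\alpha(\cdot,\mu) = (\tilde\mu-\mu)\cdot\hat\alpha(\cdot,\tilde\mu) + \mu\cdot(\hat\alpha(\cdot,\tilde\mu)-\hat\alpha(\cdot,\mu))$, whose $|\cdot|_1$-norm is at most $(1+\max_x\Lip_1(\hat\alpha(x,\cdot)))|\tilde\mu-\mu|_1$ — this is the deterministic analogue of Lemma \ref{lem:lipschitz-multinom-II}. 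This yields $(\gamma/h-L_\varphi)\max_x\Lip_1(\hat\alpha(x,\cdot))\leq L_{\partial\ell}+L_\varphi$, which is the claimed bound since $\gamma/h-L_\varphi>0$.

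The main obstacle is essentially notational bookkeeping rather than a genuine mathematical difficulty: one must be careful that $\varphi$ is now evaluated at $\mu\cdot\hat\alpha(\cdot,\mu)$, so the "coupling" between the control and the point at which $\varphi$ is read off is nonlinear, and in the Lipschitz estimate one must correctly split the difference $\tilde\mu\cdot\hat\alpha(\cdot,\tilde\mu)-\mu\cdot\hat\alpha(\cdot,\mu)$ into a part coming from the change of base measure and a part coming from the change of control, exactly as Lemma \ref{lem:lipschitz-multinom-II} does in the stochastic setting. Since $\mu\mapsto\mu\cdot\beta$ is linear in $\mu$ for fixed $\beta$ and a contraction on the simplex, all the estimates go through with constant $1$ in place of the more delicate multinomial constants, so the mean-field proof is in fact slightly simpler than its finite-player counterpart.
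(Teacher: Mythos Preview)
Your proposal is correct and follows essentially the same route as the paper: set up a Banach fixed-point argument using Lemma \ref{lem:convex-conjugates} for the contraction, then mimic the second half of Proposition \ref{prop:finite-player-one-step} for the Lipschitz bound, with the elementary inequality $|\mu\cdot\alpha-\mu\cdot\tilde\alpha|_1\le\max_y|\alpha(y)-\tilde\alpha(y)|_1$ replacing Lemma \ref{lem:lipschitz-multinom-I} and the splitting $\tilde\mu\cdot\hat\alpha(\cdot,\tilde\mu)-\mu\cdot\hat\alpha(\cdot,\mu)=(\tilde\mu-\mu)\cdot\hat\alpha(\cdot,\tilde\mu)+\mu\cdot(\hat\alpha(\cdot,\tilde\mu)-\hat\alpha(\cdot,\mu))$ replacing Lemma \ref{lem:lipschitz-multinom-II}.

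The only organisational difference is that the paper first \emph{fixes} $\mu\in\Sigma^{d-1}$ and runs Banach's theorem in the finite-dimensional space $\cS=\{\alpha:\cX\to\Sigma^{d-1}:\alpha_y(x)=0\ \forall y\notin S(x)\}$, exploiting the fact that $\Phi_{x,\mu}(\alpha)$ depends on $\alpha$ only through $\alpha(\cdot,\mu)$; the fixed points are then collected over $\mu$. You instead work directly in the infinite-dimensional space of maps $\alpha:\sX\to\Sigma^{d-1}$. Both are valid and yield the same contraction constant $hL_\varphi/\gamma$; the paper's version is a touch cleaner because the decoupling in $\mu$ makes the finite-dimensional reduction natural, while yours has the minor advantage of giving uniqueness in the full function space in one stroke.
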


\begin{proof}
For any fixed $\mu\in\Sigma^{d-1}$, we consider the following finite-dimensional fixed-point problem. Define
$$
\cS:=\{\alpha:\cX\to \Sigma^{d-1}\,:\,\alpha_y(x)=0 \ \forall y\notin  S(x)\},\quad d_\cS(\alpha,\tilde\alpha) := \max_{x\in\cX} \sum_{y\in S(x)}|\alpha_y(x)-\tilde\alpha_y(x)|,\quad \alpha,\tilde\alpha\in\cS.
$$
Next, define $\Phi:\cS\to \cS$ by
$$
\Phi_x(\alpha) := \underset{ a\in  A(h,x) }{\mathrm{arg\,min}} \,H(x,\mu,\varphi(\cdot,\mu\cdot \alpha),a) ,\quad \text{and}\quad  \Phi(\alpha) :=(\Phi_x(\alpha))_{x\in\cX}.
$$
We first note that, if $\mu\in\Sigma^{d-1}$ is a row vector and $A\in\R^{d\times d}$, then $|\mu\cdot A|_1\leq \max_{1\leq i \leq d}|A_{i,:}|_1$.
Define a function $f:\R^d\to[0,\infty]$ by $f(a):=\ell(x,\mu,a/h)h$ for $a\in[0,\infty)^d$ and $+\infty$ otherwise and set $C:=A(h,x)$. Using Assumption \ref{asm:convex-cost}, we again see $f$ satisfies the conditions of Lemma \ref{lem:convex-conjugates}.
Observe that
$$
\Phi_{x}( \alpha)  =  \underset{a\in C}{\mathrm{arg\,max}} \, \Bigl\{\sum_{y\in S(x)}\! a_y \, (-\varphi(y,\mu\cdot\alpha)) -f(a) \Bigr\}.
$$
Analogously to the proof of Proposition \ref{prop:finite-player-one-step}, Lemma \ref{lem:convex-conjugates} implies 
$$
|\Phi_x(\alpha)-\Phi_x(\tilde \alpha)|_1 \leq \frac{h}{2\gamma} \sum_{y\in S(x)}|\varphi(y ,\mu\cdot\alpha)-\varphi(y,\mu\cdot\tilde \alpha)| \leq \frac{hL_\varphi}{2\gamma} \, |\mu\cdot(\alpha-\tilde \alpha)|_1 \leq \frac{hL_\varphi}{2\gamma} \, \max_{x\in\cX}|\alpha(x)-\tilde \alpha(x)|_1,
$$
so that $\Phi$ is a contraction since $h<\gamma/L_\varphi$, where $L_\varphi =m \max_{x\in\cX}\Lip_1(\varphi(x,\cdot))$.
By Banach's fixed-point theorem, there exists a unique fixed point in $\cS$. Since $\mu\in\Sigma^{d-1}$ was arbitrary, this construction yields a unique fixed point $\hat\alpha(x,\mu)$. The Lipschitz constant of $\hat\alpha$ can be deduced similarly to the finite-player case, see the proof of Proposition \ref{prop:finite-player-one-step}.
\end{proof}

We proceed with an explicit example that illustrates the necessity for the smallness condition on the time step.

\begin{Rmk}\label{rmk:monotonicity}
{\rm
Our paper does not assume any monotonicity conditions on the cost functions, and the next example shows that we cannot expect global well-posedness. Our existence and uniqueness result in Section \ref{sec:short-time} relies on the smallness of both the time step $h$ and the time horizon $T$. Global existence and uniqueness for \eqref{eq:MF-NLL} can for example be established under the Lasry-Lions monotonicity condition. This condition allows to construct a time-consistent family of Nash equilibria, which is related to the concept of pasting equilibria by Dianetti, Nendel, Tangpi and Wang \cite{dianetti2024pasting}. 
}
\end{Rmk}

\subsection{One-step example} \label{ssec:MF-one-step-ex}

This section presents an example of a mean-field game in which uniqueness is lost in one step when the time step $h$ exceeds a critical value.  For a (sufficiently small) thermal noise level $\sigma^2\geq0$, set  
$$
\cX=\{0,1\}, \quad \tilde A(h)=[\sigma^2h,1-\sigma^2h],\quad \ell(x,a) = \frac{1}{2}a^2,\quad g(x,\mu)=\mu\,\chi_{x=0}+ (1-\mu)\,\chi_{x=1},
$$
for $(x,a,\mu)\in\cX\times \tilde A(h)\times[0,1]$. We assume $\sigma^2h\leq1/2$, so that $\tilde A(h)\neq\emptyset$. As in the two-player game in Example \ref{ex:N-one-step}, addition on $\cX$ is defined modulo $2$ and we identify controls $\alpha(x)=(\alpha_0(x),\alpha_1(x))$ with the probability $\alpha_{x+1}(x)\in \tilde A(h)$ of \emph{changing} the current state. Similarly, we only keep track of the fraction $\mu\in[0,1]$ of the population in state $1$. Given a flow $\mu=(\mu_0,\mu_h)\in[0,1]^2$ of probabilities representing the fractions of players in state $1$, a typical player solves
$$
\inf_{\alpha:\,\cX\to \tilde A(h)} \alpha(x)^2/(2h)+\E^{\alpha}[g(X_h,\mu_h)\,|\,X_0=x],
$$
for every $x\in\cX$. The cost functional is chosen in a way that leads to synchronization with the population. Given the initial distribution $\mu_0$, mean-field game equilibria are characterized by the following system:
$$
\begin{cases}
\alpha(x) = \underset{a\in A(h)}{\mathrm{arg\,min}} \,\{a^2/(2h)+ (1-a) g(x,\mu_h)+a g(x+1,\mu_h)\},\quad x\in\cX,\\
\mu_h = \alpha(0)(1-\mu_0) + (1- \alpha(1))\mu_0.
\end{cases}
$$
Computing $\alpha$ in terms of $\mu_h$ leads to the following one-dimensional fixed-point problem for the distribution at time $t=h$:
$$
\mu_h = \alpha^{\mu_h}(0)(1-\mu_0)+(1-\alpha^{\mu_h}(1))\mu_0,
$$
with $\alpha^{\mu_h}(0):=[h(2\mu_h-1)]\lor(\sigma^2h)\land(1-\sigma^2h)$ and $\alpha^{\mu_h}(1):=[h(1-2\mu_h)]\lor(\sigma^2h)\land(1-\sigma^2h)$. This is a fixed-point equation for the Nash equilibrium distribution $\mu_h\in[0,1]$. If $\sigma^2=0$ and $h\geq1$, then there are three equilibria, $\mu_h\in\{0,1/2,1\}$, starting from the uniform distribution. If $\sigma^2>0$, we have the following result, which follows by inspection of the above fixed-point equation, see Figure \ref{fig:1-step-MFG-Kuramoto} for an illustration.

\begin{Lem}
Let $\mu_0=1/2$ and $0< \sigma^2< 3-2\sqrt{2}$. Define $h_*:=(1+\sigma^2 -\sqrt{1-(6-\sigma^2) \sigma^2})/(4 \sigma^2)$ and $h^*:=(1+\sigma^2 +\sqrt{1-(6-\sigma^2) \sigma^2})/(4 \sigma^2)$.  
\begin{enumerate}[(i)]
\item For any $h>0$, the uniform distribution $\mu_h=1/2$ is a Nash equilibrium.
\item If $h\in(0,h_*)\cup(h^*,1/(2\sigma^2))$, then the only NE is $\mu_h=1/2$.
\item If $h\in\{h_*,h^*\}$, then there are exactly three NE, including $\mu_h=1/2.$
\item If $h\in(h_*,h^*)$, then there are exactly five NE, including $\mu_h=1/2.$ 
\end{enumerate}
\end{Lem}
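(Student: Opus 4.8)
The plan is to reduce the search for Nash equilibria to counting fixed points of the one‑dimensional self‑map of $[0,1]$ already isolated above, and then to run an elementary piecewise‑affine analysis. Write $a(\mu):=\alpha^{\mu}(0)$ and $\tilde a(\mu):=\alpha^{\mu}(1)$ for the truncated best responses; since $\mu_0=1/2$, the equilibrium distributions at time $h$ are exactly the solutions $\mu\in[0,1]$ of $\mu=F(\mu)$ with $F(\mu):=\tfrac12\bigl(1+a(\mu)-\tilde a(\mu)\bigr)$. The first observation is a reflection symmetry: $\mu\mapsto1-\mu$ exchanges $a(\mu)\leftrightarrow\tilde a(\mu)$, whence $F(1-\mu)=1-F(\mu)$; thus $\mu^{*}$ is a fixed point iff $1-\mu^{*}$ is, and it suffices to count fixed points in $[1/2,1]$. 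Since $a(1/2)=\tilde a(1/2)$ one gets $F(1/2)=1/2$ for every $h>0$, which proves (i).

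Next I would make $F$ explicit on $[1/2,1]$. There $1-2\mu\le0<\sigma^{2}h$, so $\tilde a\equiv\sigma^{2}h$, while $a$ is the ramp $h(2\mu-1)$ truncated to $[\sigma^{2}h,1-\sigma^{2}h]$. Setting
\[
\mathfrak a:=\tfrac{1+\sigma^{2}}{2},\qquad \mathfrak b:=\tfrac12+\tfrac{1-\sigma^{2}h}{2h}\qquad\bigl(\text{so }\mathfrak a\le\mathfrak b\iff h\le 1/(2\sigma^{2})\bigr),
\]
the map $F$ is continuous and nondecreasing on $[1/2,1]$: it is $\equiv\tfrac12$ on $[1/2,\mathfrak a]$, equals $\tfrac12+\tfrac h2(2\mu-1-\sigma^{2})$ on $[\mathfrak a,\mathfrak b\wedge1]$, and is $\equiv1-\sigma^{2}h$ on $[\mathfrak b,1]$ when $\mathfrak b\le1$. (If $\mathfrak b>1$, i.e.\ $h<1/(1+\sigma^{2})$, the last plateau is absent; but then $h<1<h_{*}$, the middle slope is $h<1$ and $g(\mathfrak a)<0$, which will reduce that regime to case (ii).) On the middle piece $g:=F-\mathrm{id}$ is affine with slope $h-1$, $g(\mathfrak a)=-\sigma^{2}/2<0$, and a one‑line computation gives
\[
g(\mathfrak b)=1-\sigma^{2}h-\mathfrak b=-\frac{Q(h)}{2h},\qquad Q(h):=2\sigma^{2}h^{2}-(1+\sigma^{2})h+1 .
\]
The crucial arithmetic step is that $Q$ has discriminant $(1+\sigma^{2})^{2}-8\sigma^{2}=1-(6-\sigma^{2})\sigma^{2}$, which is positive precisely when $0<\sigma^{2}<3-2\sqrt2$, and its roots are exactly $h_{*}<h^{*}$; together with $Q(0)=1>0$ and the elementary inequalities $h_{*}>1$ (equivalently $1-3\sigma^{2}>\sqrt{1-6\sigma^{2}+\sigma^{4}}$) and $h^{*}<1/(2\sigma^{2})$ (equivalently $\sqrt{1-6\sigma^{2}+\sigma^{4}}<1-\sigma^{2}$), this yields $Q(h)>0$ on $(0,h_{*})\cup(h^{*},1/(2\sigma^{2}))$, $Q(h)=0$ at $h\in\{h_{*},h^{*}\}$, $Q(h)<0$ on $(h_{*},h^{*})$, and also $1-\sigma^{2}h>1/2$ throughout while $\mathfrak b<1$ whenever $h\ge h_{*}$.

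Finally I would count fixed points of $F$ in $[1/2,1]$ by the sign of $Q(h)$, using that on $[1/2,\mathfrak a]$ the only fixed point is $1/2$, that $g$ is affine on $[\mathfrak a,\mathfrak b\wedge1]$ with $g(\mathfrak a)<0$ and $g(\mathfrak b\wedge1)$ of sign $-\operatorname{sign}Q(h)$, and that on $[\mathfrak b,1]$ the function $g(\mu)=1-\sigma^{2}h-\mu$ is strictly decreasing with $g(\mathfrak b)=-Q(h)/(2h)$. If $Q(h)>0$ then $g<0$ on $[\mathfrak a,1]$, so $1/2$ is the only fixed point in $[1/2,1]$ and, by symmetry, $\mu_h=1/2$ is the unique NE: this is (ii). If $Q(h)=0$ then $g<0$ on $[\mathfrak a,\mathfrak b)$ and on $(\mathfrak b,1]$ while $g(\mathfrak b)=0$, so the fixed points in $[1/2,1]$ are $\{1/2,\mathfrak b\}$, giving the three NE $\{1/2,\mathfrak b,1-\mathfrak b\}$: this is (iii). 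If $Q(h)<0$ then $g$ vanishes exactly once on $(\mathfrak a,\mathfrak b)$, at $\mu_{1}=\tfrac12+\tfrac{h\sigma^{2}}{2(h-1)}$, and exactly once on $(\mathfrak b,1)$, at $\mu_{2}=1-\sigma^{2}h$, so the fixed points in $[1/2,1]$ are $\{1/2,\mu_{1},\mu_{2}\}$ and, by symmetry, the NE are $\{1/2,\mu_{1},\mu_{2},1-\mu_{1},1-\mu_{2}\}$, five of them: this is (iv). The only genuinely non‑mechanical point is the identification of the roots of $Q$ with $h_{*},h^{*}$ and the handful of auxiliary inequalities ($h_{*}>1$, $h^{*}<1/(2\sigma^{2})$, $\mathfrak b<1$ in the relevant range); everything else is bookkeeping of a monotone piecewise‑affine map crossing the diagonal.
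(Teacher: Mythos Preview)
Your argument is correct and is exactly the kind of piecewise-affine inspection the paper has in mind; the paper itself does not spell out a proof but simply states that the lemma ``follows by inspection of the above fixed-point equation'' with a reference to Figure~\ref{fig:1-step-MFG-Kuramoto}. Your identification of $Q(h)=2\sigma^2h^2-(1+\sigma^2)h+1$ with roots $h_*,h^*$, together with the symmetry $F(1-\mu)=1-F(\mu)$ and the sign analysis of $g=F-\mathrm{id}$ on the three affine pieces, is precisely what makes that inspection rigorous.
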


\begin{figure}[h]
    \begin{subfigure}{0.3\textwidth}
        \centering
        \includegraphics[width=.9\linewidth]{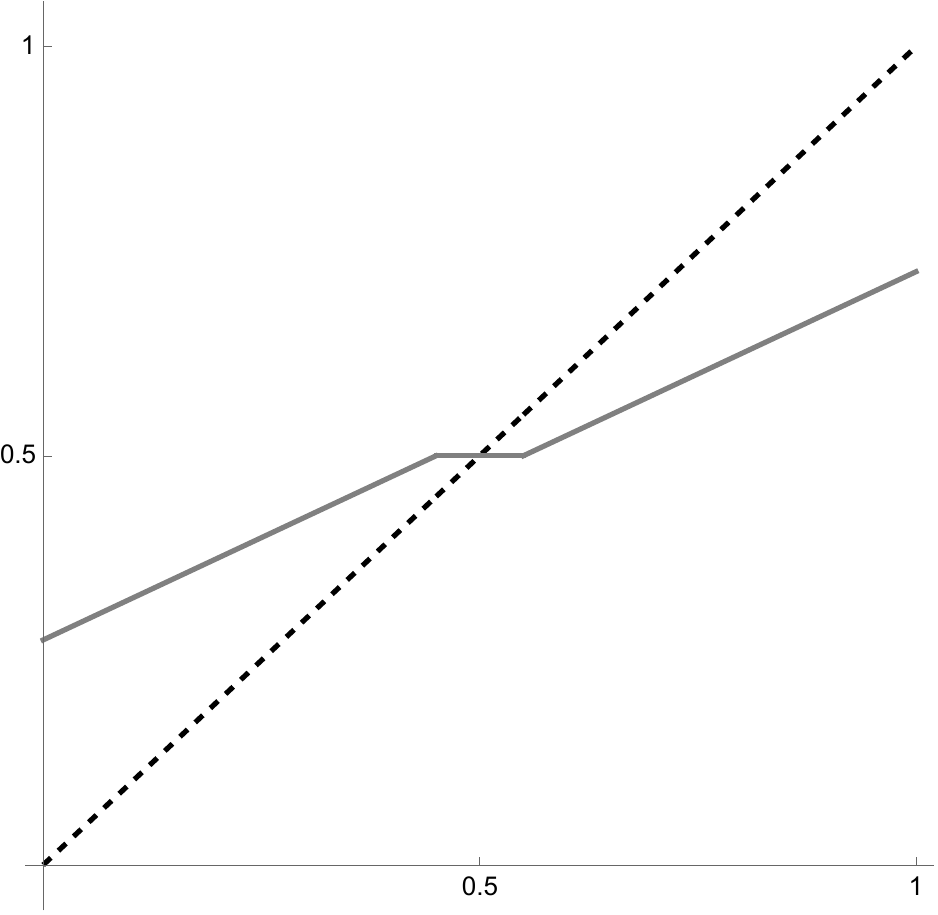}
        \caption{$h<h_*$}
    \end{subfigure}
    \hspace{5pt}
    \begin{subfigure}{0.3\textwidth}
        \centering
        \includegraphics[width=.9\linewidth]{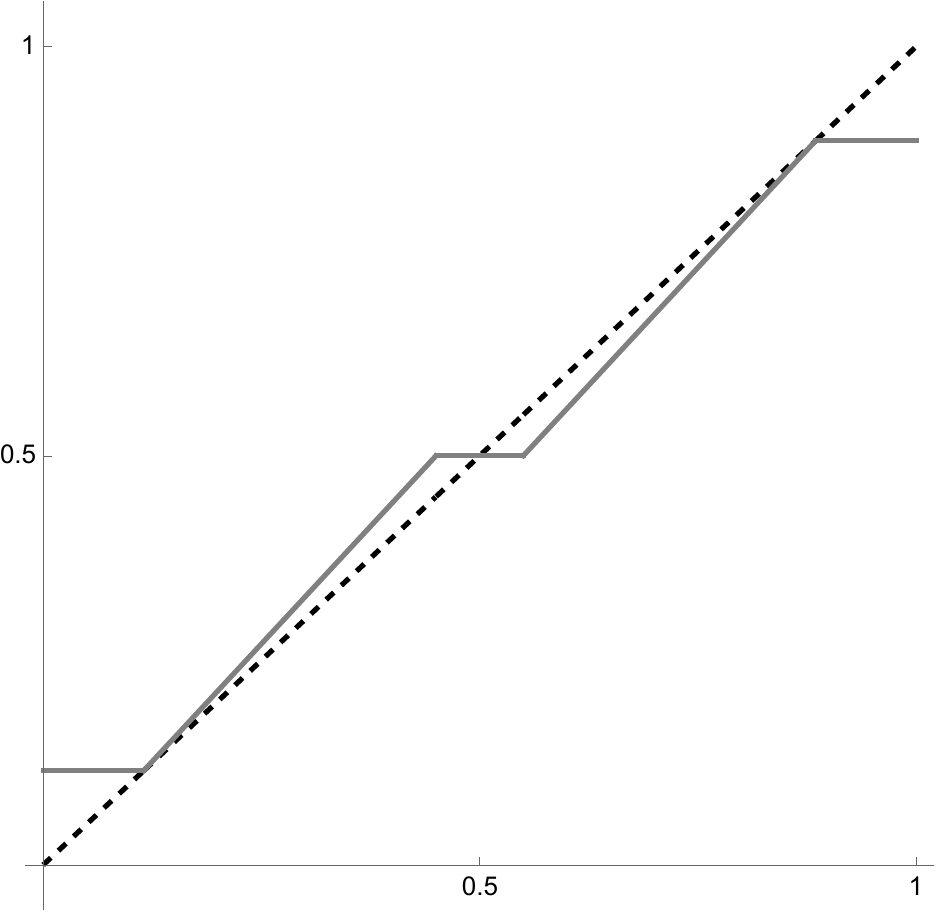}
        \caption{$h=h_*$}
    \end{subfigure}
    \hspace{5pt}
    \begin{subfigure}{0.3\textwidth}
        \centering
        \includegraphics[width=.9\linewidth]{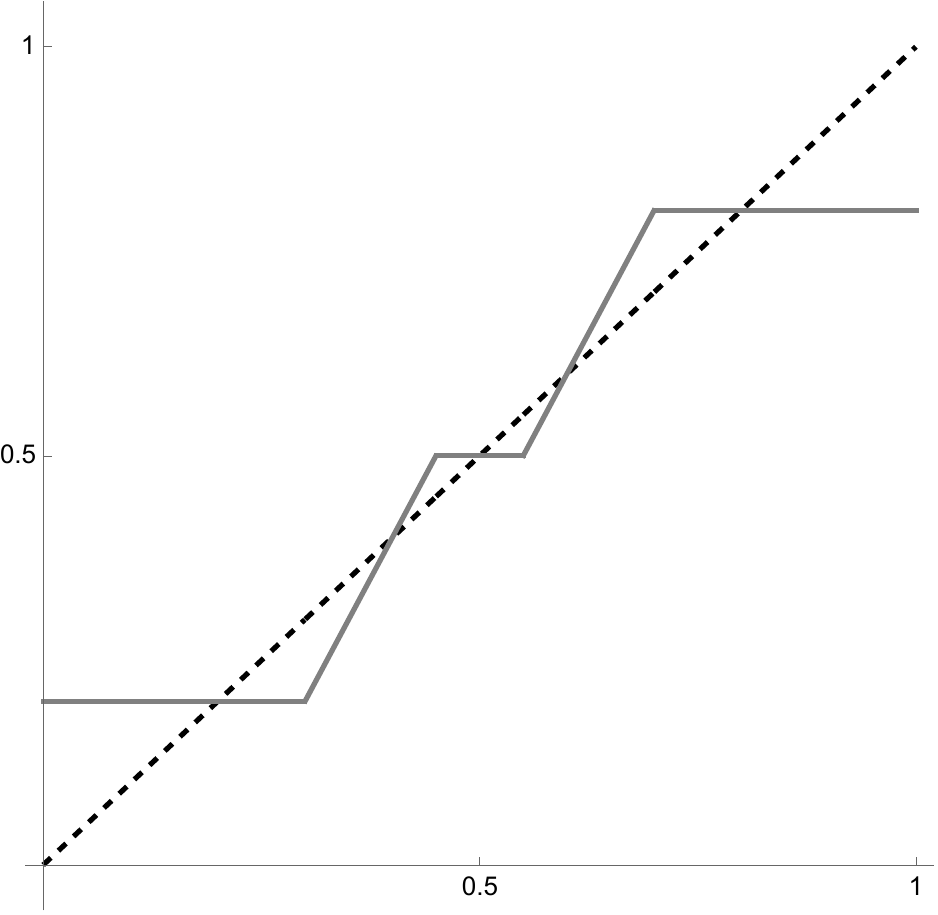}
        \caption{$h\in(h_*,h^*)$}
    \end{subfigure}
    \caption{Nash equilibria starting from $\mu_0=1/2$ when $0<\sigma^2<3-2\sqrt{2}$.}
    \label{fig:1-step-MFG-Kuramoto}
\end{figure}

Clearly, in any one-step mean-field game, solutions to the Nash-Lasry-Lions equation are in one-to-one correspondence with collections $(\mu_0,\mu_h^{\mu_0})_{\mu_0\in\Sigma^{d-1}}$ of Nash equilibria, parametrized by the initial condition $\mu_0$. This observation implies that, in the example above, for $h\in[h_*,h^*]$, uniqueness of solutions to  \eqref{eq:MF-NLL} is lost.

\section{Short-time results} \label{sec:short-time}

Theorem \ref{thm:symmetric-Nash-system-and-sMPE} establishes the existence and uniqueness of the solution to \eqref{eq:N-NLL} for a time step $h<h_*(N,T)$ that depends on both the number of players and the time horizon of the game. In fact, as stated in Theorem \ref{thm:well-posedness-MF-NLL}, there exist time steps that guarantee uniqueness for any finite number of players as well as for the mean-field limit. We proceed to establish this result.

\begin{Lem}
Let Assumptions \ref{asm:basic}, \ref{asm:convex-cost} and \ref{asm:lipschitz-cost} hold. 
\begin{enumerate}[(a)]
\item (Finite-player case). 
Let $\varphi:\sX_N\to\R$ be given. Let $\hat \alpha$ be as in Proposition \ref{prop:finite-player-one-step}, and define
$$
\psi(x,z) = H(x,z,E(x,z,\varphi,\hat \alpha),\hat\alpha(x,z)), \quad (x,z)\in\sX_N.
$$
\item (Mean-field case). 
Let $\varphi:\sX\to\R$ be given. Let $\hat \alpha$ be as in Proposition \ref{prop:MF-one-step}, and define 
$$
\psi(x,\mu)= H(x,\mu,\varphi(\cdot,\mu\cdot\hat\alpha(\cdot,\mu)),\hat\alpha(x,z)), \quad (x,\mu)\in\sX.
$$
\end{enumerate}
In both cases, we have 
\be\label{eq:Lipschitz-estimate-value-func}
L_{\psi} \leq hm\,L_\ell + L_{\varphi} \, \Bigl(1+ \frac{h(L_{\partial\ell}+L_{\varphi})}{\gamma - hL_\varphi}\Bigr),
\ee
where $L_{\psi}=m\,\max_{x\in\cX} \,\Lip_1(\psi(x,\cdot))$ and  $L_{\varphi}=m\,\max_{x\in\cX} \, \Lip_1(\varphi(x,\cdot))$.
\end{Lem}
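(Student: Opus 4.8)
The plan is to directly estimate the Lipschitz constant of $\psi(x,\cdot)$ by decomposing the increment $\psi(x,\zeta')-\psi(x,\zeta)$ (writing $\zeta$ for either $z$ in the finite-player case or $\mu$ in the mean-field case) into three pieces, exactly mirroring the structure of the Hamiltonian $H(x,\zeta,v,a)=\ell(x,\zeta,a/h)h+a\cdot v$ and the definition of $\psi$ as the optimal value $H$ evaluated at the fixed point $\hat\alpha$. The key point is that $\psi$ is an \emph{optimal value}, so the envelope/minimality property lets me avoid differentiating the argmin: when I perturb $\zeta$ to $\zeta'$, I can upper-bound $\psi(x,\zeta')$ by plugging the \emph{old} minimizer $\hat\alpha(x,\zeta)$ into $H(x,\zeta',E(x,\zeta',\varphi,\hat\alpha),\cdot)$ and, symmetrically, lower-bound it the other way; the cross terms then only involve how the data ($\ell$, $E$) depend on $\zeta$ directly, plus one unavoidable term coming from the $\zeta$-dependence of $\hat\alpha$ inside $E$.

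**First I would** carry out the one-sided comparison. By definition of $\hat\alpha(x,\zeta')$ as the minimizer,
$$
\psi(x,\zeta') = H\bigl(x,\zeta',E(x,\zeta',\varphi,\hat\alpha),\hat\alpha(x,\zeta')\bigr) \le H\bigl(x,\zeta',E(x,\zeta',\varphi,\hat\alpha),\hat\alpha(x,\zeta)\bigr),
$$
and the right-hand side minus $\psi(x,\zeta)=H(x,\zeta,E(x,\zeta,\varphi,\hat\alpha),\hat\alpha(x,\zeta))$ splits as
$$
\bigl[\ell(x,\zeta',\hat\alpha(x,\zeta)/h)-\ell(x,\zeta,\hat\alpha(x,\zeta)/h)\bigr]h \;+\; \hat\alpha(x,\zeta)\cdot\bigl[E(x,\zeta',\varphi,\hat\alpha)-E(x,\zeta,\varphi,\hat\alpha)\bigr].
$$
The first bracket is controlled by Assumption \ref{asm:lipschitz-cost} (the $L_\ell$ bound), picking up a factor $m$ because of the support restriction and the normalization by the $m\,\max\Lip_1$ convention in the definition of $L_\psi,L_\varphi$ — this is the $hmL_\ell$ term. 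For the second bracket, since $|\hat\alpha(x,\zeta)|_1=1$, it is at most $|E(x,\zeta',\varphi,\hat\alpha)-E(x,\zeta,\varphi,\hat\alpha)|_\infty$, and I expand $E(x,\zeta',\varphi,\hat\alpha)-E(x,\zeta,\varphi,\hat\alpha)$ by triangle inequality into a term where only the \emph{outer} argument of $\varphi$ moves (handled by $\Lip_1(\varphi)$ times the multinomial Lipschitz estimate, Lemma \ref{lem:lipschitz-multinom-II} in the finite-player case, or the analogous elementary bound $|\mu'\cdot A-\mu\cdot A|_1\le|\mu'-\mu|_1\max_i|A_{i,:}|_1$ in the mean-field case) and a term where $\hat\alpha$ itself moves inside $Z'$ (handled by $\Lip_1(\varphi)$ times Lemma \ref{lem:lipschitz-multinom-I}, or its mean-field analogue, times $\Lip_1(\hat\alpha)$). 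Doing the symmetric estimate for $\psi(x,\zeta)-\psi(x,\zeta')$ and combining gives
$$
|\psi(x,\zeta')-\psi(x,\zeta)| \le hL_\ell|\zeta'-\zeta|_1 \;+\; \max_{y}\Lip_1(\varphi(y,\cdot))\bigl(1+\max_x\Lip_1(\hat\alpha(x,\cdot))\bigr)|\zeta'-\zeta|_1.
$$
Multiplying through by $m$, recalling $L_\varphi=m\max_y\Lip_1(\varphi(y,\cdot))$ and then substituting the bound $\max_x\Lip_1(\hat\alpha(x,\cdot))\le h(L_{\partial\ell}+L_\varphi)/(\gamma-hL_\varphi)$ from Proposition \ref{prop:finite-player-one-step} (resp.\ Proposition \ref{prop:MF-one-step}) yields exactly \eqref{eq:Lipschitz-estimate-value-func}.

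**The main obstacle** I anticipate is purely bookkeeping rather than conceptual: keeping the $m$-factors and the support sets $S(x)$ consistent between the "$\max\Lip_1$" quantities and the normalized $L_\psi,L_\varphi$ defined with the leading $m$, since $E(x,\zeta,\varphi,\hat\alpha)\in\R^d$ carries a sum over $y\in\cX$ (really $y\in S(x)$) and one must be careful that the $|{\cdot}|_\infty$ versus $|{\cdot}|_1$ pairing in $\hat\alpha(x,\zeta)\cdot[\,\cdot\,]$ is applied so as not to lose an extra dimensional factor. A second, minor point is that the two cases (a) and (b) must be run through in parallel: in the finite-player case the measure perturbation $z\mapsto z'$ feeds into $Z'(x,z,\hat\alpha)$ through \emph{two} channels (the multinomial counts and the shifted argument $z+e^N_{xy}$ of $\hat\alpha$), both of which are already packaged into Lemma \ref{lem:lipschitz-multinom-II}; in the mean-field case one uses instead the pushforward $\mu\cdot\hat\alpha(\cdot,\mu)$, for which the same two channels appear as the $\mu$-weighting and the $\mu$-dependence of $\hat\alpha$, and the bound is the elementary row-stochastic estimate combined with $\Lip_1(\hat\alpha)$. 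Once these two Lipschitz inputs are stated in the unified form "$|Z'(x,\zeta',\hat\alpha)-Z'(x,\zeta,\hat\alpha)|$-type quantity $\le(1+\max_x\Lip_1(\hat\alpha(x,\cdot)))|\zeta'-\zeta|_1$", the remaining argument is identical in both cases, so I would write it once.
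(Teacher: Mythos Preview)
Your proposal is correct and follows essentially the same approach as the paper. The only superficial difference is that the paper invokes the standard inequality $|\inf_a f(a)-\inf_a g(a)|\le\sup_a|f(a)-g(a)|$ directly (producing a $\sup_{a\in A(h,x)}$ in the intermediate bound), whereas you spell out the underlying one-sided envelope argument; the resulting estimate, the use of Lemma~\ref{lem:lipschitz-multinom-II} (resp.\ the row-stochastic bound) to obtain the factor $(1+\max_x\Lip_1(\hat\alpha(x,\cdot)))$, and the final substitution of the $\hat\alpha$-Lipschitz bound from Propositions~\ref{prop:finite-player-one-step} and~\ref{prop:MF-one-step} are identical.
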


\begin{proof}
(a) By Lemma \ref{lem:lipschitz-multinom-II}, for $x\in\cX$ and $z,\tilde z\in\Sigma^{d-1}_N$,
\begin{align*}
|\psi(x, z)-\psi(x, \tilde z)| &\leq \sup_{a\in  A(h,x) } |(\ell(x,z,a/h)-\ell(x, \tilde z,a/h))h +  a\cdot (E(x,z,\varphi,\hat \alpha)-E(x, \tilde z,\varphi,\hat \alpha))|\\
&\leq \bigl[h L_\ell+ L_{\varphi}/m \, (1+ L_{\hat \alpha}) \bigr]\, |z-\tilde z|_1,
\end{align*}
where $L_{\hat \alpha} = \max_{x\in\cX}\,\Lip_1(\hat \alpha(x,\cdot))$. Using the Lipschitz estimate from Proposition \ref{prop:finite-player-one-step}, we obtain
$$
L_{\psi}\leq hm\, L_\ell+ L_{\varphi}  \, (1+L_{\hat\alpha}) \leq hm\, L_\ell+ L_{\varphi} \, \Bigl(1+ \frac{h(L_{\partial\ell}+ L_{\varphi})}{\gamma - h L_\varphi}\Bigr).
$$

(b) For $\mu,\tilde\mu\in\Sigma^{d-1}$ and $a\in A(h,x)$,
\begin{align*}
|a\cdot(\varphi(\cdot,\mu \cdot\hat \alpha (\cdot,\mu))-\varphi(\cdot,\tilde \mu \cdot\hat \alpha (\cdot,\tilde\mu)))|&\leq L_{\varphi}/m\,|\mu \cdot\hat \alpha (\cdot,\mu)-\tilde \mu \cdot\hat \alpha (\cdot,\tilde\mu)|_1\\
&=L_{\varphi}/m\, |(\mu-\tilde\mu)\cdot \hat \alpha (\cdot,\mu) + \tilde\mu\cdot( \hat \alpha (\cdot,\mu)-\hat \alpha(\cdot,\tilde\mu))|_1\\
&\leq L_{\varphi}/m(1+L_{\hat \alpha }) |\mu-\tilde \mu|_1,
\end{align*}
where $L_{ \hat \alpha} =\max_{x\in\cX} \,\Lip_1( \hat \alpha (x,\cdot))$.
Therefore,
\begin{align*}
|\psi(x,\mu)-\psi(x,\tilde \mu)| &\leq \sup_{a\in  A(h,x) } |(\ell(x,\mu,a/h)-\ell(x,\tilde \mu,a/h))h +  a\cdot(\varphi(\cdot,\mu \cdot\hat \alpha (\cdot,\mu))-\varphi(\cdot,\tilde \mu \cdot\hat \alpha (\cdot,\tilde\mu)))|\\
&\leq \bigl[h L_\ell+ L_{\varphi}/m(1+L_{ \hat \alpha })\bigr]\, |\mu-\tilde \mu|_1,
\end{align*}
and we complete the proof using the Lipschitz estimate from Proposition \ref{prop:MF-one-step}, analogous to (a).
\end{proof}

The following proposition establishes Theorem \ref{thm:well-posedness-MF-NLL}.

\begin{Prop}\label{prop:short-time-well-posedness}
Let Assumptions \ref{asm:basic}, \ref{asm:convex-cost} and \ref{asm:lipschitz-cost} hold. 
There exist constants $h_*,T_*>0$ depending only on the cost functions $\ell,g$ and $m$, such that, for all $h>0$ and $K\geq1$, the following holds: If $h<h_*$ and $Kh<T_*$, then there exists a unique solution $(v^{(\infty)},\alpha^{(\infty)})$ to \eqref{eq:MF-NLL} and a unique solution $(v^{(N)},\alpha^{(N)})$ to \eqref{eq:N-NLL} on $\bar\cT=\{0,\ldots,Kh\}$ for every $N\geq1$. Furthermore, 
\begin{equation}
    \label{eq:prop-short-time-well-posedness}
\sup_{N\in\N\cup\{\infty\}}\max_{(t,x)\in\bar\cT\times\cX} \, \big[\Lip_1(v^{(N)}(t,x,\cdot))+\Lip_1(\alpha^{(N)}(t,x,\cdot))\big]<\infty.
\end{equation}
\end{Prop}

\begin{proof}
\emph{Step 1.} We iterate the estimate \eqref{eq:Lipschitz-estimate-value-func}, which is valid for both the finite-player and the mean-field case. We reverse time, set $L(0):= m\,L_g := m\, \max_{x\in\cX}\,\Lip_1(g(x,\cdot))$, and recursively define
$$
L(k) = L(k-1)+ h \Bigl(m L_\ell +  L(k-1) \frac{L_{\partial\ell}+L(k-1)}{\gamma-h L(k-1)}\Bigr),\quad k\geq 1.
$$
Fix $M>L(0)$, $h\leq\gamma/M$ and set $\tilde M := mL_\ell + M(L_{\partial\ell}+M)/(\gamma-hM)$. We claim that, for any $K\geq0$ that satisfies $
Kh\leq (M-L(0))/\tilde M$, we have $L(k)\leq M$ for all $k=0,\ldots,K$. Indeed, we may proceed by induction on $K$. For $K=0$, we have $L(0)\leq M$ by assumption. Let $K\geq1$. If $L(0),\ldots,L(K-1)\leq M$, then 
\begin{align*}
L(K)&=L(0)+\sum_{k=1}^K (L(k)-L(k-1))\\
&\leq L(0)+ h \sum_{k=1}^K \Bigl(mL_\ell +  L(k-1) \frac{L_{\partial\ell}+L(k-1)}{\gamma-h L(k-1)}\Bigr)\leq L(0) + Kh\, \tilde M \leq M,
\end{align*}
where the first inequality follows from the induction hypothesis, the second from the choice of 
$h$ and $\tilde M$, and the third follows from the choice of the product $Kh$. \\

\emph{Step 2.} In light of the previous step, we may choose any $M>L(0)$ and define $h_*:=\gamma/M$ and $T_*:=(M-L(0))/\tilde M$. Note that, in particular, we have $h<\gamma/L(k)$ for all $k=0,\ldots,K-1$. By Propositions \ref{prop:finite-player-one-step} and \ref{prop:MF-one-step}, we can recursively solve the equations \eqref{eq:N-NLL} and \eqref{eq:MF-NLL} on $\{0,\ldots,Kh\}$.  Finally, note that the Lipschitz constants of the value functions are bounded by $M$, which implies a bound for the Lipschitz constants of the optimal controls as well.
\end{proof}

\section{Mean-field convergence} \label{sec:mean-field-convergence}

We proceed to establish Theorem \ref{thm:N-convergence}, which asserts the convergence of the solutions to \eqref{eq:N-NLL} to the solution of the mean-field equation \eqref{eq:MF-NLL} as $N\to\infty$ in short time. We start by recording the following consequence of the law of large numbers.

\begin{Lem}\label{lem:LLN}
Let $(\varphi^{(N)},\alpha^{(N)}):\sX\to\R\times\Sigma^{d-1}$, $N\geq1$, be a sequence of functions converging uniformly to some $(\varphi^{(\infty)},\alpha^{(\infty)}):\sX\to\R\times\Sigma^{d-1}$. Assume that $\sup_{N\geq1}\Lip(\varphi^{(N)}(x,\cdot))<\infty$ for all $x\in\cX$. Let $(z^{(N)})_{N\geq1}$ be a sequence with $z^{(N)}\in\Sigma^{d-1}_N$ converging to some $\mu\in\Sigma^{d-1}$. Then, for any $x\in\cX$,
$$
\E[\varphi^{(N)}(x,Z'(x,z^{(N)},\alpha^{(N)}))] \to \varphi^{(\infty)}\Big(x,\sum_{y\in\cX} \mu(y)\alpha^{(\infty)}(y,\mu)\Big),\quad \text{as} \quad N\to\infty.
$$
\end{Lem}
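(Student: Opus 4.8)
The plan is to express $\E[\varphi^{(N)}(x,Z'(x,z^{(N)},\alpha^{(N)}))]$ as the limit target plus three error terms that each vanish as $N\to\infty$: the discrepancy between $\varphi^{(N)}$ and $\varphi^{(\infty)}$, the fluctuation of the one-step empirical distribution $Z'(x,z^{(N)},\alpha^{(N)})$ about its mean, and the convergence of that mean to $\sum_{y\in\cX}\mu(y)\alpha^{(\infty)}(y,\mu)$.

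First I would reduce to the limiting cost. Since $\|\varphi^{(N)}-\varphi^{(\infty)}\|_\infty\to0$ on the compact set $\sX$, one has $\bigl|\E[\varphi^{(N)}(x,Z'(x,z^{(N)},\alpha^{(N)}))]-\E[\varphi^{(\infty)}(x,Z'(x,z^{(N)},\alpha^{(N)}))]\bigr|\le\|\varphi^{(N)}-\varphi^{(\infty)}\|_\infty\to0$. I would also record two elementary facts: $\varphi^{(\infty)}(x,\cdot)$ is Lipschitz with constant $L_x:=\sup_N\Lip(\varphi^{(N)}(x,\cdot))<\infty$, being a uniform limit of $L_x$-Lipschitz functions; and $\alpha^{(\infty)}$ is continuous, being a uniform limit of continuous functions.

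Next I would carry out the concentration step. Write $\bar z^{(N)}:=\E[Z'(x,z^{(N)},\alpha^{(N)})]=\sum_{y\in\cX}z^{(N)}_y\,\alpha^{(N)}(y,z^{(N)}+e^N_{xy})$. By the representation used in the proof of Lemma \ref{lem:lipschitz-multinom-I}, each coordinate $Z'_j(x,z^{(N)},\alpha^{(N)})$ is $\tfrac1N$ times a sum of $N$ independent Bernoulli variables, so $\mathrm{Var}(Z'_j)\le \tfrac{1}{4N}$; Jensen's inequality then gives $\E|Z'_j-\bar z^{(N)}_j|\le\tfrac{1}{2\sqrt N}$, and summing over the $d$ coordinates yields $\E\bigl|Z'(x,z^{(N)},\alpha^{(N)})-\bar z^{(N)}\bigr|_1\le\tfrac{d}{2\sqrt N}\to0$. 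Combined with the Lipschitz bound from the previous step, $\bigl|\E[\varphi^{(\infty)}(x,Z'(x,z^{(N)},\alpha^{(N)}))]-\varphi^{(\infty)}(x,\bar z^{(N)})\bigr|\le L_x\tfrac{d}{2\sqrt N}\to0$.

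Finally I would pass to the limit in the mean. Since $z^{(N)}\to\mu$, $e^N_{xy}=\tfrac1N(e_x-e_y)\to0$, and $\alpha^{(N)}\to\alpha^{(\infty)}$ uniformly with $\alpha^{(\infty)}$ continuous, each summand satisfies $z^{(N)}_y\,\alpha^{(N)}(y,z^{(N)}+e^N_{xy})\to\mu(y)\,\alpha^{(\infty)}(y,\mu)$, hence $\bar z^{(N)}\to\sum_{y\in\cX}\mu(y)\,\alpha^{(\infty)}(y,\mu)$; continuity of $\varphi^{(\infty)}(x,\cdot)$ then gives $\varphi^{(\infty)}(x,\bar z^{(N)})\to\varphi^{(\infty)}\bigl(x,\sum_{y\in\cX}\mu(y)\alpha^{(\infty)}(y,\mu)\bigr)$, and the claim follows by combining the three estimates with the triangle inequality. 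The only step that requires any care is the concentration estimate, but the variance bound there is immediate from the Bernoulli representation already established for Lemma \ref{lem:lipschitz-multinom-I}, so I expect no real obstacle.
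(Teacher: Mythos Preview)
Your proof is correct and close in spirit to the paper's, but you take a more elementary and quantitative route. The paper argues in two pieces: it invokes the (strong) law of large numbers, via the coupling with i.i.d.\ uniforms from Lemma~\ref{lem:lipschitz-multinom-I}, to obtain almost-sure convergence $Z'(x,z^{(N)},\alpha^{(N)})\to Z^{(\infty)}:=\sum_y\mu_y\alpha^{(\infty)}(y,\mu)$, then uses the uniform Lipschitz bound on $\varphi^{(N)}$ together with dominated convergence to conclude $\E|\varphi^{(N)}(x,Z')-\varphi^{(N)}(x,Z^{(\infty)})|\to0$, and only at the end switches $\varphi^{(N)}$ to $\varphi^{(\infty)}$. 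You instead switch to $\varphi^{(\infty)}$ first and then split $Z'$ into its mean $\bar z^{(N)}$ plus a fluctuation, controlling the latter by the explicit second-moment bound $\E|Z'-\bar z^{(N)}|_1\le d/(2\sqrt N)$ rather than an almost-sure statement. Your approach sidesteps the (slightly delicate) application of the strong law to a triangular array and even yields a rate for the concentration step; the paper's version is marginally shorter but leaves that triangular-array issue implicit. One small remark: your assertion that $\alpha^{(\infty)}$ is continuous presupposes that the $\alpha^{(N)}$ are continuous, which the lemma as stated does not assume---but this is equally implicit in the paper's proof, and it does hold in the intended application where the $\alpha^{(N)}$ are obtained by linear interpolation.
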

\begin{proof}
By the law of large numbers, as $N\to\infty$,
$$
Z'_j(x,z^{(N)},\alpha^{(N)}) = \sum_{y\in\cX} Z'_j(x,y,z^{(N)},\alpha^{(N)})\to\sum_{y\in\cX} \mu_y\, \alpha^{(\infty)}_j(y,\mu) =: Z^{(\infty)}_j,
$$
for each $1\leq j\leq d$ almost surely. We set $Z^{(\infty)}:=(Z^{(\infty)}_1,\ldots,Z^{(\infty)}_d)$. By the triangle inequality,
\begin{align*}
|\E[\varphi^{(N)}(x,Z'(x,z^{(N)},\alpha^{(N)}))] - \varphi^{(\infty)}(x,Z^{(\infty)})| &\leq \E|\varphi^{(N)}(x,Z'(x,z^{(N)},\alpha^{(N)}))-\varphi^{(N)}(x,Z^{(\infty)})|\\
&\quad + |\varphi^{(N)}(x,Z^{(\infty)})-\varphi^{(\infty)}(x,Z^{(\infty)})|.
\end{align*}
The second term converges to zero by assumption. The first term can be estimated as follows:
\begin{align*}
\E|\varphi^{(N)}(x,Z'(x,z^{(N)},\alpha^{(N)}))-\varphi^{(N)}(x,Z^{(\infty)})|&\leq\Lip(\varphi^{(N)}(x,\cdot)) \,\E|Z'(x,z^{(N)},\alpha^{(N)})-Z^{(\infty)}| \to 0.\qedhere
\end{align*}
\end{proof}

\begin{proof}[Proof of Theorem \ref{thm:N-convergence}]
\emph{Step 1.} Let $h_*,T_*$ be as in Proposition \ref{prop:short-time-well-posedness}. By \eqref{eq:prop-short-time-well-posedness}, the solutions $(v^{(N)},\alpha^{(N)})_{N\geq1}$ of \eqref{eq:N-NLL} form a uniformly bounded equicontinuous sequence of continuous functions from $\Sigma^{d-1}$ to $[0,\infty)^{\bar\cT\times \cX} \times (\Sigma^{d-1})^{\cT\times\cX}$. Here, we linearly interpolated in the $z$-variable. Hence, by the Arzel\`{a}-Ascoli theorem, there exists a $(\tilde v^{(\infty)}, \tilde \alpha^{(\infty)})$ such that, along a subsequence, $(v^{(N)},\alpha^{(N)})\to (\tilde v^{(\infty)},\tilde\alpha^{(\infty)})$ uniformly as $N\to\infty$. We replace the original sequence by this subsequence.

\emph{Step 2.} We now claim that 
$$
\tilde \alpha^{(\infty)}(t,x,\mu) = \underset{a\in  A(h,x) }{\mathrm{arg\,min}} \, H(x,\mu,\tilde v^{(\infty)}(t+h,\cdot,\mu\cdot \tilde\alpha^{(\infty)}(t,\cdot,\mu)),a),\quad (t,x,\mu)\in\cT\times\sX.
$$
Indeed, for every $N\geq1$ and $(x,z,a)\in\sX_N \times  A(h,x)$, we have
$$
H(x,z,E(x,z,v^{(N)}(t+h),\alpha^{(N)}(t)),\alpha^{(N)}(t,x,z))\leq H(x,z,E(x,z,v^{(N)}(t+h),\alpha_N(t)),a).
$$
For a given $\mu\in\Sigma^{d-1}$, choose $z^{(N)}\in\sX_N$ such that $z^{(N)}\to\mu$ as $N\to\infty$. By Lemma \ref{lem:LLN} and continuity of $\ell$, 
$$
H(x,\mu,\tilde v^{(\infty)}(t+h,\cdot,\mu\cdot\tilde\alpha^{(\infty)}(t,\cdot,\mu)),\tilde \alpha^{(\infty)}(t,x,\mu))\leq H(x,\mu,\tilde v^{(\infty)}(t+h,\cdot,\mu\cdot\tilde\alpha^{(\infty)}(t,\cdot,\mu)),a)
$$
for $(t,x,\mu,a)\in\cT\times\sX\times A(h,x)$. Hence, $(\tilde v^{(\infty)},\tilde \alpha^{(\infty)})$ is a solution of \eqref{eq:MF-NLL} on $\bar\cT$, which has a unique solution $(V,\alpha)$. This argument shows that any subsequence of $(v^{(N)},\alpha^{(N)})_{N\geq1}$ has a further subsequence that converges to $(V,\alpha)$ uniformly, hence the entire sequence converges to $(V,\alpha)$ uniformly.
\end{proof}

\section{Continuous-time limit}\label{sec:continuous-limit}

This section proves Theorem \ref{thm:continuous-convergence}. Recall the definitions of $S(x)$ and $A(h,x)$ in \eqref{eq:cts-limit-adm-prob}. We again set $m:=\max_{x\in\cX}|S(x)|$.

\begin{Lem}\label{lem:cts-time-limit}
Let $(\alpha^{(K)},\alpha^{(\infty)}):\sX_N\to \Sigma^{d-1}\times\R^d$, $K\geq1$, be functions such that $\mathfrak{a}^{(K)}/h^{(K)}\to \alpha^{(\infty)}$ as $K\to\infty$, where $\mathfrak{a}^{(K)}(x,z):=\alpha^{(K)}(x,z)-e_x$ for $(x,z)\in\sX_N$. Then, for all $(x,z,z')\in\sX_N\times\Sigma^{d-1}_N$,
$$
\lim_{K\to\infty}\frac{1}{h^{(K)}} \P[Z'(x,z,\alpha^{(K)})= z'] 
=
\begin{cases}
\infty,& z'=z,\\
Nz_y\, \alpha^{(\infty)}_w(y,z+e_{xy}),& \text{if} \ \exists w\neq y: z'=z+e_{wy}\\
0,&\text{otherwise},
\end{cases}
$$
and, in fact, $\lim_{K\to\infty }\P[Z'(x,z,\alpha^{(K)})= z]=1$.
\end{Lem}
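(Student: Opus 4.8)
The plan is to use that $Z'(x,z,\alpha^{(K)})$ is the empirical distribution of the new states of the $N$ mutually independent untagged players, and to expand the relevant probabilities according to how many of those players actually leave their current state. Write $n_y:=Nz_y$ for the number of players in state $y$ (so $\sum_{y\in\cX}n_y=N$), and, for each $y$ with $n_y\ge 1$, abbreviate $q^{(K)}_y:=\alpha^{(K)}(y,z+e^N_{xy})\in\Sigma^{d-1}$, the transition law used by every one of those players; thus such a player moves to a state $w$ with probability $q^{(K)}_{y,w}$ and stays at $y$ with probability $q^{(K)}_{y,y}=1+\mathfrak{a}^{(K)}_y(y,z+e^N_{xy})$. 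The hypothesis $\mathfrak{a}^{(K)}/h^{(K)}\to\alpha^{(\infty)}$, together with $h^{(K)}=T/K\to 0$, gives for $w\neq y$ that $q^{(K)}_{y,w}\ge 0$, $q^{(K)}_{y,w}/h^{(K)}\to\alpha^{(\infty)}_w(y,z+e^N_{xy})$, hence $q^{(K)}_{y,w}=O(h^{(K)})$ and $q^{(K)}_{y,y}\to 1$; in particular the probability that any one fixed player changes state is $\sum_{w\neq y}q^{(K)}_{y,w}=O(h^{(K)})$. All $O(\cdot)$ below refer to the limit $h^{(K)}\to 0$ with the triple $(x,z,z')$ held fixed.

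First I would isolate the event $R_K$ that at least two of the $N$ players change state; since the players move independently and each changes with probability $O(h^{(K)})$, a union bound over the $\binom{N}{2}$ pairs gives $\P[R_K]=O\bigl((h^{(K)})^2\bigr)$. On $R_K^{c}$ the empirical distribution is either unchanged (no move) or shifted by exactly $e^N_{wy}$ (one move, of a player from $y$ to some $w\neq y$), so $\{Z'(x,z,\alpha^{(K)})=z'\}\subset R_K$ unless $z'=z$ or $z'=z+e^N_{wy}$ for some $w\neq y$; this disposes of the third (``otherwise'') case, since then $\tfrac{1}{h^{(K)}}\P[Z'=z']=O(h^{(K)})\to 0$. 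For $z'=z$, the event contains the ``nobody moves'' event, of probability $\prod_{y\in\cX}(q^{(K)}_{y,y})^{n_y}\to 1$, and differs from it only by a subset of $R_K$; hence $\P[Z'=z]\to 1$ (the ``more precisely'' claim) and dividing by $h^{(K)}\to 0$ yields the limit $\infty$. For $z'=z+e^N_{wy}$, I first observe that $z'-z$ determines the ordered pair $(w,y)$ uniquely (from its two nonzero, $\pm\tfrac1N$, coordinates) and that on $R_K^{c}$ the only route to $z'$ is for exactly one of the $n_y$ players in state $y$ to jump to $w$ while all others stay; consequently $\P[Z'=z']=n_y\,q^{(K)}_{y,w}\,(q^{(K)}_{y,y})^{n_y-1}\prod_{y'\neq y}(q^{(K)}_{y',y'})^{n_{y'}}+O\bigl((h^{(K)})^2\bigr)$. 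Dividing by $h^{(K)}$, the finite product of stay-probabilities tends to $1$, the remainder vanishes, and $q^{(K)}_{y,w}/h^{(K)}\to\alpha^{(\infty)}_w(y,z+e^N_{xy})$, so $\tfrac{1}{h^{(K)}}\P[Z'=z']\to n_y\,\alpha^{(\infty)}_w(y,z+e^N_{xy})=Nz_y\,\alpha^{(\infty)}_w(y,z+e^N_{xy})$, as claimed.

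The only genuinely delicate point I anticipate is the combinatorial bookkeeping: verifying that on $R_K^{c}$ the reachable states are precisely $z$ and the neighbours $z+e^N_{wy}$, and that for $z'=z+e^N_{wy}$ the single-jump configuration is the unique non-negligible contribution (multi-jump cancellations such as $y\to u\to w$ do occur, but are absorbed into $R_K$ and contribute only $O((h^{(K)})^2)$). Granting this, the three cases follow from routine passages to the limit using $q^{(K)}_{y,w}/h^{(K)}\to\alpha^{(\infty)}_w(y,z+e^N_{xy})$ and $q^{(K)}_{y,y}\to 1$; no uniformity in $(x,z,z')$ is needed, as the triple is fixed throughout.
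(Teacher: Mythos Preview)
Your proposal is correct and takes a genuinely different route from the paper. The paper decomposes $Z'(x,z,\alpha^{(K)})$ into the independent multinomial blocks $Z'(x,y,z,\alpha^{(K)})$, one per initial state $y$, writes out the explicit multinomial formula for each block, classifies the three regimes (all stay, exactly one leaves, at least two leave) block by block, and then reassembles $\P[Z'=z']$ through the combinatorial identity $\P[Z'=z']=\sum_{s\in\Sigma(z')}\prod_{y}\P[Z'(x,y,z,\alpha^{(K)})=s(y)]$. Your argument instead works at the level of the $N$ individual players: you isolate the global event $R_K=\{\text{at least two players move}\}$, bound $\P[R_K]=O((h^{(K)})^2)$ by a pairwise union bound using independence, and observe that on $R_K^{c}$ only $z$ and its nearest neighbours $z+e^N_{wy}$ are reachable. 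This is shorter and sidesteps both the decomposition $z'=s(1)+\cdots+s(d)$ and the enumeration of $\Sigma(z')$; the paper's explicit multinomial computation, on the other hand, would more readily deliver higher-order asymptotics if they were ever needed. Both approaches ultimately rest on the same input $q^{(K)}_{y,w}/h^{(K)}\to\alpha^{(\infty)}_w(y,z+e^N_{xy})$ and $q^{(K)}_{y,y}\to 1$.
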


\begin{proof}
Fix $(x,z)\in\sX_N$. For any state $y\in\cX$ and any next-step distribution $s\in\Sigma^{d-1}_{Nz_y}$ of the untagged players in state $y$,
$$
\P[Z'(x,y,z,\alpha^{(K)})=s]
= \frac{(Nz_y)!}{(Ns_1)!\cdots (Ns_d)!}  \, [1+\mathfrak{a}_y^{(K)}(y,z+e_{xy})]^{Ns_y}\prod_{w\neq y}[\mathfrak{a}^{(K)}_w(y,z+e_{xy})]^{Ns_w} .
$$
By inspection of this formula, we deduce the following:
\begin{enumerate}[(1)]
\item if $s=z_ye_y$, then
$$
\P[Z'(x,y,z,\alpha^{(K)})=z_ye_y] = [1+\mathfrak{a}^{(K)}_y(y,z+e_{xy})]^{Nz_y} \to 1,\quad \text{as} \ K\to\infty,
$$
using $\mathfrak{a}^{(K)}_y(y,z+e_{xy})\to0$;
\item if there exists a $w\in\cX\setminus\{y\}$ such that $s=z_ye_y+e_{wy}$, then 
$$
\frac{1}{h^{(K)}} \P[Z'(x,y,z,\alpha^{(K)})=z_ye_y+e_{wy}]
= \frac{(Nz_y)!}{(Nz_y-1)!}   \, [1+\mathfrak{a}_y^{(K)}(y,z+e_{xy})]^{Nz_y-1}\frac{\mathfrak{a}^{(K)}_w(y,z+e_{xy})}{h^{(K)}},
$$
which converges to $Nz_y \,\alpha^{(\infty)}_w(y,z+e_{xy})$;
\item in all other cases, 
$$
\frac{1}{h^{(K)}}\P[Z'(x,y,z,\alpha^{(K)})=s]
= \frac{(Nz_y)!}{(Ns_1)!\cdots (Ns_d)!}  \, [1+\mathfrak{a}_y^{(K)}(y,z+e_{xy})]^{Ns_y}
\frac{1}{h^{(K)}}\prod_{i\neq y}[\mathfrak{a}^{(K)}_i(y,z+e_{xy})]^{Ns_i} 
$$
converges to zero as $\mathfrak{a}_i^{(K)}(y,z+e_{xy})/h^{(K)}$ stays bounded as $K\to\infty$ for all $i$ and there are at least two states $i\neq y$ present in the product.
\end{enumerate}

Next, let $z'\neq z$ in $\Sigma^{d-1}_N$ be given and set $\Sigma^{d-1}_0:=\{0_{\R^d}\}$. We let $\Sigma(z')$ denote the set of all $s=(s(1),\ldots,s(d))$ such that $s(j)\in\Sigma^{d-1}_{Nz_y}$ and $z' = s(1)+\cdots+s(d)$. Then, the claim follows from the previous observation, together with the explicit formula
\begin{align*}
\P[Z'(x,z,\alpha^{(K)})=z']  &=\P\Bigl[\sum_{y\in\cX} Z'(x,y,z,\alpha^{(K)})=z'\Bigr] =\sum_{s\in\Sigma(z')} \prod_{y\in\cX}\P[Z'(x,y,z,\alpha^{(K)})=s(y)].\qedhere
\end{align*}
\end{proof}

\begin{proof}[Proof of Theorem \ref{thm:continuous-convergence}]

For each $K\geq1$, set $h^{(K)}:=T/K$ and define the discretization $0=s_0^{(K)}<s_1^{(K)}<\cdots<s_k^{(K)}=T$ of the time interval $[0,T]$ by $s^{(K)}_k:=kT/K$. For each $K\geq1$, let $(v^{(K)},\alpha^{(K)})$ solve \eqref{eq:N-NLL}, which is uniquely defined for sufficiently large $K$, say $K\geq K_0$. We extend these functions to $[0,T]\times\sX_N$ by linear interpolation in the time variable and set 
$$
\mathfrak{a}^{(K)}(t,x,z) := \alpha^{(K)}(t,x,z)-e_x\in\R^d,\quad (t,x,z)\in [0,T]\times\sX_N.
$$ 
In the continuous-time problem, let $v_{\mathrm{cts}}:[0,T]\times\sX_N\to[0,\infty)$ be the unique solution to \eqref{eq:cts-time-finite-player-NLL} and let $\alpha_{\mathrm{cts}}(t,x,z)=\hat\alpha(x,z,\Delta_x v_{\mathrm{cts}}(t,\cdot,z))$.\\

\emph{Step 1.} We claim that $(v^{(K)},\mathfrak{a}^{(K)}/h^{(K)})_{K\geq K_0}$ is a uniformly bounded, equicontinuous family of functions, seen as functions from $[0,T]\times\sX_N$ to $[0,\infty)\times \R^d$.

\emph{Uniform boundedness.} By Assumption \ref{asm:basic}, $v^{(K)}$ are uniformly bounded, say, by some $c_0<\infty$. By definition, for $0\leq k<K$ and $(x,z)\in\sX_N$,
$$
\alpha^{(K)} (s^{(K)}_k,x,z)=  \underset{a\in A(h^{(K)},x)}{\mathrm{arg\,min}} \ \Big\{\ell(x,z,a/h^{(K)})h^{(K)} + \sum_{y\in S(x)} a(y) \E[v^{(K)}(s^{(K)}_{k+1},y,Z'(x,z,\alpha^{(K)}(s^{(K)}_k)))]\Big\}.
$$
Further, define
$$
\eta^{(K)}(x,z) :=  \underset{a\in A(h^{(K)},x)}{\mathrm{arg\,min}} H(x,z,\vec{0},a)=  \underset{a\in A(h^{(K)},x)}{\mathrm{arg\,min}} \ell(x,z,a/h^{(K)}),\quad (x,z)\in\sX_N.
$$
By Lemma \ref{lem:convex-conjugates}, 
$$
\frac{1}{h^{(K)}} |\alpha^{(K)}(s^{(K)}_k,x,z)-\eta^{(K)}(x,z)|_1\leq \frac{1}{2\gamma} \, \sum_{y\in S(x)} \E|v^{(K)}(s^{(K)}_{k+1}, y, Z'(x,z,\alpha^{(K)}(s^{(K)}_k)))|\leq \frac{mc_0}{2\gamma}.
$$
By the definition of $\eta^{(K)}(x,z)$ as the minimizer of $H(x,z,\vec0,\cdot)$ over $A(h^{(K)},x)$, 
$
\ell(x,z,\eta^{(K)}(x,z)/h^{(K)}) \leq \ell (x,z,\bar a(x)),
$
for $\bar a(x)\in\R^d$ which can be chosen by $a(x)_y=0$ for $y\notin S(x)\cup\{x\}$ and $a(x)_y=\sigma^2$ for $y\in S(x)\setminus\{x\}$. Note that by Assumption \ref{asm:basic}, the running cost does not depend on $a(x)_x$. Now observe that $\ell(x,z,\cdot)$ is a strongly convex function on $\R^{\cX\setminus\{x\}}\cong\R^{d-1}$ and hence coercive. This implies boundedness of $(|\eta^{(K)}_{-x}(x,z)|_1/h^{(K)})_{K\geq1}$. Using $|a_{-x}-e_x|_1=2|a_{-x}|_1$ for $a\in\Sigma^{d-1}$, we see that $(|\eta^{(K)}(x,z)-e_x|_1/h^{(K)})_{K\geq1}$ is bounded by some $\tilde c_0<\infty$. Putting these two estimates together, we arrive at
$$
\sup_{K\geq K_0} \max_{0\leq k<K}\max_{(x,z)\in\sX_N}\frac{1}{h^{(K)}}|\alpha^{(K)}(s^{(K)}_k,x,z)-e_x|_1 \leq \frac{mc_0}{2\gamma}+\tilde c_0 =: c_1<\infty.
$$
Note that $(|\alpha^{(K)}(t,x,z)|_1/h^{(K)})_{K\geq K_0}$ is not a bounded sequence.

\emph{Equicontinuity.} Let us first observe that, by the same reasoning as in Lemma \ref{lem:cts-time-limit}, the uniform boundedness of $(\mathfrak{a}^{(K)}/h^{(K)})_{K\geq K_0}$ implies that
\begin{align*}
&\max_{0\leq k<K} \, \frac{1}{h^{(K)}}\,\P^{\alpha^{(K)}\otimes\alpha^{(K)}}[(X_{s^{(K)}_{k+1}},Z_{s^{(K)}_{k+1}})=(x',z')\,|\,(X_{s^{(K)}_{k}},Z_{s^{(K)}_{k}})=(x,z)]\\
&\qquad =\max_{0\leq k<K} \,   \frac{1}{h^{(K)}}\, \alpha^{(K)}_{x'}(s^{(K)}_k,x,z) \,\P[Z'(x,z,\alpha^{(K)}(s^{(K)}_k))=z']
\end{align*}
is bounded, say by some $c_2<\infty$, whenever $(x',z')\neq (x,z)$. Hence, for $0\leq k<K$ and $(x,z)\in\sX_N$,
\begin{align*}
&|v^{(K)}(s^{(K)}_{k+1},x,z)-v^{(K)}(s^{(K)}_k,x,z)|\\
&\quad \leq |\E^{\alpha^{(K)}\otimes\alpha^{(K)}}[v^{(K)}(s^{(K)}_{k+1},X_{s^{(K)}_{k+1}},Z_{s^{(K)}_{k+1}})]-v^{(K)}(s^{(K)}_k,x,z)|  \\
&\quad + |\E^{\alpha^{(K)}\otimes\alpha^{(K)}}[v^{(K)}(s^{(K)}_{k+1},X_{s^{(K)}_{k+1}},Z_{s^{(K)}_{k+1}})]-v^{(K)}(s^{(K)}_{k+1},x,z)| \\
&\quad= |\E^{\alpha^{(K)}\otimes\alpha^{(K)}}[v^{(K)}(s^{(K)}_{k+1},X_{s^{(K)}_{k+1}},Z_{s^{(K)}_{k+1}})]-v^{(K)}(s^{(K)}_k,x,z)| \\
&\quad+ \!\!\!\!\! \sum_{(x',z')\neq(x,z)} \!\!\!\!\!\!\! \P^{\alpha^{(K)}\otimes\alpha^{(K)}}[(X_{s^{(K)}_{k+1}},Z_{s^{(K)}_{k+1}})=(x',z')\,|\,(X_{s^{(K)}_{k}},Z_{s^{(K)}_{k}})=(x,z)] \, |v^{(K)}(s^{(K)}_{k+1},x',z')-v^{(K)}(s^{(K)}_{k+1},x,z)|\\
&\quad\leq |\ell(x,z,\alpha^{(K)}(s_k^{(K)},x,z)/h^{(K)})|h^{(K)} + 2c_0c_2\, h^{(K)} \leq (\sup_{D} |\ell(x,z,\cdot)|+2c_0c_2) \, h^{(K)} =: c_3 h^{(K)}.
\end{align*}
where $D:=\{(a_y)_{y\in\cX}\in\R^d:a_y\in[0,c_1/2] \ \text{for all} \ y\neq x\}$.
This implies equicontinuity of $(v^{(K)})_{K\geq K_0}$. We turn to  show the equicontinuity of $(\mathfrak{a}^{(K)})_{K\geq K_0}$. 
As in the proof in Proposition \ref{prop:finite-player-one-step}, by Lemma \ref{lem:convex-conjugates},
\begin{align*}
|\alpha^{(K)}(s^{(K)}_k,x,z)&\,-\alpha^{(K)}(s^{(K)}_{k+1},x,z)|_1\\
&\leq \frac{h^{(K)}}{2\gamma}\sum_{y\in S(x)}\E|v^{(K)}(s^{(K)}_{k+1},y,Z'(x,z,\alpha^{(K)}(s^{(K)}_k)))- v^{(K)}(s^{(K)}_{k+2},y,Z'(x,z,\alpha^{(K)}(s^{(K)}_{k+1})))|\\
&\leq \frac{h^{(K)}}{2\gamma}\sum_{y\in S(x)}\E|v^{(K)}(s^{(K)}_{k+1},y,Z'(x,z,\alpha^{(K)}(s^{(K)}_k)))- v^{(K)}(s^{(K)}_{k+1},y,Z'(x,z,\alpha^{(K)}(s^{(K)}_{k+1})))|\\
&+\,\frac{h^{(K)}}{2\gamma}\sum_{y\in S(x)}\E|v^{(K)}(s^{(K)}_{k+1},y,Z'(x,z,\alpha^{(K)}(s^{(K)}_{k+1})))- v^{(K)}(s^{(K)}_{k+2},y,Z'(x,z,\alpha^{(K)}(s^{(K)}_{k+1})))|\\
&\leq \frac{m L^{(K)}_{k+1} h^{(K)}}{2\gamma} \max_{(x,z)\in\sX_N}  |\alpha^{(K)}_{-x}(s^{(K)}_k,x,z)-\alpha^{(K)}_{-x}(s^{(K)}_{k+1},x,z)|_1 + \frac{c_3m}{2\gamma}(h^{(K)})^2,
\end{align*}
where $L^{(K)}_k:=\max_{x\in\cX}\,\Lip_1(v^{(K)}(s^{(K)}_k,x,\cdot))$.
Taking the maximum over $(x,z)\in\sX_N$, 
$$
\bigl(2\gamma- mL^{(K)}_{k+1}h^{(K)} \bigr)\max_{(x,z)\in\sX_N}  |\alpha^{(K)}(s^{(K)}_k,x,z)-\alpha^{(K)}(s^{(K)}_{k+1},x,z)|_1\leq  c_3m(h^{(K)})^2.
$$
Since $L^{(K)}_k$ is bounded uniformly in $(k,K)$, we see that indeed $(\alpha^{(K)}/h^{(K)})_{K\geq K_0}$ is equicontinuous. By definition of $\mathfrak{a}^{(K)}$, this shows equicontinuity of  $(\mathfrak{a}^{(K)}/h^{(K)})_{K\geq K_0}$.\\
 
\emph{Step 2.} By the Arzel\`{a}-Ascoli theorem, there exist continuous $(\tilde v^{(\infty)}, \tilde \alpha^{(\infty)})$ such that, along a subsequence, $(v^{(K)}, \mathfrak{a}^{(K)}/h_K)\to (\tilde v^{(\infty)}, \tilde \alpha^{(\infty)})$ uniformly on $[0,T]$ as $K\to\infty$. We replace the original sequence with this subsequence. \\

\emph{Step 3.} For $(x,z)\in\sX_N$, define 
\begin{align*}
\varphi^{(K)}_k(x,z)&:= H(x,z,E(x,z,v^{(K)}(s^{(K)}_{k+1}),\alpha^{(K)}(s^{(K)}_k)),\mathfrak{a}^{(K)}(s^{(K)}_k,x,z)) -v^{(K)}(s^{(K)}_{k+1},x,z),\quad 0\leq k<K\\
\varphi^{(\infty)}(s,x,z) &:=\cH(x,z,\Delta_x\tilde v^{(\infty)}(s,\cdot,z),\tilde \alpha^{(\infty)}(s,x,z))+\sL^{\tilde \alpha^{(\infty)}(s)}\tilde v^{(\infty)}(s)(x,z),\quad s\in[0,T]
\end{align*}
where $\sL^{\tilde\alpha^{(\infty)}}$ is as in \eqref{eq:generator} with $\beta=\tilde\alpha^{(\infty)}$.
We claim that
\be\label{eq:proof-cts-time-limit-1}
\lim_{K\to\infty}\ \max_{0\leq k<K} \ \Bigl|\frac{1}{h^{(K)}}\varphi^{(K)}_k(x,z)-\varphi^{(\infty)}(s^{(K)}_k,x,z)\Bigr| =0.
\ee
Indeed, write 
$
\varphi^{(K)}_k(x,z) = \ell(x,z,\mathfrak{a}^{(K)}(s^{(K)}_k,x,z)/h^{(K)})h^{(K)} + \mathrm{(I)}_k + \mathrm{(II)}_k,
$
where
\begin{align*}
\mathrm{(I)}_k &:=\sum_{y\in\cX} \mathfrak{a}^{(K)}_y(s^{(K)}_k,x,z)\, \E[v^{(K)}(s^{(K)}_{k+1},y,Z'(x,z,\alpha^{(K)}(s^{(K)}_k)))-v^{(K)}(s^{(K)}_{k+1},x,Z'(x,z, \alpha^{(K)}(s^{(K)}_k)))], \\
\mathrm{(II)}_k &:= \E[v^{(K)}(s^{(K)}_{k+1},x,Z'(x,z,\alpha^{(K)}(s^{(K)}_k)))]-v^{(K)}(s^{(K)}_{k+1},x,z).
\end{align*}
Using the fact that $\lim_{K\to\infty}Z'(x,z,\alpha^{(K)}(s^{(K)}_k)) = z$ in probability by Lemma \ref{lem:cts-time-limit} and the uniform convergence of $(v^{(K)},\mathfrak{a}^{(K)}/h^{(K)})$ to $(\tilde v^{(\infty)},\tilde \alpha^{(\infty)})$, 
$$
\lim_{K\to\infty} \ \max_{0\leq k< K} \ \Bigl|\frac{1}{h^{(K)}}\mathrm{(I)}_k -\sum_{y\in\cX} \tilde \alpha^{(\infty)}_y(s^{(K)}_k,x,z)\, (\tilde v^{(\infty)}(s^{(K)}_k,y,z)-\tilde v^{(\infty)}(s^{(K)}_k,x,z)) \Bigr| = 0.
$$
For the second term, we write 
$$
\mathrm{(II)}_k = \sum_{z'\in\Sigma^{d-1}_N} \P[Z'(x,z,\alpha^{(K)}(s^{(K)}_k))=z'] \, \big(v^{(K)}(s^{(K)}_{k+1},x,z')-v^{(K)}(s^{(K)}_{k+1},x,z)\big).
$$
We then use Lemma \ref{lem:cts-time-limit} and the uniform convergence of $(v^{(K)},\mathfrak{a}^{(K)}/h^{(K)})$ to $(\tilde v^{(\infty)},\tilde \alpha^{(\infty)})$ to deduce that
$$
\lim_{K\to\infty}\ \max_{0\leq k<K}\ \Bigl|\frac{1}{h^{(K)}}\mathrm{(II)}_k -\sL^{\tilde \alpha^{(\infty)}(s_k^{(K)})} \tilde v^{(\infty)}(s^{(K)}_k)(x,z)\Bigr| =0.
$$
This establishes \eqref{eq:proof-cts-time-limit-1}.\\

\emph{Step 4.} Note that, for every $0\leq k < K$, we have
\be\label{eq:proof-cts-time-limit-2}
-\big(v^{(K)}(s^{(K)}_{k+1},x,z)-v^{(K)}(s^{(K)}_k,x,z)\big) = \varphi^{(K)}_k(x,z),\quad (x,z)\in\sX_N.
\ee
Let $t\in(0,T)$, choose a sequence $t^{(K)}\in\{0,T/K,\ldots,T\}$ converging to $t$ and let $T^{(K)}=t^{(K)} K$. Summing over $0\leq k \leq T^{(K)}-1$ in \eqref{eq:proof-cts-time-limit-2}, we obtain $-(v^{(K)}(t^{(K)},x,z)-v^{(K)}(0,x,z))=\sum_{k=0}^{T^{(K)}-1} \varphi^{(K)}_k(x,z)$. Now,
\begin{align*}
\Bigl|\sum_{k=0}^{T^{(K)}-1} \varphi^{(K)}_k(x,z) - \int_0^t \varphi^{(\infty)}(s,x,z)\,\d s \Bigr| &\leq h^{(K)} \sum_{k=0}^{T^{(K)}-1} |\varphi^{(K)}_k(x,z)/h^{(K)}-\varphi^{(\infty)}(s^{(K)}_k,x,z)| \\
&+ \Bigl|\sum_{k=0}^{T^{(K)}-1} \varphi^{(\infty)}(s^{(K)}_k,x,z)h^{(K)}-\int_0^t \varphi^{(\infty)}(s,x,z)\,\d s \Bigr|.
\end{align*}
The first term converges to zero by Step 3.\ while the second term converges to zero by classical arguments. This proves that $\tilde v^{(\infty)}(t,x,z)-\tilde v^{(\infty)}(0,x,z)=-\int_0^t\varphi^{(\infty)}(s,x,z)\,\d s$. Hence, $\tilde v^{(\infty)}(\cdot,x,z)$ is continuously differentiable on $[0,T]$ with derivative $-\varphi^{(\infty)}(\cdot,x,z)$. 

\emph{Step 5.} We claim that
$$
(\tilde \alpha^{(\infty)}(t,x,z))_{y\neq x}=\underset{a\in\cR(\sigma^2,x)}{\mathrm{arg\,min}}\ \cH(x,z,\Delta_x\tilde v^{(\infty)}(t,\cdot,z),a),\quad (t,x,z)\in[0,T]\times\sX_N.
$$
Indeed, for every $0\leq k <K$, $(x,z)\in\sX_N$ and $a\in A(h^{(K)},x)$, we have
\begin{align*}
H(x,z,E(x,z,v^{(K)}(t^{(K)}+h^{(K)}),&\,\alpha^{(K)}(t^{(K)})),\mathfrak{a}^{(K)}(t^{(K)},x,z))  \\
&\leq H(x,z,E(x,z,v^{(K)}(t^{(K)}+h^{(K)}),\alpha^{(K)}(t^{(K)})),a).
\end{align*}
Adding $\E[v(t^{(K)}+h^{(K)},x,Z'(x,z,\alpha^{(K)}(t^{(K)})))]$ to both sides, dividing by $h^{(K)}$ and taking the limit $K\to\infty$ yields
$$
\cH(x,z,\Delta_x\tilde v^{(\infty)}(t,\cdot,z),\tilde \alpha^{(\infty)}(t,x,z))  \leq \cH(x,z,\Delta_x\tilde v^{(\infty)}(t,\cdot,z), a),\qquad (x,z)\in\sX_N,\, a\in\cR(\sigma^2,x).
$$

\emph{Step 6.} Steps 1--5 show that, given any subsequence $(K_n)_{n\geq1}$, we can find a further subsequence $(K_{n_p})_{p\geq1}$ such that $\{v^{(K_{n_p})}(\cdot,x,z):(x,z)\in\sX_N\}$ and $\{\mathfrak{a}^{(K_{n_p})}_y(\cdot,x,z) / h^{(K_{n_p})}:(x,z)\in\sX_N\}$ converge uniformly to some continuous functions $\tilde v^{(\infty)}= \{\tilde v^{(\infty)}(\cdot,x,z):(x,z)\in\sX_N\}$ and $\tilde \alpha^{(\infty)}=\{\tilde \alpha^{(\infty)}(\cdot,x,z):(x,z)\in\sX_N\}$, respectively, such that $(\tilde v^{(\infty)},\tilde a^{(\infty)})$ satisfy the continuous-time NLL equation \eqref{eq:cts-time-finite-player-NLL}. Since the latter has a unique classical solution $(v_{\mathrm{cts}},\alpha_{\mathrm{cts}})$, this proves that the entire sequence $(v^{(K)},\mathfrak{a}^{(K)}/h^{(K)})$ converges uniformly to $(v_{\mathrm{cts}},\alpha_{\mathrm{cts}})$.
\end{proof}

\appendix
\section{Appendix}

\subsection{Dynamic programming for Markov chains} \label{app:dynamic-progamming}

For $h>0$ and $K\geq1$, let $\bar \cT:=\{0,h,\ldots,Kh\}$ and $\cT:=\bar\cT\setminus\{Kh\}$.
Consider a state process $(X_t,Y_t)_{t\in\bar\cT}$ taking values in a discrete set $\cX\times\cY$, where $|\cX|=d_1<\infty$ and $|\cY|=d_2<\infty$. Fix a map $\beta:\cT\times\cX\times\cY\to\Sigma^{d_2-1}$. For any control $\alpha:\cT\times\cX\times\cY\to\Sigma^{d_1-1}$, let $\P^\alpha$ denote the probability measure under which $(X,Y)$ is a Markov chain with $\P^\alpha$-transition probabilities
$$
\P^\alpha[(X_{t+h},Y_{t+h})=(x',y')\,|\,(X_t,Y_t)=(x,y)] = \alpha(t,x,y)(x')\beta(t,x,y)(y').
$$
Let $\ell:\cT\times\cX\times\cY\times\Sigma^{d_1-1}\to\R$ and $g:\cX\times\cY\to\R$ be given (cost) functions. Introduce the value function
$$
v(t,x,y)=\inf_{\alpha:\,\cT\times\cX\times\cY\to\Sigma^{d_1-1}} \E^\alpha\Bigl[\sum_{t\leq s < T} \ell(s,X_s,Y_s,\alpha(s,X_s,Y_s))+g(X_s,Y_s)\,|\,(X_t,Y_t)=(x,y)\Bigr].
$$
Clearly, for the problem at time $t$, the values of $\alpha$ at previous times $s<t$ are irrelevant.

Define the Hamiltonian
$$
\sH(t,x,y,\varphi,a) = \ell(t,x,y,a)+\sum_{(x',y')\in\cX\times\cY}a(x')\beta(t,x,y)(y')\varphi(x',y').
$$
The following result is standard, see for example \cite{FS}.

\begin{Lem}\label{lem:bellman}
Let $w:\bar\cT\times\cX\times\cY\to\R$ be given and assume that the cost functions $\ell,g$ are continuous. Then $w=v$ if and only if $w$ solves the following dynamic programming equation, for $(t,x,y)\in\cT\times\cX\times\cY$,
$$
\begin{cases}
w(t,x,y) = \underset{a\in\Sigma^{d_1-1}}{\inf}\ \sH(t,x,y,w(t+h),a), \\
w(T,x,y) = g(x,y).
\end{cases}
$$
Moreover, a control $\alpha:\cT\times\cX\times\cY\to\Sigma^{d_1-1}$ is optimal for any initial point $(t,x,y)$ if and only if 
$$
\alpha(t,x,y)\in\underset{a\in\Sigma^{d_1-1}}{\mathrm{arg\,min}} \ \sH(t,x,y,v(t+h),a),
$$
for all $(t,x,y)\in\cT\times\cX\times\cY$. 
\end{Lem}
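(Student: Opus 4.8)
The plan is to prove both equivalences by backward induction on the time variable, the engine being the one-step dynamic programming principle. A preliminary observation I would record first: for fixed $(t,x,y)$ and $\varphi$, the map $a\mapsto\sH(t,x,y,\varphi,a)$ is continuous (continuity of $\ell$, and the second term is linear in $a$), and $\Sigma^{d_1-1}$ is compact, so the infimum in the dynamic programming equation is attained; this is what makes the $\mathrm{arg\,min}$ in the ``moreover'' part nonempty.

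First I would show that the value function $v$ itself satisfies the dynamic programming equation. The terminal identity $v(T,x,y)=g(x,y)$ is immediate since the running-cost sum is empty. For $t<T$ one proves the two inequalities separately. For ``$\ge$'': fix any control $\alpha$; conditioning on the state at time $t+h$ and using that, for the subproblem started at $t+h$, the conditional remaining cost is bounded below by $v(t+h,X_{t+h},Y_{t+h})$ (this is the definition of $v$ as an infimum, together with the Markov property and the fact that the values of $\alpha$ at times $s<t+h$ are irrelevant for that subproblem) yields $\E^\alpha[\cdots\mid X_t=x,Y_t=y]\ge\sH(t,x,y,v(t+h),\alpha(t,x,y))\ge\inf_a\sH(t,x,y,v(t+h),a)$; taking the infimum over $\alpha$ gives the bound. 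For ``$\le$'': given $\varepsilon>0$ and $a\in\Sigma^{d_1-1}$, pick for each next state $(x',y')$ a control that is $\varepsilon$-optimal for the subproblem started at $(t+h,x',y')$ and glue these with the first action $a$ prescribed at $(t,x,y)$; this is legitimate because $\cX,\cY$ are finite (only finitely many next states), and because modifying $\alpha$ at state $(t,x,y)$ and at later times does not affect any earlier subproblem. Combined with the previous paragraph, this also settles ``$w=v$ iff $w$ solves the DPP'' in one direction; for the converse, if $w$ solves the dynamic programming equation then $w(T,\cdot)=g=v(T,\cdot)$, and a downward induction using that $\inf_a\sH(t,x,y,\cdot,a)$ depends on $w(t+h,\cdot)$ only gives $w(t,\cdot)=v(t,\cdot)$ for all $t\in\bar\cT$.

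For the characterization of optimal controls, set $\cF_s:=\sigma((X_r,Y_r):r\le s)$. If $\alpha(t,x,y)\in\mathrm{arg\,min}_a\,\sH(t,x,y,v(t+h),a)$ for all $(t,x,y)$, I would run a verification argument: under $\P^\alpha$ with any initial datum $(t,x,y)$, the process $M_s:=\sum_{t\le r<s}\ell(r,X_r,Y_r,\alpha(r,X_r,Y_r))+v(s,X_s,Y_s)$, $s\in\cT(t,T)$, is a $\P^\alpha$-martingale, since its one-step conditional increment equals $\sH(s,X_s,Y_s,v(s+h),\alpha(s,X_s,Y_s))-v(s,X_s,Y_s)$, which is $0$ by the dynamic programming equation and the $\mathrm{arg\,min}$ property. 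Evaluating $\E^\alpha[M_T\mid X_t=x,Y_t=y]=M_t=v(t,x,y)$ and using $M_T=\sum_{t\le r<T}\ell(r,\cdots)+g(X_T,Y_T)$ identifies the cost of $\alpha$ from $(t,x,y)$ with $v(t,x,y)$, so $\alpha$ is optimal. Conversely, if $\alpha$ is optimal from every initial datum but $\alpha(t_0,x_0,y_0)\notin\mathrm{arg\,min}$ for some $(t_0,x_0,y_0)$, then conditioning the cost of $\alpha$ started at $(t_0,x_0,y_0)$ on the first step and bounding the remaining cost below by $v(t_0+h,\cdot)$ gives a cost $\ge\sH(t_0,x_0,y_0,v(t_0+h),\alpha(t_0,x_0,y_0))>v(t_0,x_0,y_0)$, contradicting optimality at that initial datum.

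Since the state spaces are finite and the horizon is finite, with continuous costs on a compact action simplex, there are no measurability or integrability subtleties, and the martingale $M_s$ is literally a bounded finite sequence. The only mildly delicate point — and the step I would be most careful with — is the gluing/selection argument in the ``$\le$'' direction of the dynamic programming principle, i.e.\ justifying that the infimum over the whole feedback control factorizes into the choice of the first action and the choice of the tail; everything else is routine backward induction.
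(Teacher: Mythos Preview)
Your proposal is correct and follows the standard backward-induction/verification route for discrete-time dynamic programming on finite state and action spaces. Note, however, that the paper does not actually supply its own proof of this lemma: it simply states that the result is standard and cites a reference (\cite{FS}). So there is nothing to compare against beyond observing that your argument is the textbook one such a citation points to; the gluing step you flag is indeed the only place requiring a moment's care, and it is unproblematic here because the state space is finite.
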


\begin{Rmk}
{\rm There are no issues with measurability since $\cT\times\cX\times\cY$ is a discrete set. Moreover, when the dynamics of $Y$ are deterministic, we may allow $\cY$ to be any set without changing the validity of Lemma \ref{lem:bellman}.
}
\end{Rmk}

\subsection{Proof of Lemma \ref{lem:convex-conjugates}} \label{app:conjugate-correspondence}

We follow the notation of \cite{FOMO}. Given a function $f:\R^d\to(-\infty,\infty]$, the domain of $f$ and of its subdifferential $\partial f$ are defined by
$$
\dom( f) := \{x\in\R^d: f(x)<\infty\}\quad \text{and}\quad \dom(\partial f) := \{x\in\R^d:\partial f(x)\neq\emptyset\}.
$$
Further, the indicator function of a set $C\subset\R^d$ is defined by 
$$
\delta_C:\R^d\to[0,\infty],\quad \delta_C(x):=
\begin{cases}
0 &x\in C,\\
\infty, &x\notin C.
\end{cases}
$$
Finally, the normal cone of a set $C$ is denoted by
$$
N_C(x) := 
\begin{cases}
\{n\in\R^d : n\cdot(c-x)\leq 0,\ \forall c\in C\}, &x\in C,\\
\emptyset, &x\notin C.
\end{cases}
$$

\emph{Step 1.} We first show that $\eta:=f+\delta_C$ is a  closed, proper and strongly convex function. Clearly, $\eta$ is proper. Closedness of $\eta$ follows from the closedness of $f$, for example using closedness of the sublevel sets \cite[Theorem 2.6]{FOMO}. Next, using $\partial\delta_C=N_C$, see \cite[Example 3.5]{FOMO}, and the sum rule of subdifferential calculus, see \cite[Theorem 3.40]{FOMO}, we have
$$
\partial \eta(x) = \partial f(x)+ N_C(x),\quad x\in\R^d,
$$ 
where the right side is understood in the sense of Minkowski. Hence $\dom(\partial\eta)=\dom(\partial f)\cap \dom(\partial\delta_C)=\dom(\partial f)\cap C$.
Strong convexity of $\eta$ means that, for $(x,x')\in\dom(\partial \eta)\times\dom(\eta)$ and $\phi\in\partial \eta(x)$,
\be\label{eq:app-convex-i}
\eta(x')\geq \eta(x)+\phi\cdot(x'-x)+\frac{\gamma}{2} |x'-x|_2^2,
\ee
see \cite[Theorem 5.24]{FOMO}. Let such $(x,x',\phi)$ be given. By the above observation, there exists a $g\in\partial f(x)$ and $n\in N_C(x)$ with $\phi=g+n$. Since $x,x'\in C$, by
\eqref{eq:lem-convex-conjugate},
\begin{align*}
\eta(x')=f(x')&\geq f(x)+ g\cdot(x'-x)+ \frac{\gamma}{2} |x'-x|_p|x'-x|_q\\
&\geq f(x)+ g\cdot(x'-x)+\frac{\gamma}{2} |x'-x|_2^2=\eta(x)+ g\cdot(x'-x)+\frac{\gamma}{2} |x'-x|_2^2
\end{align*}
by H\"{o}lder's inequality.  Using $n\in N_C(x)$ and $x'\in C$, we have $n\cdot(x'-x)\leq 0$. Hence,
$$
\eta(x')\geq  \eta(x)+g\cdot(x'-x)+n\cdot(x'-x)+ \frac{\gamma}{2} |x'-x|_2^2 = \eta(x)+ \phi\cdot(x'-x)+ \frac{\gamma}{2} |x'-x|_2^2 ,
$$
establishing \eqref{eq:app-convex-i}.

\emph{Step 2.} Let $y_1,y_2\in\R^d$ be given and set $v_i:=\psi(y_i)$ for $i=1,2$. We claim that there exist $g_i\in\partial f(v_i)$ and $n_i\in N_C(v_i)$ with  $y_i=g_i+n_i$ for $i=1,2$.
Indeed, note that we may write
$$
\psi(y)=\underset{x\in C}{\mathrm{arg\,max}} \{y\cdot x-f(x)\}=\underset{x\in \R^d}{\mathrm{arg\,max}} \{y\cdot x-\eta(x)\},\quad y\in\R^d.
$$
By Step 1, $\psi$ is well-defined as a single valued function, see \cite[Theorem 5.25 (a)]{FOMO}. By the conjugate subgradient theorem \cite[Theorem 4.20]{FOMO}, $v_i=\nabla \eta^*(y_i)$ for $i=1,2$.
Then, by \cite[Corollary 4.21]{FOMO} and the sum rule again,
$$
y_i\in\partial \eta(v_i) = \partial f(v_i)+ N_C(v_i),\quad i=1,2.
$$
The claim follows.

\emph{Step 3.} Let $r=p,q$ and let $r'$ be its conjugate.  We  claim that
\be \label{eq:app-convex-ii}
|v_1-v_2|_r\leq \frac{1}{\gamma}|y_1-y_2|_r.
\ee
If $v_1=v_2$, then the claim follows, so that we may assume $w:=v_2-v_1\neq0$.
By \eqref{eq:lem-convex-conjugate},
$$
f(v_2) \geq f(v_1)+ g_1\cdot w+ \frac{\gamma}{2} |w|_r |w|_{r'}\quad \text{and}\quad 
f(v_1) \geq f(v_2) - g_2\cdot w + \frac{\gamma}{2} |w|_r |w|_{r'}.
$$
Since $v_1,v_2\in C$, we have that 
$$
n_1\cdot w= n_1\cdot(v_2-v_1)\leq 0,\quad 
- n_2\cdot w = n_2\cdot(v_1-v_2)\leq 0.
$$
Hence, 
$$
f(v_2) \geq f(v_1)+ y_1\cdot w + \frac{\gamma}{2} |w|_r |w|_{r'}\quad \text{and}\quad 
f(v_1) \geq f(v_2) - y_2\cdot w + \frac{\gamma}{2} |w|_r |w|_{r'}.
$$
Adding these inequalities gives
$$
(y_2-y_1)\cdot w\geq \gamma |w|_r |w|_{r'}.
$$
By H\"{o}lder's inequality,
$$
\gamma |w|_r |w|_{r'}\leq |y_2-y_1|_r|w|_{r'},
$$
which, using $w\neq 0$, implies \eqref{eq:app-convex-ii}. \hfill $\Box$

{\small
\bibliographystyle{abbrvnat}
\bibliography{bibliography}
}
\end{document}